% Submitted to JACO on 12.08.2019
% Rejected on 24.01.2020
% Submitted to JPAA on 30.01.2020
\documentclass[12pt,reqno]{amsart}
\usepackage{hyperref}
\usepackage{amsmath,amssymb}
\usepackage[english]{babel}
\usepackage{caption}
\usepackage{amssymb,amsmath,euscript,enumerate,tikz}
\usepackage[margin=1in]{geometry}
\usepackage{graphicx}
\usepackage{float}
\usepackage{pgf,tikz}
\usepackage{mathrsfs}
\usepackage{upgreek}

\newcommand \reg{\operatorname{reg}}

\newcommand \Tor{\operatorname{Tor}}

\newcommand \pd{\operatorname{pd}}

\newcommand \ini{\operatorname{in}}

\newcommand \h{\operatorname{ht}}

\newcommand \depth{\operatorname{depth}}
\newcommand \gr{\operatorname{gr}}

\newcommand \R{\mathcal{R}}

\newcommand \K{\mathbb{K}}

\newcommand{\mS}{\mathcal{S}}
\newcommand{\rank}{\operatorname{rank}}
\newcommand{\KK}{\mathbb{K}}
\newcommand{\sk}{{\ensuremath{\sf k }}}

\newtheorem{theorem}{Theorem}[section]
\newtheorem{definition}[theorem]{Definition}
\newtheorem{lemma}[theorem]{Lemma}
\newtheorem{proposition}[theorem]{Proposition}

\newtheorem{question}[theorem]{Question}

\newtheorem{corollary}[theorem]{Corollary}

\newtheorem{con}[theorem]{Conjecture}

\newtheorem{remark}[theorem]{Remark}

\begin{document}
	\title[Almost complete
	intersection binomial edge ideals and their Rees algebras]{Almost complete
		intersection binomial edge ideals and their Rees algebras}
	\author[A. V. Jayanthan]{A. V. Jayanthan}
	\email{jayanav@iitm.ac.in}
	\address{Department of Mathematics, Indian Institute of Technology
		Madras, Chennai, INDIA - 600036}
	
	\author[Arvind Kumar]{Arvind Kumar$^1$}
	\email{arvkumar11@gmail.com}
	\address{Department of Mathematics, Indian Institute of Technology,
	Hauz Khas, New Delhi, INDIA - 110016}
	\thanks{$^1$ The author is funded by National Board for Higher
		Mathematics, India}
	\author[Rajib Sarkar]{Rajib Sarkar$^2$}
	\email{rajib.sarkar63@gmail.com}
	\thanks{$^2$ The author is funded by University Grants Commission,
		India}
	\address{Department of Mathematics, Indian Institute of Technology
		Madras, Chennai, INDIA - 600036}

\begin{abstract}
Let $G$ be a  simple graph on $n$ vertices and $J_G$ denote the
binomial edge ideal of $G$ in the polynomial ring $S = \K[x_1,
\ldots, x_n, y_1, \ldots, y_n].$ In this article, we compute
the second graded Betti numbers of $J_G$, and we obtain a
minimal presentation of it when  $G$ is a tree or a unicyclic
graph. We classify all graphs whose binomial edge
ideals are almost complete intersection, prove that they are generated
by a $d$-sequence and that the Rees algebra of their binomial edge
ideal is Cohen-Macaulay.  We also obtain an explicit description
of the defining ideal of the Rees algebra of those binomial edge
ideals.
\end{abstract}
\keywords{Binomial edge ideal, Syzygy, Rees Algebra, Betti number}
\thanks{AMS Subject Classification (2010): 13D02,13C13, 05E40}
\maketitle
\section{Introduction}
Let $G$ be a  simple graph with vertex set  $V(G) =[n]:= \{1, \ldots, n\}$ and edge set $E(G)$.
Villarreal in \cite{vill_cohen} defined the edge ideal of $G$ as $I(G)
= ( x_ix_j ~ : ~ \{i, j\} \in E(G)) \subset \K[x_1, 
\ldots, x_{n}]$. Herzog et al. in \cite{HH1} and independently 
Ohtani in \cite{oh} defined the binomial edge ideal of $G$ as $J_G = (
x_i y_j - x_j y_i ~ : i < j \text{ and } \{i,j\} \in E(G)) \subset S=
	\K[x_1, \ldots, x_{n}, y_1, \ldots, y_{n}]$. In the recent past,
	researchers have been trying to understand the connection between
	combinatorial invariants of $G$ and algebraic invariants of $I(G)$
	and $J_G$. While this relation between $G$ and $I(G)$ is well
	explored (see for example \cite{BBH17} and the references therein),
	the connection between the properties of $G$ and $J_G$ are not very
	well understood, see \cite{HH1,JACM, KM3,AR1, MM, KM12} for a partial
	list. It is known that the Rees algebra of an ideal $I$,
	$\mathcal{R}(I) := \oplus_{n\geq0}I^nt^n$, encodes a lot of
	asymptotic properties of $I$. In the case of monomial edge
	ideals, properties of their Rees algebra have been explored by several
	researchers (see \cite{Vill95} and the citations to this paper).
	In \cite{Vill95}, Villarreal described the generators of the defining
	ideal of the Rees algebra of a graph. As a consequence of this, he
	proved that $I(G)$ is of \textit{linear type}, i.e., the Rees algebra is
	isomorphic to the Symmetric algebra, if and only if $G$ is either a
	tree or an odd unicyclic graph.
	However, nothing much is known about the Rees algebra of binomial edge
	ideals. In this article, we initiate such a study. 
	
	An ideal $I$ in a standard graded polynomial ring is said to be \textit{complete
		intersection} if $\mu(I) = \h(I)$, where $\mu(I)$ denotes the
	cardinality
	of a minimal homogeneous generating set of $I$.  It is said to be 
	\textit{almost complete intersection} if $\mu(I) = \h(I) + 1$ and
	$I_{\mathfrak{p}}$ is complete intersection for all minimal primes
	$\mathfrak{p}$ of $I$.
	It is known that for a connected graph $G$, $J_G$ is complete
	intersection if and only if $G$ is a path, \cite{her1}.  
	Rinaldo studied the Cohen-Macaulayness of certain subclasses of
	almost complete intersection binomial edge ideals, \cite{Rin13}.
	In this article, we characterize graphs whose binomial
	edge ideals are almost complete intersections. We prove that these
	are either a subclass of trees or a subclass of unicyclic graphs
	(Theorems \ref{tree-aci}, \ref{graph-aci}). 
	
	Understanding the depth of the Rees algebra and the associated graded ring
	of ideals has been a long studied problem in commutative algebra. If
	an ideal is generated by a regular sequence in a Cohen-Macaulay local
	ring, then the corresponding associated graded ring and the Rees
	algebra are known to be Cohen-Macaulay. In general, computing the
	depth of these blowup algebras is a non-trivial problem. If an ideal
	is almost complete intersection, then the
	Cohen-Macaulayness of the Rees algebra and the associated graded ring
	are closely related by a result of Herrmann, Ribbe and Zarzuela (see
	Theorem \ref{aci-cmrees}).  We prove that the associated graded ring,
	and hence, the Rees algebra of almost complete intersection binomial
	edge ideals are Cohen-Macaulay, (Theorem \ref{uni-cmrees}).  
	
%	The Cohen-Macaulayness of the associated graded
%	ring is
%	proved using the above mentioned result of Herrmann et al, by proving
%	that $S/J_G$ has almost maximal depth. For the computation of
%	depth of $S/J_G$, we use a recent result of Conca and Varbaro,
%	which says that for a homogeneous ideal $I$ in a polynomial ring $S$,
%	if its initial ideal with respect to a term order is square-free, then
%	$\depth(S/I) = \depth(S/\ini(I))$, \cite{CV18}. With respect to the
%	degree lexicographical order, the initial ideal of $J_G$ is known to
%	be square-free for any graph $G$, \cite{HH1}. Therefore, instead of
%	computing $\depth(S/J_G)$, we compute $\depth(S/\ini(J_G))$ (Lemma
%	\ref{initial-ideal}).
	
	Another problem of interest for commutative algebraists is to compute
	the defining ideal of the Rees algebra. Describing the defining ideal
	not only gives more insight into the structure of the Rees algebra,
	but it also
	helps in understanding other homological properties and invariants
	associated with the Rees algebra. For example, the maximal degree occurring in a minimal generating set of the defining ideal also serves as a lower bound
	for one of the most important homological and computational invariant,
	the Castelnuovo-Mumford regularity. In general, it is quite a hard
	task to describe the defining ideals of Rees algebras. 
	Huneke proved that the defining ideal of the
	Rees algebra of an ideal generated by a $d$-sequence has a linear
generating set, \cite{Hu80} (see \cite{KNR91} for a simple proof). We
show that homogeneous almost complete intersection ideals in
polynomial rings over an infinite field are generated by a
$d$-sequence, Proposition \ref{d-seq-lemma}. As a consequence, we
derive that if  $J_G$ is an
	almost complete intersection ideal, then $J_G$ is generated by a
	$d$-sequence, (Corollary \ref{d-seq-unicyc-tree}). We also prove that
	being almost complete intersection is not a necessary condition for
	the binomial edge ideal to be generated by a
	$d$-sequence, by showing
	that $J_{K_{1,n}}$ is generated by a $d$-sequence (Proposition
	\ref{d-seq-star}). We then describe the defining ideals of
	the Rees algebras of almost complete intersection binomial edge
	ideals, (Corollary \ref{cor-def-eqn}, Remark \ref{rem-def-eqn}).
	
	It is known that for an ideal $I$ of linear type,
	the generators of the defining ideal of the Rees algebra can be
	obtained from the matrix of a minimal presentation of $I$ \cite{HS}.
	For describing the generating set of the defining ideal of Rees algebras,
	we compute a minimal presentation of ideals. In this process, we
	compute the second graded Betti numbers and generators of the second
	syzygy of $S/J_G$ when $G$ is a tree or a unicyclic graph, (Theorems
	\ref{betti-tree} - \ref{syzygy-unicyclic}). Here we do
	not assume that the binomial edge ideal is almost complete
	intersection. 
	
%	The main idea in the computation of the syzygies
%	is the observation that we can get a minimal free presentation of $J_G$
%	from the mapping cone of two minimal free resolutions of certain
%	associated binomial edge ideals. We use the same technique to obtain a
%	minimal free resolution of $J_{K_{1,n}}$.
	
	The article is organized as follows. The second section contains all
	the necessary definitions and notation required in the rest of the
	article. In Section 3, we describe the second graded
	Betti numbers and first syzygy of the binomial edge ideal of trees and
	unicyclic graphs. We study the Rees algebra of almost complete
	intersection binomial edge ideals in Section 4.

	\vskip 2mm \noindent
	\textbf{Acknowledgement:} We would like to thank the anonymous
	referee for asking some pertinent questions which allowed to us
	improve some of the results in the initial draft.

	\section{Preliminaries}
	
	Let $G$  be a  simple graph with the vertex set $[n]$ and edge set
	$E(G)$. A graph on $[n]$ is said to be a \textit{complete graph}, if
	$\{i,j\} \in E(G)$ for all $1 \leq i < j \leq n$. The complete graph on
	$[n]$ is denoted by $K_n$. For $A \subseteq V(G)$, $G[A]$ denotes the
	\textit{induced subgraph} of $G$ on the vertex set $A$, that is, for
	$i, j \in A$, $\{i,j\} \in E(G[A])$ if and only if $ \{i,j\} \in
	E(G)$.  For a vertex $v$, $G \setminus v$ denotes the  induced
	subgraph of $G$ on the vertex set $V(G) \setminus \{v\}$.  A subset
	$U$ of $V(G)$ is said to be a \textit{clique} if $G[U]$ is a complete
	graph. A vertex $v$ of $G$ is said to be a \textit{simplicial vertex}
	if $v$ is contained in only one maximal clique. For a vertex $v$,
	$N_G(v) = \{u \in V(G) :  \{u,v\} \in E(G)\}$ denotes the
	\textit{neighborhood} of $v$ in $G$ and  $G_v$ is the graph on the vertex set
	$V(G)$ and edge set $E(G_v) =E(G) \cup \{ \{u,w\}: u,w \in N_G(v)\}$.
	The \textit{degree} of a vertex  $v$, denoted by $\deg_G(v)$, is
	$|N_G(v)|$. A vertex $v$ is said to be a \textit{pendant vertex} if
	$\deg_G(v) =1$. Let $c(G)$ denote the number of components
	of $G$.  A vertex $v$ is called a \textit{cut vertex} of $G$ if $c(G)
	< c(G \setminus v)$.  For an edge $e$ in $G$, $G\setminus e$ is the
	graph on the vertex set $V(G)$ and edge set $E(G) \setminus \{e\}$.
	An edge $e$ is called a \textit{cut edge} if $c(G) < c(G \setminus
	e)$.  Let
	$u,v \in V(G)$ be such that $e=\{u,v\} \notin E(G)$, then we denote by
	$G_e$, the graph on vertex set $V(G)$ and edge set $E(G_e) = E(G) \cup
	\{\{x,y\} : x,\; y \in N_G(u) \; or \; x,\; y \in N_G(v) \}$.  A
	\textit{cycle} is a connected graph $G$ with $\deg_G(v) = 2$ for all $v \in V(G)$. A
	graph is said to be a \textit{unicyclic} graph if it contains exactly
	one cycle as a subgraph. A graph is a \textit{tree} if it does not have a
	cycle. The \textit{girth} of a graph $G$ is the length of a shortest
	cycle in $G$. A \textit{complete bipartite} graph on $m+n$ vertices, denoted by
	$K_{m,n}$, is the graph with the  vertex set $V(K_{m,n}) = \{u_1, \ldots,
	u_m\} \cup \{v_1, \ldots, v_n\}$ and edge set $E(K_{m,n}) = \{ \{u_i, v_j\} ~ :
	~ 1 \leq i \leq m, 1 \leq j \leq n \}$. A \textit{claw} is the
	complete bipartite graph $K_{1,3}$. A claw $\{u,v,w,z\}$ with center
	$u$ is the graph with vertices $\{u,v,w,z\}$ and edges $\{\{u,v\},
	\{u,w\}, \{u,z\}\}$. For a graph $G$, let $\mathcal{C}_G$ denote the
	set of all induced claws in $G$.
	
	Let $R = \KK[x_1,\ldots,x_m]$ be a standard graded polynomial ring over a
	field $\KK$ and $M$ be a finitely generated graded  $R$-module. 
	Let
	\[
	0 \longrightarrow \bigoplus_{j \in \mathbb{Z}} R(-j)^{\beta_{p,j}(M)} 
	\longrightarrow \cdots \longrightarrow \bigoplus_{j \in \mathbb{Z}} R(-j)^{\beta_{0,j}(M)} 
	\longrightarrow M\longrightarrow 0,
	\]
	be the minimal graded free resolution of $M$, where
	$R(-j)$ is the free $R$-module of rank $1$ generated in degree $j$.
	The number $\beta_{i,j}(M)$ is  called the 
	$(i,j)$-th {\it graded Betti number} of $M$.  Then,  the exact sequence $$ \bigoplus_{j \in \mathbb{Z}} R(-j)^{\beta_{1,j}(M)} 
	\rightarrow \bigoplus_{j \in \mathbb{Z}} R(-j)^{\beta_{0,j}(M)} 
	\rightarrow M\rightarrow 0$$ is called the {\em minimal presentation} of $M$.

	Let $G$ be a graph on $[n]$. For an edge $e = \{i,j\}\in E(G)$ with $i <
	j$, we define $f_e = f_{i,j} =f_{j,i}:=x_i y_j - x_j y_i$.
	For  $T \subset [n]$, let $\bar{T} = [n]\setminus T$ and $c_T$
	denote the number of  components of $G[\bar{T}]$. Also, let $G_1,\cdots,G_{c_T}$ be the 
	components of $G[\bar{T}]$ and for every $i$, $\tilde{G_i}$ denote the
	complete graph on $V(G_i)$. Let $P_T(G) := (\underset{i\in T} \cup \{x_i,y_i\}, J_{\tilde{G_1}},\cdots, J_{\tilde{G}_{c_T}}).$
	A set $T\subset [n]$ is said to have the \textit{cut point property} if, for every
	$i \in T,$ $i$ is a cut vertex of  graph $G[\bar{T} \cup \{i\}]$.
	
	We recall some results on the binomial edge ideal from \cite{HH1}
	which are used in the subsequent sections.
	\begin{theorem}\label{tech-thm}
	Let $G$ be a graph on $[n]$. Then, we have the following:
	\begin{enumerate}[(a)]
		\item (Corollary 2.2) $J_G$ is a radical ideal.
		\item (Lemma 3.1) For $T \subset [n]$, $P_T(G)$ is a prime ideal and  $\h(P_T(G))=n+|T|-c_T$.
		\item (Theorem 3.2)  $J_G =  \underset{T \subset [n]}\cap
		P_T(G)$.
		\item (Corollary 3.9) For $T \subset [n]$, $P_T(G)$ is a minimal prime
		of $J_G$ if and only if either $T = \emptyset$ or $T$ has the cut point property.
	\end{enumerate}
	\end{theorem}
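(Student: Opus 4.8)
These four statements are the foundational structure results for binomial edge ideals, and the plan is to establish them in the order in which they depend on one another: a Gr\"obner basis computation drives (a), a determinantal identification drives (b), and (a)--(b) together feed the decomposition in (c), from which the minimality criterion (d) can be read off. For (a) I would fix the lexicographic term order induced by $x_1 > \cdots > x_n > y_1 > \cdots > y_n$ and show that the generators $f_{i,j}$, together with the binomials arising from \emph{admissible paths} between pairs of vertices, form a Gr\"obner basis of $J_G$. The key point is that the initial term of $f_{i,j}$ (with $i<j$) is the squarefree monomial $x_i y_j$, and each path-binomial again has a squarefree leading monomial. Hence $\ini(J_G)$ is a squarefree monomial ideal, so it is radical; and since an ideal whose initial ideal is radical is itself radical, $J_G$ is radical. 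The combinatorial bookkeeping showing that the path-binomials suffice to close Buchberger's algorithm is the only delicate part here.

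For (b), observe that $P_T(G)$ is the sum of the linear ideal $(x_i,y_i : i \in T)$ and the ideals $J_{\tilde{G_1}}, \ldots, J_{\tilde{G}_{c_T}}$, where each $J_{\tilde{G_k}}$ is the ideal of $2\times 2$ minors of the generic $2 \times |V(G_k)|$ matrix whose columns are the $(x_\ell, y_\ell)^{t}$ with $\ell \in V(G_k)$. Classical determinantal theory gives that each such minor ideal is prime of height $|V(G_k)| - 1$. Because the $c_T$ complete-graph ideals and the linear ideal involve pairwise disjoint sets of variables, $S/P_T(G)$ is a tensor product over $\K$ of domains and is therefore a domain, so $P_T(G)$ is prime; adding the heights gives $2|T| + \sum_{k=1}^{c_T}(|V(G_k)|-1) = 2|T| + (n - |T| - c_T) = n + |T| - c_T$.

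For (c), the inclusion $J_G \subseteq \bigcap_{T} P_T(G)$ is immediate: for an edge $\{i,j\}$, either some endpoint lies in $T$, whence $f_{i,j} \in (x_i,y_i) + (x_j,y_j)$, or both endpoints lie in a single component $G_k$ of $G[\bar{T}]$, whence $f_{i,j} \in J_{\tilde{G_k}}$. For the reverse inclusion I would invoke (a): since $J_G$ is radical, $J_G = \bigcap_{\mathfrak p \in \Min(J_G)} \mathfrak p$, so it suffices to show every minimal prime equals some $P_T(G)$. Given such a $\mathfrak p$, set $T = \{ i : x_i, y_i \in \mathfrak p\}$; restricting to each connected component of $G[\bar{T}]$ and using that the only minimal prime of the binomial edge ideal of a connected graph containing none of the $x_\ell, y_\ell$ is the generic determinantal prime of (b), one identifies $\mathfrak p$ with $P_T(G)$, which yields $\bigcap_T P_T(G) \subseteq J_G$.

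Finally, for (d), I would compare $P_T(G)$ with $P_{T \setminus \{i\}}(G)$ for $i \in T$, guided by the height formula from (b). Writing $H = G[\bar{T} \cup \{i\}]$, the vertex $i$ is a cut vertex of $H$ precisely when deleting it strictly increases the component count, i.e. $c_{T\setminus\{i\}} < c_T$. A direct comparison of generators shows that when $i$ is \emph{not} a cut vertex the enlarged (or newly isolated) component of $G[\overline{T\setminus\{i\}}]$ contributes only relations already lying in $(x_i,y_i)$ and in the smaller complete-graph ideal, so $P_{T\setminus\{i\}}(G) \subseteq P_T(G)$ and $P_T(G)$ is non-minimal; conversely, when $i$ is a cut vertex the height inequality $\h P_{T\setminus\{i\}}(G) \ge \h P_T(G)$ blocks any such containment. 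Hence a nonempty $T$ gives a minimal prime iff every $i \in T$ is a cut vertex of $G[\bar{T} \cup \{i\}]$, i.e. iff $T$ has the cut point property, while $T=\emptyset$ trivially yields a minimal prime. The main obstacle throughout is the Gr\"obner/admissible-path analysis underlying (a) and the identification of minimal primes in (c); once those combinatorial facts are secured, (b) and (d) reduce to height and containment bookkeeping.
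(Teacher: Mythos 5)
The paper does not actually prove this theorem---it is recalled verbatim from Herzog et al.\ \cite{HH1} with explicit citations (Corollary 2.2, Lemma 3.1, Theorem 3.2, Corollary 3.9)---and your sketch correctly reconstructs the standard arguments of that source: squarefree initial ideal via admissible paths for (a), disjoint-variable sums of determinantal primes with the height count $2|T|+\sum_k(|V(G_k)|-1)=n+|T|-c_T$ for (b), radicality plus identification of every minimal prime with some $P_T(G)$ for (c), and the cut-vertex versus component-count comparison for (d). The one point to tighten is in (b): a tensor product of domains over a field that is not algebraically closed need not be a domain, so primality of $P_T(G)$ should be justified by the geometric integrality of the determinantal rings $S/J_{\tilde{G}_k}$ (or by the direct argument of \cite{HH1}) rather than by the general ``tensor product of domains'' claim; with that repaired, the proposal matches the cited proofs.
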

	
	\vskip 2mm \noindent
	\textbf{Mapping Cone Construction:} For an edge $e=\{i,j\}\in E(G)$, We consider the following exact sequence:
	\begin{align}\label{main-ses}
	0\longrightarrow \frac{S}{J_{G\setminus e}:f_e}(-2) \stackrel{\cdot f_e}{\longrightarrow} \frac{S}{J_{G\setminus e}} \longrightarrow \frac{S}{J_G} \longrightarrow 0 .
	\end{align}
	By \cite[Theorem 3.7]{FM}, we have $$J_{G\setminus e}:f_e=J_{(G\setminus e)_e}+ (g_{P,t}: P \text{ is a path of length } s+1 \text{ between } i,j \mbox{ and } 0\leq t\leq s ),$$ 
	where for a path $P: i, i_1, \ldots, i_s, j$, $g_{P,0}=y_{i_1}\cdots y_{i_s}$ and for each $1\leq t\leq s$, $g_{P,t}=x_{i_1}\cdots x_{i_t}y_{i_{t+1}}\cdots y_{i_s}.$ 
	Let $(\mathbf{F}.,d^{\mathbf{F}}.)$ and
	$(\mathbf{G}.,d^{\mathbf{G}}.)$ be minimal $S$-free resolutions of
	$S/J_{G\setminus e}$ and $[S/(J_{G\setminus e}:f_e)](-2)$
	respectively. Let $\varphi. : (\mathbf{G}.,d^{\mathbf{G}}.)
	\longrightarrow (\mathbf{F}.,d^{\mathbf{F}}.)$ be the
	complex morphism induced by the multiplication by $f_e$. 
	The mapping cone $(\mathbf{M}(\varphi).,\delta.)$ is an $S$-free resolution of
	$S/J_G$
	such that $(\mathbf{M}(\varphi))_i=\mathbf{F}_i\oplus
	\mathbf{G}_{i-1}$ and the differential maps are
	$\delta_i(x,y)=(d^{\mathbf{F}}_i(x)+\varphi_{i-1}(y),-d^{\mathbf{G}}_{i-1}(y))$
	for $x\in \mathbf{F}_i$ and $y\in \mathbf{G}_{i-1}$. 
	It need not necessarily be a minimal free
	resolution. We refer the reader to \cite{eisenbud95} for more details on the
	mapping cone.
	
	\section{Betti numbers and Syzygy of binomial edge ideals}
	In this section, we describe the first graded Betti numbers and the first
	syzygy of binomial edge ideals of trees and unicyclic graphs. First, we compute the second graded Betti numbers  of $S/J_G$  when $G$ is a tree.
	\begin{theorem}\label{betti-tree}
		Let $G$ be a tree on $[n]$. Then,   \[\beta_2(S/J_G)=	  \beta_{2,4}(S/J_G)={n-1 \choose 2}+\sum_{v \in V(G)} {\deg_G(v) \choose 3}.\]
	\end{theorem}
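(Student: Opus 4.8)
The plan is to compute $\beta_2(S/J_G)$ for a tree $G$ by induction on the number of edges, using the mapping cone construction with a pendant edge. First I would observe that since $G$ is a tree, removing a well-chosen edge $e$ keeps the combinatorics manageable. Specifically, I would pick $e=\{i,j\}$ to be a pendant edge (say $j$ is a leaf), so that $G\setminus e$ consists of the tree $G'$ on the remaining relevant vertices together with the isolated vertex $j$. The key simplification is that for a pendant edge in a tree, there is a \emph{unique} path between $i$ and $j$ (namely the edge itself, of length $1$), so the colon ideal $J_{G\setminus e}:f_e$ described in the excerpt via \cite[Theorem 3.7]{FM} becomes quite explicit: $(G\setminus e)_e$ adds a clique on $N_G(j)=\{i\}$ (trivial) and a clique on $N_G(i)$, and the only path-generators $g_{P,t}$ come from the single edge-path, contributing the linear forms $x_i$ and $y_i$. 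This should let me identify $J_{G\setminus e}:f_e$ and understand its second Betti number.

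Next I would feed these into the short exact sequence (\ref{main-ses}) and take the long exact sequence in $\Tor$, or equivalently read off the mapping cone. Since $J_G$ is generated in degree $2$ and, for trees, the relevant syzygies are expected to sit in degree $4$ (hence the claim $\beta_2=\beta_{2,4}$), I would argue that the mapping cone is minimal in the relevant homological degrees so that $\beta_{2,4}(S/J_G)=\beta_{2,4}(S/J_{G\setminus e}) + \beta_{1,2}(S/(J_{G\setminus e}:f_e))$, with the degree shift by $2$ from the $(-2)$ twist accounting for the passage from $\beta_1$ to $\beta_{2,4}$. This reduces the problem to counting first syzygies of the colon ideal. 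The two families in the formula have a natural combinatorial source: the term $\binom{n-1}{2}$ should come from the Koszul-type relations among the edges meeting at the common structure of the tree (pairs of the $n-1$ edges), and the term $\sum_v \binom{\deg_G(v)}{3}$ should come from the cliques $J_{\tilde{G_i}}$ created at neighborhoods — each vertex $v$ of degree $d$ contributes the second Betti number of the binomial edge ideal of $K_{1,d}$ localized at $v$, and $\binom{d}{3}$ is exactly the number of minimal second syzygies coming from triples of neighbors (equivalently, from the induced claws at $v$).

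The main obstacle I anticipate is \textbf{minimality of the mapping cone}, i.e.\ verifying that no cancellation occurs in homological degree $2$ so that the sum formula above is exact rather than merely an upper bound. Concretely, I must check that the comparison map $\varphi_1$ induced by multiplication by $f_e$ has entries in the maximal ideal (no units), which amounts to a degree count: the generators of $J_{G\setminus e}:f_e$ that are linear (the $x_i,y_i$ forms) get multiplied by the quadric $f_e$ and land in degree-$3$ components of $\mathbf{F}$, and I need to ensure these do not coincide with generators of the resolution of $S/J_{G\setminus e}$ in a way that produces units. A careful degree and support analysis — tracking that all new syzygies genuinely live in degree $4$ and that the concentration $\beta_2=\beta_{2,4}$ persists — is where the real work lies. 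I would also need the base case (a single edge, $K_2$, where both binomial sums vanish) and to confirm the inductive bookkeeping reproduces both $\binom{n-1}{2}$ and the vertex-degree sum correctly, handling how the degree of vertex $i$ changes when the pendant edge is restored.
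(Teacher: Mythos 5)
Your overall strategy---induction on a pendant edge $e=\{i,j\}$ with $j$ a leaf, the short exact sequence (\ref{main-ses}), and the additivity $\beta_{2,4}(S/J_G)=\beta_{2,4}(S/J_{G\setminus e})+\beta_{1,2}(S/(J_{G\setminus e}:f_e))$---is exactly the paper's, and your identification of the two sources of syzygies (Koszul pairs of edges and claws at each vertex) is correct. However, there is a genuine error in your description of the colon ideal. In \cite[Theorem 3.7]{FM} the paths $P$ run between $i$ and $j$ \emph{inside $G\setminus e$}; since $j$ is a leaf, it becomes an isolated vertex of $G\setminus e$, so there are no such paths and no generators $g_{P,t}$ at all. (Even if one tried to count the edge $\{i,j\}$ itself, a path of length one has no internal vertices, so $g_{P,0}$ would be the empty product $1$, not the linear forms $x_i,y_i$.) The correct identification, which the paper uses, is $J_{G\setminus e}:f_e=J_{(G\setminus j)_i}$, a binomial edge ideal generated purely in degree $2$ with $\beta_{1,2}=|E((G\setminus j)_i)|=(n-2)+\binom{\deg_G(i)-1}{2}$; note the added clique is on $N_{G\setminus e}(i)=N_G(i)\setminus\{j\}$, which has $\deg_G(i)-1$ elements, and it is precisely the Pascal identities $\binom{n-2}{2}+(n-2)=\binom{n-1}{2}$ and $\binom{d-1}{3}+\binom{d-1}{2}=\binom{d}{3}$ that close the induction.

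The phantom linear generators $x_i,y_i$ are not a harmless slip: if they were present they would make the binomials $f_{i,k}$ redundant in the colon ideal (corrupting the count of degree-$2$ minimal generators), and after the twist by $(-2)$ a degree-$1$ generator contributes to $\Tor_{2,3}(S/J_G)$, i.e., it would force $\beta_{2,3}(S/J_G)\neq 0$, contradicting the very concentration $\beta_2=\beta_{2,4}$ you are trying to prove. Once the colon ideal is identified correctly, the exactness you flag as the main obstacle requires no mapping-cone minimality analysis: in internal degree $4$ the long exact sequence of $\Tor$ reads from $\Tor_{2,2}(S/(J_{G\setminus e}:f_e))=0$ to $\Tor_{1,4}(S/J_{G\setminus e})=0$, since both ideals are generated in degree $2$, and this immediately yields the additivity of the Betti numbers; the vanishing $\beta_{2,j}(S/J_G)=0$ for $j\neq 4$ follows from the same sequence together with the inductive hypothesis. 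With the colon ideal corrected, your argument becomes the paper's.
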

	\begin{proof}
		We prove this by induction on $n$. If $n=2$, then $G=P_2$, and hence, $J_G$ is complete intersection. Therefore, $\beta_2(S/J_G) = 0$. Hence,   the 	assertion follows. We now assume that $n>2$. Let $e=\{u,v\}$ be an
		edge such that $u$ is a pendant vertex. 
		The  long exact sequence of Tor in degree $j$ component corresponding to the short exact sequence \eqref{main-ses} is:
		\begin{equation}\label{Tor-les}
		\cdots	\rightarrow \Tor_{2,j}^S\left(\frac{S}{J_{G\setminus e}},\K\right)\rightarrow \Tor_{2,j}^S\left(\frac{S}{J_{G}},\K\right) 
		\rightarrow \Tor_{1,j}^S\left(\frac{S}{J_{G\setminus e}:f_e}(-2),\K\right)\rightarrow\cdots
		\end{equation}
		Since $e$ is a cut edge and $u$ is a pendant vertex of $G$,
		$(G\setminus e)_e= (G \setminus u)_v \sqcup \{u\}$. Thus,    it follows
		from \cite[Theorem 3.7]{FM} that $J_{G\setminus e} :f_e =
		J_{(G\setminus u)_v}$. One can observe that
		
		\[\Tor_{1,j}\left(\frac{S}{J_{(G\setminus u)_v}}(-2),\K\right) \simeq
		\Tor_{1,j-2}\left(\frac{S}{J_{(G\setminus u)_v}},\K\right).\] 
		Since $G \setminus e = (G \setminus u) \sqcup \{u\}$,
		$J_{G\setminus e} = J_{G\setminus u}$.
		Therefore, by  induction, we obtain
		\[\beta_{2,4}(S/J_{G
			\setminus e}) = {n-2 \choose 2}+ \sum_{w \in
			V(G)\setminus\{v\}} { \deg_G(w) \choose 3} + {\deg_G(v)-1 \choose
			3}\] and $\beta_{2,j}(S/J_{G\setminus e})=0$ for $j \neq 4$. If $j
		\neq 4$, then \[\Tor_{1,j-2}\left(\frac{S}{J_{(G\setminus
				u)_v}},\K\right)=0.\] Hence, $\beta_{2,j}(S/J_G)=0$, if $j \neq 4$.
		Since $\beta_{2,2}(S/J_{(G\setminus u)_v})=0$ and
		$\beta_{1,4}(S/J_{G\setminus e})=0$, we have $\beta_{2,4}(S/J_G)
		=\beta_{2,4}(S/J_{G\setminus e})+\beta_{1,2}(S/J_{(G \setminus
			u)_v})$. Now, $\beta_{1,2}(S/J_{(G\setminus u)_v})=|E((G\setminus
		u)_v)|=n-2+{\deg_G(v)-1 \choose 2}$. Hence,
		$\beta_2(S/J_G)=\beta_{2,4}(S/J_G)={n-1 \choose 2}+\sum_{v \in
			V(G)} {\deg_G(v) \choose 3}.$
	\end{proof}
	We now describe the first syzygy of binomial edge ideals of trees. To
	compute a minimal generating set of first syzygy, we
	crucially use the knowledge of the Betti numbers of $J_G$. A tree on $[n]$ vertices has $n-1$ edges.
	For
	convenience in writing the list of generators, we need some notation.
	For $A \subseteq [n]$ and $i \in A$, we define
	$p_A(i) = |\{j \in A \mid j \leq i\}|$. The function
	$p_A$ indicates the position of an element in $A$
	when the elements are arranged in the ascending order.
	\begin{theorem}\label{syzygy-tree}
		Let $G$ be a tree on $[n]$ vertices.  Then,  the first syzygy of $J_G$
		is minimally generated by elements of the form
		\begin{enumerate}
			\item[(a)] $f_{i,j}e_{\{k,l\}} - f_{k,l}e_{\{i,j\}}, \text{ where }
			\{i,j\},\{k,l\}\in E(G) \text{ and } \{e_{\{i,j\}} ~ : ~ \{i,j\} \in
			E(G)\}$ is the standard basis of $S(-2)^{n-1}$;
			\item[(b)]  $(-1)^{p_A(j)}f_{k,l}e_{\{i,j\}} + (-1)^{p_A(k)}
			f_{j,l}e_{\{i,k\}} + (-1)^{p_A(l)}f_{j,k}e_{\{i,l\}}, \text{ where
			} A = \{i,j,k,l\} \in \mathcal{C}_G$ $\text{ with center at } i.$
		\end{enumerate}
		
	\end{theorem}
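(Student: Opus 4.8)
The plan is to prove that the elements listed in (a) and (b) generate the first syzygy module $\mathrm{Syz}(J_G)\subseteq S(-2)^{n-1}$ of $J_G$, and then to read off minimality from the known value of $\beta_2(S/J_G)$. First I would check that the proposed elements are syzygies. Those in (a) are the trivial commutator (Koszul) syzygies $f_{i,j}f_{k,l}-f_{k,l}f_{i,j}=0$, so they lie in $\mathrm{Syz}(J_G)$ at once. For an induced claw $A=\{i,j,k,l\}$ with centre $i$, the element in (b) maps under $e_{\{a,b\}}\mapsto f_{a,b}$ to a signed sum $\pm f_{i,j}f_{k,l}\pm f_{i,k}f_{j,l}\pm f_{i,l}f_{j,k}$ over the three complementary-pair products, where the exponents $p_A(\cdot)$ supply exactly the signs of the Plücker relation. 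Since each $f_{a,b}=x_ay_b-x_by_a$ is the $2\times2$ minor on columns $a,b$ of the $2\times4$ matrix with columns $(x_c,y_c)^{t}$, $c\in A$, this alternating sum is a Plücker relation and hence vanishes. Finally the count matches: there are $\binom{n-1}{2}$ elements of type (a), one per unordered pair of edges, and $\sum_{v}\binom{\deg_G(v)}{3}$ elements of type (b), one per induced claw, because a claw centred at $v$ is a choice of three neighbours of $v$ and in a tree any three neighbours of $v$ are pairwise non-adjacent; by Theorem \ref{betti-tree} this total equals $\beta_2(S/J_G)$.

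Next I would establish generation by induction on $n$, reusing the mapping-cone bookkeeping from the proof of Theorem \ref{betti-tree}. The case $n=2$ is clear, as $J_G$ is then principal and the list is empty. For $n>2$, pick a pendant vertex $u$ with neighbour $v$ and put $e=\{u,v\}$; then $J_{G\setminus e}=J_{G\setminus u}$ and, as recorded there, $J_{G\setminus e}:f_e=J_{(G\setminus u)_v}$. The mapping cone of the morphism induced by $\cdot f_e$ on the minimal resolutions in \eqref{main-ses} resolves $S/J_G$ with second term $\mathbf{F}_2\oplus\mathbf{G}_1$, where $\mathbf{F}$ resolves $S/J_{G\setminus u}$ and $\mathbf{G}$ resolves $(S/J_{(G\setminus u)_v})(-2)$; since this is a (not necessarily minimal) free resolution, $\mathrm{Syz}(J_G)=\img\delta_2$ is generated by $\delta_2$ applied to a basis of $\mathbf{F}_2\oplus\mathbf{G}_1$.

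I would then match these generators with the listed elements. The $\mathbf{F}_2$ summand contributes, via $x\mapsto(d^{\mathbf{F}}_2(x),0)$, a minimal generating set of $\mathrm{Syz}(J_{G\setminus u})$ viewed inside $\mathbf{M}_1$; by the induction hypothesis this submodule is generated by the type (a) and (b) elements of $G\setminus u$, that is, all type (a) pairs inside $E(G\setminus u)$ and all claws of $G$ not involving $u$. The basis of $\mathbf{G}_1$ is indexed by $E((G\setminus u)_v)=E(G\setminus u)\cup\{\{w,w'\}:w,w'\in N_G(v)\setminus\{u\}\}$. Choosing the lift $\varphi_1(e_{e'})=f_ee_{e'}$ for $e'\in E(G\setminus u)$ makes $\delta_2(0,e_{e'})=f_ee_{e'}-f_{e'}e_e$, the type (a) syzygy pairing $e$ with $e'$; and for a new clique edge $\{w,w'\}$ the Plücker relation on $\{u,v,w,w'\}$ gives $f_{u,w'}f_{v,w}-f_{u,w}f_{v,w'}=f_{w,w'}f_e$, so I may take $\varphi_1(e_{\{w,w'\}})=f_{u,w'}e_{\{v,w\}}-f_{u,w}e_{\{v,w'\}}$ and obtain for $\delta_2(0,e_{\{w,w'\}})$ exactly the type (b) syzygy of the induced claw $\{v;u,w,w'\}$. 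Because $u$ is pendant it can occur only as a leaf with centre $v$, so every claw of $G$ through $u$ arises this way exactly once; together with Pascal's identity $\binom{\deg_G(v)-1}{2}+\binom{\deg_G(v)-1}{3}=\binom{\deg_G(v)}{3}$ and $\binom{n-2}{2}+(n-2)=\binom{n-1}{2}$, this shows the $\delta_2$-images generate precisely the family (a),(b) for $G$ with no repetitions. Hence the listed elements generate $\mathrm{Syz}(J_G)$, and since their number equals $\beta_2(S/J_G)=\mu(\mathrm{Syz}(J_G))$ and they all lie in degree $4$, a generating set of cardinality $\mu$ is automatically minimal by graded Nakayama.

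The step I expect to be the main obstacle is the sign and homotopy bookkeeping in the inductive step: exhibiting the explicit lift $\varphi_1$ on the new clique edges and verifying that the resulting signs agree with the prescription $(-1)^{p_A(\cdot)}$ in (b), so that the mapping cone returns literally the stated generators and not merely some basis of the same degree-$4$ space.
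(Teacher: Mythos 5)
Your proposal is correct, but it reaches the generation step by a genuinely different route from the paper. Both arguments begin the same way: verify that the type (a) elements are the trivial Koszul syzygies and the type (b) elements are Pl\"ucker relations, and count them against $\beta_2(S/J_G)$ from Theorem \ref{betti-tree}. At that point the paper does \emph{not} induct: since $\beta_{2,j}(S/J_G)=0$ for $j\neq 4$, the syzygy module is generated in degree $4$ and its degree-$4$ component has $\K$-dimension exactly $\beta_{2,4}(S/J_G)$, so it suffices to check that the $\binom{n-1}{2}+|\mathcal{C}_G|$ listed elements are $\K$-linearly independent; the paper does this by inspecting each row of the presentation matrix and observing that its nonzero entries are pairwise distinct binomials $\pm f_{k,l}$, hence linearly independent. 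You instead run an induction on $n$ through the mapping cone applied to the sequence \eqref{main-ses} at a pendant edge, exhibiting the explicit lift $\varphi_1$ (with the Pl\"ucker identity supplying commutativity on the new clique edges of $(G\setminus u)_v$) and matching $\img\delta_2$ with the stated list; this is exactly the strategy the paper reserves for Theorems \ref{syzygy-cycle}, \ref{syzygy-unicyclic3} and \ref{syzygy-unicyclic}, so your proof is a uniform treatment of the tree case by the same machinery. The trade-off is real but minor: the paper's argument sidesteps all homotopy and sign bookkeeping once the Betti numbers are known, whereas yours must nail down $\varphi_1$ and the signs $(-1)^{p_A(\cdot)}$ --- the obstacle you flag --- but any sign discrepancy only changes a generator by a unit and so does not affect the generated submodule; in exchange your construction shows \emph{why} the claw syzygies appear (they are the lifts over the new clique edges) rather than merely verifying that the right number of independent degree-$4$ syzygies exists.
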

	\begin{proof}
		From Theorem \ref{betti-tree}, we have $\beta_{2}(S/J_G)= \beta_{2,4}(S/J_G)={n-1
			\choose 2}+\sum_{v \in V(G)} {\deg_G(v) \choose 3}$. 
		Therefore, the minimal presentation of $J_G$ is of the form
		\[  S(-4)^{\beta_{2,4}(S/J_G)} \stackrel{\varphi}{\longrightarrow}
		S(-2)^{n-1} \stackrel{\psi}{\longrightarrow} J_{G}\longrightarrow
		0. \]
		Note that $|\mathcal{C}_G|=\sum_{v \in
			V(G)} {\deg_G(v) \choose 3}$. Since $\beta_2(S/J_G) = {n-1\choose 2} +
		|\mathcal{C}_G|$, we index the standard basis of $S(-4)^{\beta_2(S/J_G)}$
		accordingly. Let
		\[\begin{array}{lll}
		\mathcal{S}_1 & = & \{ E_{\{i,j\},\{k,l\}} :
		\{i,j\},\{k,l\}\in E(G), i< j, k < l  \text{ and } (i,j) \gneq_{lex}(k,l) \}\\
		\mathcal{S}_2 &=&  
		\{E^{i}_{\{j,k,l\}} : \{i,j,k,l\}\in
		\mathcal{C}_G ~ \text{with center at }i \big\}
		\end{array}\]
		and $\mathcal{S} = \mathcal{S}_1 \cup \mathcal{S}_2$ denote the
		standard basis of $S(-4)^{\beta_{2}(S/J_G)}$. For a pair of edges
		$\{i,j\}, \{k, l\} \in E(G)$, $f_{i,j}f_{k,l} - f_{k,l}f_{i,j} = 0$ gives
		a relation among the generators of $J_G$.
		Let $\{i,j,k,l\} \in \mathcal{C}_G$ be a claw with center at $i$. 
		Then, it can be easily verified that for $A = \{i,j,k,l\}$,
		\[(-1)^{p_A(j)}f_{k,l}f_{i,j} + (-1)^{p_A(k)}
		f_{j,l}f_{i,k} + (-1)^{p_A(l)}f_{j,k}f_{i,l} = 0, \]
		which gives another relation among the generators of $J_G$. 
		Define the maps $\varphi$ and $\psi$ as follows:
		\[ 
		\begin{array}{ll}
		\varphi \big( E_{\{i,j\},\{k,l\}} \big) & =   f_{i,j}e_{\{k,l\}} -
		f_{k,l}e_{\{i,j\}}; \\
		\varphi\big( E^{i}_{\{j,k,l\}} \big) & = 
		(-1)^{p_A(j)}f_{k,l}e_{\{i,j\}} + (-1)^{p_A(k)}
		f_{j,l}e_{\{i,k\}} + (-1)^{p_A(l)}f_{j,k}e_{\{i,l\}}; \\ 
		\psi(e_{\{i,j\}})& =  f_{i,j},
		\end{array}
		\]
		where $A = \{i,j,k,l\}$.
		Observe that $\varphi(\mathcal{S}_1)$ is the collection of  elements of type
		$(a)$ in the statement of the Theorem and 
		$\varphi(\mathcal{S}_2)$ is the collection of elements of type
		$(b)$.
		Also, for any pair of edges $\{i, j\}, \{k, l\}$ and a claw
		$\{u,v,w,z\}$ with $u$ as a center, we have $\psi\big( \varphi (E_{\{i,j\},\{k,l\}})
		\big)=0$ and $\psi( \varphi( E^u_{\{v,z,w\}}) )=0.$ Since
		$\beta_{2,j} = 0$ for all $j \neq 4$, it follows that the first syzygy
		is generated in degree $4$. Moreover, as
		$\beta_2(S/J_G) = \beta_{2,4}(S/J_G) = |\mathcal{S}|$, to prove the assertion,
		it is enough to prove that the elements of $\varphi(\mathcal{S})$ are
		$\K$-linearly independent, equivalently, the columns of the matrix of $\varphi$ are
		$\K$-linearly independent. For this, note that for each
		$\{i, j\} \in E(G)$, the entries of the corresponding row are the
		coefficients of $e_{\{i,j\}}$ in the expression for the images of
		elements in $\mathcal{S}$ under $\varphi$. The coefficient of
		$e_{\{i,j\}}$ in $\varphi(E_{\{i,j\},\{k,l\}})$ or
		$\varphi(E_{\{k,l\},\{i,j\}})$ is $\pm f_{k,l}$. Moreover, the entry
		will be zero in the column corresponding to
		$\varphi(E_{\{u,v\},\{w,z\}})$ for $\{u,v\} \neq \{i,j\}$ and $\{w,z\}
		\neq \{i,j\}$. Therefore, among the first ${n-1 \choose 2}$ column entries in
		the row corresponding to $e_{\{i,j\}}$, there will be $(n-2)$ non-zero
		entries, namely the binomials corresponding to all the edges
		other than $\{i,j\}$. In $\varphi(E^u_{\{v,w,z\}})$, the coefficient
		of $e_{\{i,j\}}$ is non-zero if and only if either $i = u$ and $j \in
		\{v,w,z\}$ or $j = u$ and $i \in \{v,w,z\}$.
		If $i = u$ and $j = v$ (similarly any one of the other three), then
		the coefficient of $e_{\{i,j\}}$ is $\pm f_{w,z}$. It may be noted here
		that $f_{w,z}$ does not correspond to an edge in $G$. If  $E^{u_1}_{\{v_1,w_1,z_1\}}$ and
		$E^{u_2}_{\{v_2,w_2,z_2\}}$ are two
		distinct basis elements  $\{i,j\}$ in both the
		claws, then $\{u_1,v_1,w_1,z_1\} \setminus \{i,j\} \neq
		\{u_2,v_2,w_2,z_2\} \setminus \{i,j\}$. Hence,   the corresponding
		coefficients of $e_{\{i,j\}}$ in $\varphi(E^{u_1}_{\{v_1,w_1,z_1\}})$
		and $\varphi(E^{u_2}_{\{v_2,w_2,z_2\}})$ will not be the same. From the
		above discussion one concludes that in the row corresponding to
		$e_{\{i,j\}}$, each nonzero entry is of the form $\pm f_{k,l}$ for some $k, l
		\in [n]$, $\{k, l\} \neq \{i, j\}$ and no two are equal. Therefore,
		the entries of this row can be seen as the minimal generating set of binomial edge ideal of a
		graph on $[n]$, possibly different from $G$, and hence, they are
		$\K$-linearly independent. Therefore, the assertion follows.
	\end{proof}
	
	We now study the first graded Betti numbers and syzygy of binomial edge
	ideal of unicyclic graphs.  Let $G$ be a unicyclic graph on the vertex
	set $[n]$ of girth $m$.  First, we compute $\beta_2(S/J_G)$, where $G$
	is a unicyclic graph of girth $3$. 
	\begin{theorem}\label{betti-unicyclic3}
		Let $G$ be a unicyclic graph on $[n]$ of girth $3$. Let $v_1,v_2,v_3$
		be the vertices of the cycle in $G$. Then,   
		\[\beta_2(S/J_G) = \beta_{2,3}(S/J_G)+\beta_{2,4}(S/J_G)=2 + \beta_{2,4}(S/J_G),\]\[ \beta_{2,4}(S/J_G)={n \choose 2}+\sum_{v \in V(G)} {\deg_G(v) \choose 3} -\sum_{i=1,2,3}\deg_G(v_i)+3.\]
	\end{theorem}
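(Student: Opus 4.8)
The plan is to delete an edge of the unique triangle and compare with the tree case through the exact sequence \eqref{main-ses}. Set $e=\{v_1,v_2\}$; since $e$ lies on the only cycle, $G\setminus e$ is a tree on $[n]$, and the unique path joining $v_1,v_2$ in $G\setminus e$ is $v_1,v_3,v_2$. By \cite[Theorem 3.7]{FM} this yields $J_{G\setminus e}:f_e=J_{(G\setminus e)_e}+(x_{v_3},y_{v_3})$, since for $P\colon v_1,v_3,v_2$ we get $g_{P,0}=y_{v_3}$ and $g_{P,1}=x_{v_3}$. Because $x_{v_3},y_{v_3}$ form a regular sequence, $S/(J_{G\setminus e}:f_e)\cong \bar S/J_{H}$, where $\bar S=\K[x_i,y_i:i\neq v_3]$ and $H=(G\setminus e)_e\setminus v_3$; tensoring the minimal $\bar S$-resolution of $\bar S/J_H$ with the Koszul complex on $x_{v_3},y_{v_3}$ gives $\beta^S_{i,j}(S/(J_{G\setminus e}:f_e))=\sum_{p+q=i}\binom{2}{q}\beta^{\bar S}_{p,j-q}(\bar S/J_H)$. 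From this I read off $\beta_{1,1}=2$ (the classes of $x_{v_3},y_{v_3}$), $\beta_{2,2}=1$ (their Koszul syzygy), $\beta_{1,2}=|E(H)|$, and the vanishing of the other relevant Betti numbers.

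I would then feed these into the degree-$j$ strand of \eqref{Tor-les}, writing $A=[S/(J_{G\setminus e}:f_e)](-2)$, $B=S/J_{G\setminus e}$, $C=S/J_G$. Since $G\setminus e$ is a tree, Theorem \ref{betti-tree} gives $\Tor_{2,j}(B)=0$ for $j\neq4$, while $\Tor_{1,j}(B)=0$ for $j\neq2$. Hence for $j\neq3,4$ both neighbours of $\Tor_{2,j}(C)$ vanish, so $\beta_{2,j}(S/J_G)=0$ there. For $j=3$ the sequence reads $0\to\Tor_{2,3}(C)\to\Tor_{1,1}(S/(J_{G\setminus e}:f_e))\to0$, giving $\beta_{2,3}(S/J_G)=2$. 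For $j=4$, using $\Tor_{1,4}(B)=0$, it yields $0\to\operatorname{coker}\alpha\to\Tor_{2,4}(C)\to\Tor_{1,2}(S/(J_{G\setminus e}:f_e))\to0$, where $\alpha\colon\Tor_{2,4}(A)\to\Tor_{2,4}(B)$ is induced by multiplication by $f_e$ and $\Tor_{2,4}(A)=\beta_{2,2}=1$. Thus $\beta_{2,4}(S/J_G)=\beta_{2,4}(S/J_{G\setminus e})-\rank\alpha+|E(H)|$.

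The crux, and the main obstacle, is to prove $\rank\alpha=1$, i.e. that $\alpha$ is injective, by explicitly lifting $\cdot f_e$ to the comparison morphism $\varphi.\colon\mathbf{G}.\to\mathbf{F}.$ of the mapping cone. Using the edges $\{v_1,v_3\},\{v_2,v_3\}$ one checks the identities $f_e\,x_{v_3}=x_{v_2}f_{v_1,v_3}-x_{v_1}f_{v_2,v_3}$ and $f_e\,y_{v_3}=y_{v_2}f_{v_1,v_3}-y_{v_1}f_{v_2,v_3}$, so $\varphi_1$ may be chosen sending the generators $x_{v_3},y_{v_3}$ of the colon ideal to $x_{v_2}e_{\{v_1,v_3\}}-x_{v_1}e_{\{v_2,v_3\}}$ and $y_{v_2}e_{\{v_1,v_3\}}-y_{v_1}e_{\{v_2,v_3\}}$. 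Applying $\varphi_1$ to the Koszul syzygy $y_{v_3}e_1-x_{v_3}e_2$ produces $f_{v_2,v_3}e_{\{v_1,v_3\}}-f_{v_1,v_3}e_{\{v_2,v_3\}}$, which is exactly a type-$(a)$ generator of the first syzygy of $J_{G\setminus e}$ in Theorem \ref{syzygy-tree}, hence a minimal generator. Therefore the lift $\varphi_2$ of the Koszul class is congruent to the basis vector $E_{\{v_1,v_3\},\{v_2,v_3\}}$ modulo $\mathfrak{m}\,\mathbf{F}_2$, so $\alpha=\varphi_2\otimes\K$ is nonzero on the one-dimensional space $\Tor_{2,4}(A)$ and thus injective.

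Finally I would evaluate the count. With $\rank\alpha=1$, Theorem \ref{betti-tree} gives $\beta_{2,4}(S/J_{G\setminus e})=\binom{n-1}{2}+\sum_{v\neq v_1,v_2}\binom{\deg_G(v)}{3}+\binom{\deg_G(v_1)-1}{3}+\binom{\deg_G(v_2)-1}{3}$. For the edge count, $(G\setminus e)_e$ adjoins cliques on $N_{G\setminus e}(v_1)$ and on $N_{G\setminus e}(v_2)$, disjointly since $v_3$ is their unique common neighbour in the tree; hence $|E((G\setminus e)_e)|=(n-1)+\binom{\deg_G(v_1)-1}{2}+\binom{\deg_G(v_2)-1}{2}$ and $\deg_{(G\setminus e)_e}(v_3)=\deg_G(v_1)+\deg_G(v_2)+\deg_G(v_3)-4$, which together determine $|E(H)|$. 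Substituting into $\beta_{2,4}(S/J_G)=\beta_{2,4}(S/J_{G\setminus e})-1+|E(H)|$ and simplifying via $\binom{m-1}{3}+\binom{m-1}{2}=\binom{m}{3}$ and $\binom{n-1}{2}+(n-1)=\binom{n}{2}$ collapses the expression to $\binom{n}{2}+\sum_{v\in V(G)}\binom{\deg_G(v)}{3}-\sum_{i=1,2,3}\deg_G(v_i)+3$, as claimed.
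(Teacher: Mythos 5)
Your proposal is correct, but it takes a genuinely different route from the paper. The paper keeps the triangle intact and inducts on $n$ by deleting a pendant edge $e=\{u,v\}$: the colon ideal $J_{G\setminus e}:f_e=J_{(G\setminus u)_v}$ is then again a binomial edge ideal, so $\Tor_{2,2}$ of it vanishes, the connecting map between the $\Tor_2$'s never interferes, and the recursion $\beta_{2,4}(S/J_G)=\beta_{2,4}(S/J_{G\setminus e})+\beta_{1,2}(S/J_{(G\setminus u)_v})$ closes immediately; the values $\beta_{2,3}=2$ and the base case $K_3$ are imported from \cite{KM12}. You instead delete a triangle edge, landing on the tree formula of Theorem \ref{betti-tree} in one step — essentially the same strategy the paper itself uses for girth $\geq 4$ in Theorem \ref{betti-unicyclic}, now pushed down to girth $3$, where the path generators $g_{P,t}$ become the linear forms $x_{v_3},y_{v_3}$. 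This is the real point of divergence: their Koszul syzygy produces a nonzero $\Tor_{2,4}$ of the colon ideal, so you must show the comparison map $\alpha$ has rank $1$, which you do correctly by exhibiting the lift $\varphi_1(E_x)=x_{v_2}e_{\{v_1,v_3\}}-x_{v_1}e_{\{v_2,v_3\}}$ (the identity $x_{v_3}f_{v_1,v_2}=x_{v_2}f_{v_1,v_3}-x_{v_1}f_{v_2,v_3}$ checks out) and observing that the Koszul class maps to the type-(a) minimal syzygy $f_{v_2,v_3}e_{\{v_1,v_3\}}-f_{v_1,v_3}e_{\{v_2,v_3\}}$ of $J_{G\setminus e}$, hence survives modulo $\mathfrak{m}\mathbf{F}_2$ by minimality of $\mathbf{F}_\bullet$ (Theorem \ref{syzygy-tree} depends only on Theorem \ref{betti-tree}, so there is no circularity). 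Your edge and degree counts for $(G\setminus e)_e$ and $H=(G\setminus e)_e\setminus v_3$ are right — the key facts being that $v_3$ is the unique common neighbour of $v_1,v_2$ in the tree $G\setminus e$ and that no clique edge duplicates a tree edge — and the telescoping via $\binom{m-1}{3}+\binom{m-1}{2}=\binom{m}{3}$ recovers the stated formula (it also correctly returns $\beta_{2,4}=0$ for $K_3$). What your approach buys is self-containedness ($\beta_{2,3}=2$ falls out of the degree-$3$ strand rather than being quoted) and no induction; what it costs is the explicit rank computation for $\alpha$, which the paper's choice of edge is designed to avoid.
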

	
	\begin{proof}
		We prove this by induction on $n$. By \cite[Theorem 2.2]{KM12},  for any graph
		$G$, $\beta_{2,3}(S/J_G) = 2\sk_3(G)$, where $\sk_3(G)$ denotes the number
		of $K_3$'s appearing in $G$. If $n=3$, then
		$G=K_3$, and hence, the assertion follows from \cite[Theorem 2.1]{KM12}.
		We now assume that
		$n>3$. Let $e=\{u,v\}$ be an edge such that $u$ is a pendant
		vertex. 
		Since $e$ is a cut edge and $u$ is a pendant vertex of $G$,
		$(G\setminus e)_e= (G \setminus u)_v \sqcup \{u\}$. Thus,
		$J_{G\setminus e} :f_e = J_{(G\setminus u)_v}$. By
		\cite[Theorem 2.2]{KM12}, we get $\beta_{2,3}(S/J_{G\setminus e})=2$.
		Therefore, by induction, we get
		\[\beta_{2,4}(S/J_{G\setminus e})= {n-1 \choose 2}+ \sum_{w \in
			V(G)\setminus\{v\}} { \deg_G(w) \choose 3} + {\deg_G(v)-1 \choose
			3}-\sum_{i=1,2,3}\deg_{G\setminus e}(v_i)+3\] and
		$\beta_{2,j}(S/J_{G\setminus e})=0$ for $j > 4$. If $j \neq 4$, then
		$\Tor_{1,j-2}\left(\frac{S}{J_{(G\setminus u)_v}},\K\right)=0.$
		Hence, the long exact sequence (\ref{Tor-les}) gives  that $\beta_{2,j}(S/J_G)=0$, if $j > 4$. Since
		$\beta_{2,2}(S/J_{(G\setminus u)_v})=0$ and
		$\beta_{1,4}(S/J_{G\setminus e})=0$, it follows from the long exact sequence (\ref{Tor-les}) that  $\beta_{2,4}(S/J_G)
		=\beta_{2,4}(S/J_{G\setminus e})+\beta_{1,2}(S/J_{(G \setminus
			u)_v})$. If $v=v_i$ for some $i$,
		then  $\beta_{1,2}(S/J_{(G\setminus u)_v})=|E((G\setminus
		u)_v)|=|E(G)|-1+{\deg_G(v)-1 \choose 2}-1=n-2+{\deg_G(v)-1 \choose
			2}$. Moreover, for this $i$, $\deg_{G\setminus e}(v_i) = \deg_G(v_i) -
		1$. Hence,   we get the required expression for $\beta_{2,4}(S/J_G)$.
		If $v\neq v_i$ for all $i$, then $\beta_{1,2}(S/J_{(G\setminus
			u)_v})=|E((G\setminus u)_v)|=n-1+{\deg_G(v)-1 \choose 2}$. Hence,
		$\beta_{2,4}(S/J_G)={n \choose 2}+\sum_{v \in V(G)} {\deg_G(v)
			\choose 3}-\sum_{i=1,2,3}\deg_G(v_i)+3.$ 
	\end{proof}
	We now compute the second graded Betti numbers of $S/J_G$ when $G$ is a
	unicyclic graph of girth at least $4$.
	\begin{theorem}\label{betti-unicyclic}
		If $G$ is a unicyclic graph on $[n]$ of girth $m\geq 4$, 
		then  \[\beta_2(S/J_G)= \left\{
		\begin{array}{ll}
		\beta_{2,4}(S/J_G), & \text{ if } m = 4, \\
		\beta_{2,4}(S/J_G) + \beta_{2,m}(S/J_G) & \text{ if } m > 4,
		\end{array} \right.\]
		where
		\[\beta_{2,4}(S/J_G)= \left\{
		\begin{array}{ll}
		{n \choose 2}+\sum_{v \in V(G)} {\deg_G(v) \choose 3} + 3 &
		\text{ if } m = 4,\\
		{n \choose 2}+\sum_{v \in V(G)} {\deg_G(v) \choose 3}  &
		\text{ if } m > 4,
		\end{array}\right.\]
		$ \text{ and }\; \beta_{2,m}(S/J_G)=m-1,$ if $m > 4$.
	\end{theorem}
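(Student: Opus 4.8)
The plan is to induct on the number of vertices $n\ge m$, using the short exact sequence \eqref{main-ses} attached to a carefully chosen edge, exactly in the spirit of the proofs of Theorems \ref{betti-tree} and \ref{betti-unicyclic3}. The only genuinely new feature compared with those results is the single ``cycle syzygy'' whose degree equals the girth; it is responsible for the term $\beta_{2,m}(S/J_G)=m-1$, and the whole point of the argument is to show that this syzygy survives the induction.

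For the base case $n=m$ we have $G=C_m$. Here I would take $e$ to be a cycle edge, so that $G\setminus e=P_m$ is a path and $J_{P_m}$ is a complete intersection with a Koszul resolution; in particular $\beta_{2,4}(S/J_{P_m})=\binom{m-1}{2}$, $\beta_{1,2}(S/J_{P_m})=m-1$, and all other relevant Betti numbers vanish. Since the two endpoints of $e$ have degree one in $P_m$, the clique operation adds nothing and \cite[Theorem 3.7]{FM} gives $J_{P_m}\colon f_e=J_{P_m}+(g_{P,0},\dots,g_{P,m-2})$, where $g_{P,0},\dots,g_{P,m-2}$ are the $m-1$ monomials of degree $m-2$ arising from the unique path joining the endpoints of $e$. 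Feeding this into the long exact sequence \eqref{Tor-les} and tracking degrees, these $m-1$ new generators sit in internal degree $m-2$, hence after the twist by $(-2)$ they contribute in homological degree $2$ and internal degree $m$: this yields $\beta_{2,m}(S/J_{C_m})=m-1$ when $m>4$, while $\beta_{2,4}(S/J_{C_m})=\binom{m-1}{2}+(m-1)=\binom{m}{2}$. When $m=4$ the monomials $g_{P,t}$ have degree $2$, so they land in internal degree $4$ and produce the extra summand $+3=m-1$ inside $\beta_{2,4}$, which explains the dichotomy in the statement.

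For the inductive step $n>m$ I would pick a pendant vertex $u$ lying off the cycle, with neighbour $v$, and set $e=\{u,v\}$. Then $J_{G\setminus e}=J_{G\setminus u}$, where $G\setminus u$ is again unicyclic of girth $m$ on $n-1$ vertices, so its second Betti numbers are known by induction. Since $u$ is pendant, \cite[Theorem 3.7]{FM} gives $J_{G\setminus e}\colon f_e=J_{(G\setminus u)_v}$ with no extra path terms. As $(G\setminus u)_v$ has a binomial edge ideal, its first syzygies are concentrated in internal degree $2$, so $\Tor_{1,j-2}(S/J_{(G\setminus u)_v},\K)$ vanishes for $j\ne 4$, and $\beta_{1,2}(S/J_{(G\setminus u)_v})=|E((G\setminus u)_v)|$. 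Because $m\ge 4$, no two neighbours of $v$ are adjacent in $G\setminus u$ (else a triangle would appear), so the clique operation adds exactly $\binom{\deg_G(v)-1}{2}$ edges and $|E((G\setminus u)_v)|=(n-1)+\binom{\deg_G(v)-1}{2}$. Reading \eqref{Tor-les} in internal degree $4$ exactly as in Theorem \ref{betti-tree} (where the comparison map is zero simply because $\beta_{2,2}(S/J_{(G\setminus u)_v})=0$) gives $\beta_{2,4}(S/J_G)=\beta_{2,4}(S/J_{G\setminus u})+|E((G\setminus u)_v)|$, and the two Pascal identities $\binom{n-1}{2}+(n-1)=\binom{n}{2}$ and $\binom{\deg_G(v)-1}{3}+\binom{\deg_G(v)-1}{2}=\binom{\deg_G(v)}{3}$ collapse this to the claimed formula (the summand $+3$ for $m=4$ being carried along unchanged).

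The hard part is the behaviour at internal degree $m$ in the inductive step: there $\beta_{2,m}(S/J_{G\setminus u})=m-1$ is nonzero, and \eqref{Tor-les} only tells us that $\beta_{2,m}(S/J_G)$ is the cokernel of the comparison map $\Tor_{2,m}(\tfrac{S}{J_{(G\setminus u)_v}}(-2),\K)\to\Tor_{2,m}(\tfrac{S}{J_{G\setminus u}},\K)$, whose source need not vanish for degree reasons (e.g.\ when $m=6$ it is $\beta_{2,4}(S/J_{(G\setminus u)_v})\ne 0$). To show this map is zero I would pass to the fine $\mathbb{Z}^n$-multigrading on $S$ given by $\deg x_i=\deg y_i=e_i$. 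The module $S/J_{(G\setminus u)_v}$ does not involve $x_u,y_u$, so all multidegrees occurring in its $\Tor$ have $u$-coordinate $0$; after the multigraded twist by the multidegree $e_u+e_v$ of $f_e$ that makes multiplication by $f_e$ homogeneous, every multidegree on the source acquires $u$-coordinate $1$. Since $S/J_{G\setminus u}$ also omits $x_u,y_u$, its $\Tor$ lives in multidegrees with $u$-coordinate $0$, and a multigraded map between modules supported in disjoint multidegrees must vanish. Consequently all comparison maps are zero, the mapping cone of \eqref{main-ses} is a minimal free resolution, and we obtain $\beta_{i,j}(S/J_G)=\beta_{i,j}(S/J_{G\setminus u})+\beta_{i-1,j-2}(S/J_{(G\setminus u)_v})$; in particular $\beta_{2,m}(S/J_G)=m-1$ is preserved and $\beta_{2,j}(S/J_G)=0$ for $j\ne 4,m$, completing the induction.
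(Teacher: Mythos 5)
Your proof is correct, but it takes a genuinely different route from the paper's. The paper avoids induction on $n$ altogether: it deletes an edge $e=\{u,v\}$ of the cycle, so that $G\setminus e$ is a tree whose second Betti numbers are already known from Theorem \ref{betti-tree} and are concentrated in internal degree $4$; the colon ideal $J_{G\setminus e}:f_e=J_{((G\setminus e)_v)_u}+I$ then carries the $m-1$ path monomials of degree $m-2$, and since $\Tor_{2,m}(S/J_{G\setminus e},\K)=0$ for $m>4$ the long exact sequence \eqref{Tor-les} immediately identifies $\Tor_{2,m}(S/J_{G},\K)$ with $\Tor_{1,m-2}\left(S/(J_{((G\setminus e)_v)_u}+I),\K\right)$, which has dimension $m-1$. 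In other words, the paper's choice of edge makes the comparison map you worry about vanish for trivial degree reasons, at the cost of analyzing the more complicated colon ideal $J_{((G\setminus e)_v)_u}+I$. Your pendant-edge induction keeps the colon ideal simple ($J_{(G\setminus u)_v}$, no path monomials) but forces you to confront the genuinely nonzero source $\Tor_{2,m}\left(\tfrac{S}{J_{(G\setminus u)_v}}(-2),\K\right)$; your fine-multigrading argument (source supported in multidegrees with $u$-coordinate $1$, target with $u$-coordinate $0$) disposes of it cleanly, and in fact proves the stronger statement that the mapping cone of \eqref{main-ses} is a minimal free resolution whenever a pendant edge is deleted, together with the full additivity $\beta_{i,j}(S/J_G)=\beta_{i,j}(S/J_{G\setminus u})+\beta_{i-1,j-2}(S/J_{(G\setminus u)_v})$ in every homological degree --- something the paper only establishes in special cases (e.g.\ its proposition on $K_{1,n}$, via regularity bounds). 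Both arguments are sound; the paper's is shorter because it leans on Theorem \ref{betti-tree} and never iterates, while yours yields more information and a reusable minimality criterion.
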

	\begin{proof}
		Let $e=\{u,v\}$
		be an edge of the cycle in $G$. Then,  after removing the edge $e$,
		$G\setminus e$ becomes a tree. Therefore, from Theorem
		\ref{betti-tree}, we have \[\beta_{2}(S/J_{G\setminus
			e})=\beta_{2,4}(S/J_{G\setminus e})={n-1 \choose 2}+ \sum_{w \in
			V(G)\setminus \{u,v\}} {\deg_G(w) \choose 3}+ \sum_{w\in
			\{u,v\}}{\deg_{G}(w)-1 \choose 3}.\]
		Note that $(G\setminus e)_e= ((G \setminus e)_v)_u$. 
		
		It follows from
		\cite[Theorem 3.7]{FM} that $J_{G\setminus e} :f_e =
		J_{((G\setminus e)_v)_u}+I$, where $$I=(g_{P,t}: P : u,i_1,\dots,i_s,v
		\text{ is a path between }u \text{ and } v \text{ in } G \setminus e ~
		\text{and} ~ 0\leq t\leq s).$$
		In $G\setminus e$, there is only one path between $u$ and $v$ and the
		corresponding $g_{P,t}$ has degree $m-2$ for all $t$. Since $\beta_{2,2}(S/(J_{((G\setminus e)_v)_u}+I))=0$ and
		$\beta_{1,4}(S/J_{G\setminus e})=0$, we have $\beta_{2,4}(S/J_G) =\beta_{2,4}(S/J_{G\setminus e})+\beta_{1,2}(S/(J_{((G \setminus e)_v)_u}+I))$.
		For $m = 4$, $I = (y_2y_3, x_2y_3, x_2x_3)$. Therefore,
		\[\beta_{1,2}(S/(J_{(G\setminus e)_e}+I)) = 3 + |E( (G \setminus
		e)_e)| = 3+(n-1) + {\deg_G(v)-1 \choose 2} + {\deg_G(u) - 1 \choose 2}.\]
		Hence, \[\beta_{2,4}(S/J_G) = \beta_{2,4}(S/J_{G\setminus e}) +
		\beta_{1,2}(S/(J_{(G\setminus e)_e}+I)) = {n\choose 2} +
		\sum_{v\in V(G)}{\deg_G(u) \choose 3}+3.\] Also, $\beta_{2,j}(S/J_{G\setminus e})=0$ and $\beta_{1,j-2}(S/(J_{(G \setminus e)_e}+I))=0$, if $j \neq 4$.
		Therefore, $\beta_{2,j}(S/J_G)=0$, if $j \neq 4$.
		Now assume that $m > 4$. Note that for $j \neq 4$, $j\neq m$, 
		\[\Tor_{1,j-2}\left(\frac{S}{J_{((G\setminus e)_v)_u}+I},\K\right)=0
		\text{ and }
		\dim_{\K}\left(\Tor_{1,m-2}\left(\frac{S}{J_{((G\setminus
				e)_v)_u}+I},\K\right)\right)=m-1.\] 
		Hence,   it follows from the long exact sequence (\ref{Tor-les}) that $\beta_{2,j}(S/J_G)=0$, if
		$j \notin \{4,m\}$. Since $\beta_{1,j}(S/J_{G\setminus e})=0$ for
		$j \neq 2$, we  have \[\Tor_{1,m-2}\left(\frac{S}{J_{((G\setminus
				e)_v)_u}+I},\K\right) \simeq
		\Tor_{2,m}\left(\frac{S}{J_{G}},\K\right) .\] Thus,
		for $m > 4$, $\beta_{2,m}(S/J_G) =m-1$.
		Now, $\beta_{1,2}(S/(J_{((G\setminus e)_v)_u})+I)=|E(((G\setminus e)_v)_u)|=n-1+{\deg_G(v)-1 \choose 2}+{\deg_G(u)-1 \choose 2}$.
		Hence, $\beta_{2,4}(S/J_G)={n \choose 2}+\sum_{v \in V(G)} {\deg_G(v) \choose 3}.$
	\end{proof}
	
	Now, we obtain a minimal generating set for the first syzygy of $J_{C_n}$ for $n\geq 4$.
	Let $G = C_n$ be a cycle on $[n]$ with edge set $E(C_n)=\{\{i,i+1\},\{1,n\}: 1\leq i\leq n-1\}$.  
	
	\begin{theorem}\label{syzygy-cycle}
		Let $C_n$ be the cycle on $n$ vertices, $n \geq 4$. Let $\{e_{\{k,k+1\}},e_{\{1,n\}} : 1
		\leq k \leq n-1\}$ denote the standard basis of $S^n$ and $Y=y_1\cdots y_n$.  For $i = 1,
		\ldots, n-1$, define $b_i \in S^n$ as follows:
		\[
		(b_1)_k = \frac{Y}{y_ky_{k+1}} \text{ for }1 \leq k \leq n-1,
		(b_1)_n = \frac{Y}{y_1y_n}, \]
		\[
		\text{ for } 1 \leq i \leq n-2, ~
		(b_{i+1})_k = 
		\begin{cases}
		{(b_i)_k\cdot \frac{x_{i+2}}{y_{i+2}} } & \text{ if }
		k\leq i, \\
		{(b_i)_{i+1}\cdot \frac{x_1}{y_1} } &  \text{ if } k=i+1,\\
		{(b_i)_k\cdot \frac{x_{i+1}}{y_{i+1}} } & \text{ if }k\geq i+2.
		\end{cases}
		\]
		Then,  the first syzygy of $J_{C_n}$is minimally generated by
		\[
		\big\{ f_{k,l}e_{\{i,j\}} - f_{i,j} e_{\{k,l\}}  : \{i,j\},\{k,l\} \in E(C_n)\big\} \bigcup 
		\left\{ \sum_{k=1}^{n-1}
		(b_i)_ke_{\{k,k+1\}} - (b_i)_ne_{\{1,n\}}~ : ~ 1 \leq i \leq n-1\right\}.
		\]
	\end{theorem}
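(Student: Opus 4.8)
The plan is to read off the shape of a minimal presentation from Theorem \ref{betti-unicyclic} and then exhibit the two listed families as minimal generators, degree by degree. Since $C_n$ is unicyclic of girth $m=n\ge 4$ and every vertex has degree $2$, there are no induced claws, so Theorem \ref{betti-unicyclic} gives $\beta_{2,4}(S/J_{C_n})=\binom{n}{2}$ (resp.\ $\binom{n}{2}+3$ when $n=4$) and $\beta_{2,n}(S/J_{C_n})=n-1$ when $n>4$; in every case $\beta_2(S/J_{C_n})=\binom{n}{2}+(n-1)$. Writing $\psi\colon S(-2)^n\to J_{C_n}$ for the map $e_{\{i,j\}}\mapsto f_{i,j}$ and $Z=\ker\psi$ for the first syzygy module, the minimal presentation has the form $S(-4)^{\binom{n}{2}}\oplus S(-n)^{n-1}\to S(-2)^n\to J_{C_n}\to 0$ for $n>4$ (with both free summands in degree $4$ when $n=4$). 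Thus it suffices to produce $\binom{n}{2}$ minimal generators in degree $4$ and $n-1$ minimal generators in degree $n$, and I claim these are the Koszul relations and the $b_i$, respectively.

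The Koszul relations $f_{k,l}e_{\{i,j\}}-f_{i,j}e_{\{k,l\}}$ are syzygies for the trivial reason $f_{i,j}f_{k,l}-f_{k,l}f_{i,j}=0$, and there are exactly $\binom{n}{2}$ of them. Because $4$ is the least degree in which $Z$ is nonzero, $(\mathfrak{m}Z)_4=0$, where $\mathfrak{m}$ is the graded maximal ideal of $S$; hence $Z_4=(Z/\mathfrak{m}Z)_4\cong\K^{\binom{n}{2}}$, and it is enough to check that these $\binom{n}{2}$ relations are $\K$-linearly independent. I would do this exactly as in the proof of Theorem \ref{syzygy-tree}: arrange them as the columns of a matrix with rows indexed by the edges of $C_n$; the row of $e_{\{i,j\}}$ has entries $\pm f_{k,l}$ over the edges $\{k,l\}\neq\{i,j\}$, and since these binomials are pairwise distinct, independence of the columns follows (there are no type-(b) claw relations to interfere). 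So the Koszul relations form a $\K$-basis of $Z_4$. Next I would check $\psi(b_i)=0$. For $b_1$, setting $A_j=x_j\prod_{l\neq j}y_l$ one computes $(b_1)_kf_{k,k+1}=A_k-A_{k+1}$, whence $\sum_{k=1}^{n-1}(b_1)_kf_{k,k+1}=A_1-A_n=(b_1)_nf_{1,n}$; the telescoping is the whole point. For general $i$ one verifies $\psi(b_i)=0$ by induction on $i$, the recursion defining $b_{i+1}$ from $b_i$ being designed to preserve this telescoping identity. Equivalently, $b_i$ is the syzygy produced by the mapping cone of \eqref{main-ses} for $e=\{1,n\}$ from the degree-$(n-2)$ generator $g_{P,i-1}$ of $J_{C_n\setminus e}:f_e$, where $P\colon 1,2,\dots,n$ is the unique path in $C_n\setminus e$; in particular $(b_i)_n=g_{P,i-1}=x_2\cdots x_i\,y_{i+1}\cdots y_{n-1}$, as one reads off from the recursion.

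The crux, and the main obstacle, is minimality of the $b_i$: that no nontrivial $\K$-combination of them is redundant. Suppose $\sum_{i=1}^{n-1}c_ib_i\in\mathfrak{m}Z$ with $c_i\in\K$. Since $\beta_{2,j}(S/J_{C_n})=0$ for $4<j<n$ (Theorem \ref{betti-unicyclic}) and $Z_4$ is spanned by the Koszul relations, one gets $(\mathfrak{m}Z)_n=\mathfrak{m}_{n-4}\cdot Z_4=\mathfrak{m}_{n-4}\cdot\mathrm{span}_{\K}\{\text{Koszul relations}\}$. Now compare $e_{\{1,n\}}$-coordinates. The $e_{\{1,n\}}$-coordinate of each Koszul relation is either $0$ or $\pm f_{k,k+1}$ for a path edge, hence lies in the path ideal $J_{P_n}=(f_{1,2},\dots,f_{n-1,n})$; therefore the $e_{\{1,n\}}$-coordinate of any element of $(\mathfrak{m}Z)_n$ lies in $J_{P_n}$. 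On the other hand that of $\sum c_ib_i$ is $-\sum_i c_i\,g_{P,i-1}$, so $\sum_i c_i\,g_{P,i-1}\in J_{P_n}$. But $J_{P_n}$ is homogeneous for the grading by total degree in the $x$-variables (each $f_{k,k+1}$ is $x$-homogeneous of $x$-degree $1$), while the monomials $g_{P,0},\dots,g_{P,n-2}$ have pairwise distinct $x$-degrees $0,1,\dots,n-2$; hence each $c_i\,g_{P,i-1}\in J_{P_n}$. Finally $g_{P,i-1}\notin J_{P_n}$ — evaluating $x_l=y_l=1$ annihilates $J_{P_n}$ but sends $g_{P,i-1}\mapsto 1$ — so every $c_i=0$, proving the $b_i$ are $\K$-linearly independent modulo $\mathfrak{m}Z$.

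Combining the two families finishes the argument. For $n>4$ they lie in degrees $4$ and $n$ and map to a $\K$-basis of $Z/\mathfrak{m}Z$ (of dimensions $\binom{n}{2}$ and $n-1$ in those degrees), so by graded Nakayama they minimally generate $Z$; for $n=4$ all of them lie in degree $4$, and the same $e_{\{1,n\}}$-coordinate comparison shows the three $b_i$ are independent of the Koszul span, so the $\binom{4}{2}+3=9$ elements form a $\K$-basis of $Z_4$. The one delicate point is Paragraph 4 — excluding redundancy of the $b_i$ — which is resolved by comparing $e_{\{1,n\}}$-coordinates and invoking the $x$-degree grading of $J_{P_n}$.
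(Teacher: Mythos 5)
Your argument is correct, but it establishes minimality by a genuinely different mechanism than the paper. The paper realizes the presentation as the mapping cone of the sequence \eqref{main-ses} for $e=\{1,n\}$: since $J_{P_n}$ is a complete intersection, $\mathbf{F}.$ is the Koszul complex, $J_{P_n}:f_{1,n}$ is read off from \cite[Theorem 3.7]{FM}, and the lifting map $\varphi_1$ is built precisely from your $b_i$'s; minimality then comes for free because $\rank(F_2\oplus G_1)$ and $\rank(F_1\oplus G_0)$ coincide with $\beta_2(S/J_{C_n})$ and $\beta_1(S/J_{C_n})$, so no linear-independence argument is ever made. You instead verify directly that the listed elements are syzygies and show they descend to a $\K$-basis of $Z/\mathfrak{m}Z$, which forces you to confront the point the mapping cone hides: that no nontrivial combination of the $b_i$ lies in $\mathfrak{m}Z$ (equivalently, for $n>4$, in $S_{n-4}\cdot Z_4$). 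Your resolution of this --- projecting to the $e_{\{1,n\}}$-coordinate, observing that every Koszul contribution there lies in $J_{P_n}$, separating the monomials $g_{P,0},\dots,g_{P,n-2}$ by their $x$-degree since $J_{P_n}$ is homogeneous for that grading, and killing $J_{P_n}$ by the evaluation $x_l=y_l=1$ --- is a clean, self-contained argument that does not appear in the paper, and it handles $n=4$ and $n>4$ uniformly. The trade-off: the paper needs the colon-ideal computation and the chain-map verification but gets minimality by a rank count; you need only the graded Betti numbers, the fact that the listed elements are syzygies, and graded Nakayama. The two routes share the essential computation that $\sum_{k=1}^{n-1}(b_i)_kf_{k,k+1}=(b_i)_nf_{1,n}$; this is the one place where your write-up defers to an unexecuted induction for $i\geq 2$ (the paper carries out the three-part telescoping sum explicitly), so you should still write that verification out, but it is routine and not a gap in the strategy.
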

	
	\begin{proof}
		By \cite[Corollary 16]{Zafar}, 
		$\beta_{2,4}(S/J_{C_n})=\left\{
		\begin{array}{ll}
		9 & \text{ if } n = 4;\\
		\binom{n}{2} & \text{ if } n > 4, 
		\end{array}\right.$ $\beta_{2,n}(S/J_{C_n})=n-1$ for $n > 4$ and
		$\beta_{2,j}(S/J_{C_n}) = 0$ for all $j \neq 4,n$. 
		Therefore,  the minimal presentation of $J_{C_n}$ is
		\begin{equation}
		S(-4)^{\binom{n}{2}}\oplus S(-n)^{n-1} \longrightarrow S(-2)^n \longrightarrow J_{C_n}\longrightarrow 0.
		\end{equation}
		Note that $J_{C_n}=J_{P_n}+(f_{1,n})$. Consider the following exact sequence
		$$ 0\longrightarrow \frac{S}{J_{P_n}:f_{1,n}}(-2) \stackrel{\cdot f_{1,n}}{\longrightarrow} \frac{S}{J_{P_n}} \longrightarrow \frac{S}{J_{C_n}} \longrightarrow 0
		$$
		and apply the mapping cone construction. Since $J_{P_n}$ is complete intersection, the Koszul complex $(\mathbf{F}.,d^{\mathbf{F}}.)$ gives the
		minimal free resolution for $S/J_{P_n}$. 
		Let $\{e_{\{i,j\},\{k,l\}} \mid \{i,j\}\neq \{k,l\} \in E(P_n)\}$ denote the
		standard basis of $S^{\binom{n-1}{2}}$ and $\{e_{\{j,j+1\}} \mid 1 \leq j \leq n-1\}$ denote the standard basis of $S^{n-1}$.
		Set $d^{\mathbf{F}}_1(e_{\{j,j+1\}})=f_{j,j+1}$ for $1\leq j\leq n-1$ and $d^{\mathbf{F}}_2(e_{\{i,j\},\{k,l\}})=f_{k,l}e_{\{i,j\}}-f_{i,j}e_{\{k,l\}}$ for $\{i,j\}\neq \{k,l\} \in E(P_n)$.
		It follows from \cite[Theorem 3.7]{FM} that
		$$J_{P_n}:f_{1,n}=J_{P_n}+(y_2\cdots
		y_{n-1},x_2y_3\cdots y_{n-1}, \ldots, x_2\cdots x_{n-1}).$$
		Let $(\mathbf{G}.,d^{\mathbf{G}}.)$ be the minimal resolution of
		$\frac{S}{(J_{P_n}:f_{1,n})}(-2)$
		with the differential maps
		given by $d^{\mathbf{G}}_1(E_{i,i+1})=f_{i,i+1}$ for $1\leq i\leq n-1$ and
		$d^{\mathbf{G}}_1(E_{m})=x_2\cdots x_my_{m+1}\cdots y_{n-1}$ for $1\leq
		m\leq n-1$, where $\{E_{i,i+1},E_m : 1\leq i\leq n-1,1\leq m\leq n-1 \}$
		denotes the standard basis of $G_1$. 
		Clearly the map from $G_0$ to $F_0$ in the mapping cone complex is the
		multiplication by $f_{1,n}$. Define the map $\varphi_1:
		G_1\longrightarrow F_1$ by 
		\[
		\begin{array}{ll}
		\varphi_1(E_{i,i+1})=f_{1,n}e_{\{i,i+1\}} & 1 \leq i \leq n-1,\\
		\varphi_1(E_m) =\sum_{k=1}^{n-1} (b_m)_k e_{\{k,k+1\}} & 1 \leq m \leq n-1,
		\end{array}
		\]
		where $(b_m)_k$'s are as defined in the statement of the Theorem.
		We show that the map $\varphi_1$ satisfies the property that for all
		$x\in G_1,$
		$d^{\mathbf{F}}_1(\varphi_1(x))=f_{1,n}\cdot d^{\mathbf{G}}_1(x).$
		It is enough to prove the property for the basis elements. Clearly
		$d^{\mathbf{F}}_1(\varphi_1(E_{\{i,i+1\}}))=f_{1,n}f_{i,i+1}=f_{1,n}\cdot
		d^{\mathbf{G}}_1(E_{\{i,i+1\}})$.  Now
		$d^{\mathbf{F}}_1(\varphi_1(E_1))=d^{\mathbf{F}}_1\left(\sum_{k=1}^{n-1}(b_1)_k e_{\{k,k+1\}} \right)=\sum_{k=1}^{n-1}\frac{Y}{y_ky_{k+1}}f_{k,k+1}.$
		Note that
		$\frac{f_{k,k+1}}{y_ky_{k+1}} =\frac{x_k}{y_k} -\frac{x_{k+1}}{y_{k+1}}$. Now, taking the summation over $k$, we get $d^{\mathbf{F}}_1(\varphi_1(E_1))=f_{1,n}(y_2\cdots y_{n-1})=f_{1,n}\cdot d^{\mathbf{G}}_1(E_1)$. 
		Let $m\geq 2$. Then,  $d^{\mathbf{F}}_1(\varphi_1(E_m))=d^{\mathbf{F}}_1\left(\sum_{k=1}^{n-1}(b_m)_ke_{\{k,k+1\}}\right) =
		\sum_{k=1}^{n-1}(b_m)_kf_{k,k+1}$. It can be seen that
		\begin{eqnarray*}
			\sum_{k=1}^{m-1}(b_m)_kf_{k,k+1} & = & Y\left[\frac{x_2}{y_2}\cdots
			\frac{x_{m-1}}{y_{m-1}} \frac{x_{m+1}}{y_{m+1}}\left(\frac{x_1}{y_1}-\frac{x_m}{y_m}\right) \right], \\
			(b_m)_m f_{m,m+1} & = & Y \left[\frac{x_1}{y_1} \cdots
			\frac{x_{m-1}}{y_{m-1}}\left(	\frac{x_m}{y_m}-\frac{x_{m+1}}{y_{m+1}}\right) \right], \\
			\sum_{k=m+1}^{n-1}(b_m)_kf_{k,k+1} & = & Y\left[\frac{x_2}{y_2}\cdots
			\frac{x_m}{y_m} \left(\frac{x_{m+1}}{y_{m+1}}-
			\frac{x_{n}}{y_{n}}\right) \right].
		\end{eqnarray*}
		Summing up these three terms together, we get 
		\[d^{\mathbf{F}}_1(\varphi_1(E_m))=Y\left[\frac{x_2}{y_2}\cdots
		\frac{x_m}{y_m} \left( 
		\frac{x_{1}}{y_{1}} -\frac{x_n}{y_n}\right) \right]=x_2\cdots
		x_my_{m+1}\cdots y_{n-1}f_{1,n}=f_{1,n}\cdot d^{\mathbf{G}}_1(E_m).\] 
		Therefore, by the mapping cone construction, we get a presentation of
		$J_{C_n}$ as 
		$$ F_2\oplus G_1\longrightarrow F_1\oplus G_0\longrightarrow
		J_{C_n}\longrightarrow 0. $$ 
		Since $F_2\oplus G_1\simeq S^{{n\choose 2} +n-1}$
		and $F_1\oplus G_0\simeq S^n$ whose ranks coincide with the corresponding
		Betti numbers of $J_{C_n}$, we can conclude that this is a minimal presentation. Hence,   the
		first syzygy of $J_{C_n}$ is minimally generated by the images of the
		standard basis elements under the map $\Phi : F_2 \oplus G_1
		\rightarrow F_1 \oplus G_0$, where $\Phi = \begin{bmatrix} d_2^{\bf F}
		& \varphi_1 \\
		0 & -d_1^{\bf G} \end{bmatrix}$. Then,  we have
		\begin{eqnarray*}
			\Phi(e_{\{i,j\},\{k,l\}}) & = & d_2^{\bf F}(e_{\{i,j\},\{k,l\}}) =
			f_{k,l}e_{\{i,j\}} - f_{i,j}e_{\{k,l\}} \text{ for }\{i,j\} \neq
			\{k,l\} \in E(P_n),\\
			\Phi(E_{i,i+1}) & = &  (\varphi_1 - d_1^{\bf G})(E_{i,i+1}) = f_{1,n}e_{\{i,i+1\}} - f_{i,i+1} e_{\{1,n\}}
			\text{ for }i = 1, \ldots, n-1,  \text{ and }\\ 
			\Phi(E_m) & = & \varphi_1(E_m) - d_1^{\bf G}(E_1) = \sum_{k=1}^{n-1}
			(b_m)_ke_{\{k,k+1\}} - (b_m)_ne_{\{1,n\}} \text{ for }i = 1, \ldots, n-1.
		\end{eqnarray*}
		Hence,   the assertion follows.
	\end{proof}

	We now describe a minimal generating set for the
	first syzygy of binomial edge ideals of unicyclic graphs. The syzygy
	structure is slightly different for unicyclic graphs of girth $3$. We first
	deal with that case. 
\begin{theorem}\label{syzygy-unicyclic3}
Let $G$ be a unicyclic graph on $[n]$ of girth $3$. Denote the
vertices of the unique cycle of $G$ by $v_1 < v_2 < v_3$. Let
the standard basis of $S(-2)^n$ be denoted by $\{e_{\{i,j\}} ~: ~ \{i, j\}
\in E(G), i<j \}$. Then,  the first syzygy of $J_G$ is minimally generated by
the elements of the form
\begin{enumerate}
	\item[(a)] $x_{v_1}e_{\{v_2,v_3\}}
	-x_{v_2}e_{\{v_1,v_3\}}+x_{v_3}e_{\{v_1,v_2\}},
	y_{v_1}e_{\{v_2,v_3\}}-y_{v_2}e_{\{v_1,v_3\}}+y_{v_3}e_{\{v_1,v_2\}},$
	\item[(b)] $f_{i,j}e_{\{p,l\}} - f_{p,l}e_{\{i,j\}}, \text{ where } \{\{i,j\},\{p,l\}\}
	\not\subset \{\{v_1,v_2\}, \{v_1,v_3\}, \{v_2,v_3\}\}, \{i,j\} \neq\{p,l\}$ $ \text{ and }
	\{i,j\},\{p,l\} \in E(G)$,  
	
	\item[(c)]  $(-1)^{p_A(j)}f_{k,l}e_{\{i,j\}} + (-1)^{p_A(k)}
	f_{j,l}e_{\{i,k\}} + (-1)^{p_A(l)}f_{j,k}e_{\{i,l\}}, \text{ where }   A = \{i,j,k,l\} \in \mathcal{C}_G$ $ \text{ with center at } i.$
\end{enumerate}
\end{theorem}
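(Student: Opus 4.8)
The plan is to follow the template of the proof of Theorem \ref{syzygy-tree}, modified to absorb the two extra degree-three syzygies coming from the triangle. By Theorem \ref{betti-unicyclic3} we have $\beta_{2,3}(S/J_G)=2$, $\beta_{2,j}(S/J_G)=0$ for $j\notin\{3,4\}$, and the recorded value of $\beta_{2,4}(S/J_G)$, so the minimal presentation of $J_G$ has the shape
\[
S(-3)^{2}\oplus S(-4)^{\beta_{2,4}(S/J_G)}\stackrel{\varphi}{\longrightarrow} S(-2)^{n}\stackrel{\psi}{\longrightarrow} J_G\longrightarrow 0,\qquad \psi(e_{\{i,j\}})=f_{i,j}.
\]
First I would define $\varphi$ by sending the two degree-three basis vectors to the elements in (a) and the degree-four basis vectors to the elements in (b) and (c), and check $\psi\circ\varphi=0$. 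For (a) this is the identity $x_{v_1}f_{v_2,v_3}-x_{v_2}f_{v_1,v_3}+x_{v_3}f_{v_1,v_2}=0$ together with its $y$-analogue, i.e. the two linear syzygies of $J_{K_3}=J_{G[\{v_1,v_2,v_3\}]}$; for (b) it is the Koszul relation $f_{i,j}f_{p,l}-f_{p,l}f_{i,j}=0$; and for (c) it is the claw relation already verified in Theorem \ref{syzygy-tree}. Hence every listed element lies in the first syzygy $\ker\psi$.

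Next I would match the number of listed elements against $\beta_2(S/J_G)$. There are $2$ elements of type (a), $\binom{n}{2}-3$ of type (b) (all $\binom{n}{2}$ Koszul relations among the $n$ edges of $G$, minus the $3$ relations internal to the triangle, which are not minimal but follow from (a)), and $|\mathcal{C}_G|$ of type (c). The useful observation here is that $\sum_{v}\binom{\deg_G(v)}{3}$ overcounts the induced claws: since $G$ has a unique triangle, the only pair of neighbours of a vertex $v$ that is itself an edge is the triangle edge opposite $v$ when $v\in\{v_1,v_2,v_3\}$, so a triple of neighbours fails to be independent precisely for the $\deg_G(v_i)-2$ triples containing the other two triangle vertices. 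Thus $|\mathcal{C}_G|=\sum_{v}\binom{\deg_G(v)}{3}-\sum_{i=1}^{3}\bigl(\deg_G(v_i)-2\bigr)$, and adding the three counts reproduces exactly $2+\beta_{2,4}(S/J_G)=\beta_2(S/J_G)$ from Theorem \ref{betti-unicyclic3}.

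It then remains to show the listed elements minimally generate $\ker\psi$; since their number already equals $\beta_2(S/J_G)$, it suffices to prove they stay $\K$-linearly independent in $\ker\psi/\mathfrak{m}\ker\psi$, where $\mathfrak{m}$ is the graded maximal ideal of $S$, and the argument splits by degree. In degree $3$ the type-(a) elements $g_x,g_y$ are visibly $\K$-independent and $\dim_\K(\ker\psi)_3=\beta_{2,3}=2$, so they form a basis. The \emph{genuine obstacle} is degree $4$: because $(\ker\psi)_3\neq 0$, we have $(\mathfrak{m}\ker\psi)_4=S_1\cdot(\ker\psi)_3=S_1\cdot\langle g_x,g_y\rangle\neq 0$, so, unlike the tree case, one cannot merely show the type-(b),(c) elements are $\K$-independent inside $S(-2)^n$; one must show they are independent modulo $S_1\cdot\langle g_x,g_y\rangle$. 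The decisive structural fact is that $g_x,g_y$ involve only the three basis vectors $e_{\{v_1,v_2\}},e_{\{v_1,v_3\}},e_{\{v_2,v_3\}}$, so every element of $S_1\cdot\langle g_x,g_y\rangle$ is supported on these three triangle coordinates, whereas every type-(b) element (its defining pair is not contained in the triangle) and every type-(c) element (an induced claw meets the triangle in at most one edge) has a nonzero entry in some non-triangle coordinate. Projecting $S(-2)^n$ onto the non-triangle coordinates therefore annihilates $S_1\cdot\langle g_x,g_y\rangle$ while retaining all type-(b),(c) columns, and on these coordinates I would run the row-by-row argument of Theorem \ref{syzygy-tree} mutatis mutandis: in the row indexed by a non-triangle edge $\{i,j\}$, every nonzero entry has the form $\pm f_{k,l}$ with pairwise distinct $\{k,l\}$, hence these entries are $\K$-linearly independent and force the corresponding coefficients to vanish; ranging over all non-triangle edges kills every coefficient. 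This establishes the required independence, and the only place the proof departs from the tree case is precisely this degree-$4$ minimality, handled by separating the type-(b),(c) generators from the $S_1$-multiples of the triangle's linear syzygies via the non-triangle-coordinate projection.
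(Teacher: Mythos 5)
Your proposal is correct, but it proves the theorem by a genuinely different route than the paper. The paper argues by induction on $n$: the base case $K_3$ is handled via the Eagon--Northcott complex, and the inductive step removes a pendant edge $e=\{u,v\}$ and applies the mapping cone to the sequence $0\to S/(J_{G\setminus e}:f_e)(-2)\to S/J_{G\setminus e}\to S/J_G\to 0$, constructing an explicit lifting $\varphi_1$ (verified via the claw identity), splitting into two cases according to whether $v$ lies on the triangle, and certifying minimality by matching $\rank(F_2\oplus G_1)$ with $\beta_2(S/J_G)$. You instead give a direct, non-inductive argument in the style of the paper's own Theorem \ref{syzygy-tree}: exhibit the listed elements as syzygies, count them against $\beta_2(S/J_G)$ from Theorem \ref{betti-unicyclic3} (your inclusion--exclusion $|\mathcal{C}_G|=\sum_v\binom{\deg_G(v)}{3}-\sum_{i=1}^3(\deg_G(v_i)-2)$ is exactly right for a girth-$3$ unicyclic graph, and the totals match), and then conclude by graded Nakayama after checking independence in $\ker\psi/\mathfrak{m}\ker\psi$. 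You correctly isolate the one point where the tree argument breaks --- $(\mathfrak{m}\ker\psi)_4=S_1\cdot\langle g_x,g_y\rangle\neq 0$ --- and your fix is sound: since $g_x,g_y$ are supported only on the three triangle coordinates while every type-(b) element (by the non-containment hypothesis) and every type-(c) element (an induced claw contains at most one triangle edge) has a nonzero entry $\pm f_{k,l}$ in some non-triangle row, with pairwise distinct pairs $\{k,l\}$ within each row, projecting away the triangle coordinates reduces the question to the row-by-row independence argument of the tree case. What your approach buys is a self-contained proof with no case analysis and no need to build lifting maps; what the paper's approach buys is uniformity with the mapping-cone machinery used throughout Section 3, which also produces the generators structurally rather than by guess-and-verify. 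Both proofs lean equally on the Betti number computation of Theorem \ref{betti-unicyclic3}.
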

\begin{proof}
		We proceed by induction on $n = |V(G)| = |E(G)|$. For $n=3$, $G$ is a
		complete graph i.e., $J_G$ is the ideal generated by the set of all
		$2\times 2$ minor of a $2\times 3$ matrix. Then,  it follows from
		Eagon-Northcott complex that the first syzygy of $J_G$ is minimally
		generated by $$\left\{ x_{v_1}e_{\{v_2,v_3\}}
		-x_{v_2}e_{\{v_1,v_3\}}+x_{v_3}e_{\{v_1,v_2\}},
		y_{v_1}e_{\{v_2,v_3\}}-y_{v_2}e_{\{v_1,v_3\}}+y_{v_3}e_{\{v_1,v_2\}}
		\right\}.$$ 
		
		Now, we assume that $n>3$. From Theorem
		\ref{betti-unicyclic3}, we know that the minimal presentation of $J_G$ is of the form
		\[  S(-4)^{\beta_{2,4}(S/J_G)}\oplus S(-3)^{\beta_{2,3}(S/J_G)} \stackrel{\varphi}{\longrightarrow}
		S(-2)^{n} \stackrel{\psi}{\longrightarrow} J_{G}\longrightarrow
		0, \]
		\[ \text{where} ~ \beta_{2,4}(S/J_G)={n \choose 2}+\sum_{v \in V(G)} {\deg_G(v)
			\choose 3}- \sum_{i=1,2,3}\deg_G(v_i)+3\text{ and }\; \beta_{2,3}(S/J_G)=2.\]
		Let $e=\{u,v\}$ be an edge in $G$ such that $u$ is a pendant vertex of $G$. Since $e$ is a cut edge and $u$ is a pendant vertex of $G$,
		$(G\setminus e)_e= (G \setminus u)_{v} \sqcup \{u\}$. Thus,
		$J_{G\setminus e} :f_e = J_{(G\setminus u)_{v}}$. Since $G\setminus
		e$ is also a unicyclic graph having the unique cycle of girth $3$ and $J_{G \setminus e} =
		J_{G \setminus u}$, by induction we get that the first syzygy of
		$J_{G\setminus e}$ is generated by elements of the form 
		\begin{enumerate}
			\item[(a)] $x_{v_1}e_{\{v_2,v_3\}}
			-x_{v_2}e_{\{v_1,v_3\}}+x_{v_3}e_{\{v_1,v_2\}},
			y_{v_1}e_{\{v_2,v_3\}}-y_{v_2}e_{\{v_1,v_3\}}+y_{v_3}e_{\{v_1,v_2\}}$,
			
			\item[(b)] $f_{i,j}e_{\{p,l\}} - f_{p,l}e_{\{i,j\}}, \text{ where } \{\{i,j\},\{p,l\}\}
			\not\subset \{\{v_1,v_2\}, \{v_1,v_3\}, \{v_2,v_3\}\}, \{i,j\} \neq\{p,l\}$ $ \text{ and }
			\{i,j\},\{p,l\} \in E(G\setminus e)$,
			
			\item[(c)] $(-1)^{p_A(j)}f_{k,l}e_{\{i,j\}} + (-1)^{p_A(k)}
			f_{j,l}e_{\{i,k\}} + (-1)^{p_A(l)}f_{j,k}e_{\{i,l\}}, \text{ where }
			A = \{i,j,k,l\} \in \mathcal{C}_{G \setminus e}$ $\text{ with center at } i.$
		\end{enumerate}
		
		\vskip 2mm
		\noindent
		\textbf{Case-1:} We assume that $v\neq v_i$ for all $1\leq i\leq 3$. 
		Now, we apply the mapping cone construction to the short exact
		sequence (\ref{main-ses}). Let $(\mathbf{G}.,d^{\mathbf{G}}.)$ be a
		minimal free resolution of $[S/(J_{G\setminus e} :f_e)](-2)$. Then
		$G_1 \simeq S^{|E(G)|-1+\binom{\deg_G(v)-1}{2}}.$ Also,     let
		$(\mathbf{F}.,d^{\mathbf{F}}.)$ be a minimal free resolution of
		$S/J_{G\setminus e}$. Then,  $F_1\simeq S^{|E(G)|-1}$ and
		$F_2 \simeq S^{\beta_{2}(S/J_{G\setminus e})}$. By Theorem
		\ref{betti-unicyclic3}, 
		$\beta_2(S/J_{G\setminus e})= 2+
		\beta_{2,4}(S/J_{G\setminus e}),$ where $\beta_{2,4}(S/J_{G\setminus
			e})={n-1 \choose 2}+\sum_{w \in V(G) \setminus v} {\deg_G(w)
			\choose 3}+\binom{\deg_G(v)-1}{3}$. Set $\mathcal{S}_1= \{E_{\{i,j\}}
		:  ~ \{i,j\}\in E(G \setminus u) \}$ and
		$\mathcal{S}_2= \{E_{\{i,j\}} : i,j \in N_G(v)\setminus u \}$.
		Then,  $|\mS_1| = |E(G \setminus e)| = n-1$ and $|\mS_2| = |E( (G\setminus
		e)_e) \setminus E(G \setminus e)| = {\deg_G(v) - 1 \choose 2}$. Let
		$\mathcal{S}_1\cup \mathcal{S}_2$ denote the standard basis of $G_1$
		and set
		$d_1^{\mathbf{G}}(E_{\{i,j\} })=f_{i,j}$ for $E_{\{i,j\}} \in
		\mathcal{S}_1 \cup \mathcal{S}_2$.
		Also, let $\{e_{\{i,j\}} : \{i,j\}\in E(G \setminus u) \}$ be the
		standard basis of $F_1$. By the mapping cone construction, the map
		from $G_0$ to $F_0$ is multiplication by $f_{u,v}$. Define
		$\varphi_1 :  G_1  \rightarrow F_1$ by 
		
		$
		\varphi_1(E_{\{i,j\} }) =  \left\{
		\begin{array}{ll}
		f_{u,v}\cdot e_{\{i,j\} } & \text{ if } E_{\{i,j\} }\in \mathcal{S}_1, \\
		(-1)^{p_A(j)+p_A(u)+1}f_{i,u}e_{\{j,v\} }+(-1)^{p_A(i)+p_A(u)+1}f_{j,u}e_{\{i,v\} }
		& \text{ if } E_{\{i,j\} }\in \mathcal{S}_2. 
		\end{array} \right.
		$
		
		Then,  to prove that $\varphi_1$ is
		a lifting map from $G_1$ to $F_1$ in the mapping cone construction, it
		is enough to show that the corresponding diagram commutes i.e.,
		$d_1^{\mathbf{F}}(\varphi_1(x))=f_{u,v} \cdot d_1^{\mathbf{G}}(x)$ for all
		$x\in G_1$. If $i, j \in N_G(v)\setminus u$, then
		$\{v,u,i,j\}$ is an induced claw with center $v$ and it can be easily seen that
		\[(-1)^{p_A(j)+p_A(u)+1}f_{i,u}f_{j,v}+(-1)^{p_A(i)+p_A(u)+1}f_{j,u}f_{i,v}-f_{i,j}f_{u,v}=0.\] 
		Therefore, it follows that for $E_{\{i,j\}} \in \mathcal{S}_1 \cup
		\mathcal{S}_2$, $d_1^{\mathbf{F}}(\varphi_1(E_{\{i,j\} }))=f_{u,v} \cdot
		d_1^{\mathbf{G}}(E_{\{i,j\} })$.
		Hence,   the mapping cone construction gives a $S$-free presentation
		of $J_G$, which is 
		\begin{eqnarray}\label{map-cone-ses}
		F_2\oplus G_1\longrightarrow F_1\oplus
		G_0\longrightarrow  J_G\longrightarrow 0. 
		\end{eqnarray}
		Since $F_2\oplus
		G_1\simeq S^{\beta_{2}(S/J_G)}$ and $F_1\oplus G_0 \simeq S^n$, the above
		presentation is a minimal one.
		\vskip 2mm \noindent
		\textbf{Case-2:} Let $v=v_i$ for some $1\leq i\leq 3$. Assume that
		$v = v_1$. Then,  $\{v_2,v_3\} \in E( (G\setminus e)_e) \cap
		E(G \setminus e)$. Hence,   $\beta_{1,2}(S/J_{(G\setminus u)_v}) = \rank
		G_1 = (n-1) + {\deg_G(v) - 1 \choose 2} - 1$.
		Also, it follows from Theorem \ref{betti-unicyclic3} that
		\begin{eqnarray*}
			\beta_2(S/J_{G\setminus e}) & = & 2 + 
			{n-1 \choose 2} + \sum_{x \in V(G) \setminus u} {\deg_{G\setminus e}(x) \choose 3}
			- \sum_{i=1}^3 \deg_{G\setminus e}(v_i) + 3. 
		\end{eqnarray*}
		Note that $\deg_G(v_1) = \deg_{G\setminus e}(v_1) + 1$ and
		$\deg_G(x) = \deg_{G \setminus e}(x)$ for all $x \neq u$ and $x \neq
		v$. Substituting these values in the above expression and taking
		summation with $\rank G_1$, we see that $\rank F_2 + \rank G_1 =
		\beta_2(S/J_G)$.
		Let $\mathcal{S}_1 = \{E_{\{i,j\}} :  ~ \{i,j\}\in E(G \setminus u)
		\}$ and $\mathcal{S}_2= \left\{E_{\{i,j\}} : i,j \in N_G(v)\setminus
		u, \{i,j\} \neq \{v_2, v_3\}\right\}.$
		Define $\varphi_1 : G_1 \longrightarrow F_1$ as in \textbf{Case-1} and
		proceeding as in there, it can be proved that the mapping
		cone construction gives a minimal $S$-free presentation of $J_G$ as in
		(\ref{map-cone-ses}). The first syzygy is minimally generated by the
		images of the standard basis under the map $\Phi : F_2 \oplus G_1
		\longrightarrow F_1 \oplus G_0$ which is given by the matrix
		$\begin{bmatrix} d_2^{\bf F} & \varphi_1 \\ 0 & -d_1^{\bf G}
		\end{bmatrix}$. Now, as done in the proof of Theorem
		\ref{syzygy-cycle}, one concludes that the images under $\Phi$ are
		precisely the elements given in the assertion of the theorem.
	\end{proof}
	
	\begin{theorem}\label{syzygy-unicyclic}
		Let $G$ be a unicyclic graph on $[n]$ of girth $m\geq
		4$. Also,     let the vertex set of the unique cycle in $G$ be $\{1,\ldots,m\}$.
		Let $\{e_{\{i,j\}} ~: ~ \{i, j\} \in E(G) \}$ denote the standard basis of $S^{n}$.	
		Then,  the first syzygy of $J_G$ is minimally generated by elements of
		the form
		\begin{enumerate}
			\item[(a)] $f_{i,j}e_{\{k,l\}} - f_{k,l}e_{\{i,j\}}$, where $
			\{i,j\},\{k,l\}\in E(G)$ and $\{i,j\} \neq\{k,l\}$,
			\item[(b)] $(-1)^{p_A(v)}f_{z,w}e_{\{u,v\}} + (-1)^{p_A(z)}f_{v,w}e_{\{u,z\}} + (-1)^{p_A(w)}f_{v,z}e_{\{u,w\}},$
			where
			$A=\{u,v,w,z\} \in \mathcal{C}_G$  with center at $u$,
			\item[(c)] $\sum_{k=1}^{m-1} (b_i)_ke_{\{k,k+1\}}-(b_i)_m
			e_{\{1,m\}}$, where $1 \leq i \leq m-1,$ and $b_i$'s are as
			defined in Theorem \ref{syzygy-cycle}.
		\end{enumerate}
	\end{theorem}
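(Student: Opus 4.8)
The plan is to induct on $n = |V(G)|$, using Theorem \ref{syzygy-cycle} as the base case and the mapping cone construction for the inductive step, closely following the strategy of Theorem \ref{syzygy-unicyclic3}. For the base case $n = m$, the graph $G$ is the cycle $C_m$, every vertex of which has degree $2$, so $\mathcal{C}_G = \emptyset$ and the type (b) generators are vacuous; the type (a) and type (c) generators are then exactly those produced by Theorem \ref{syzygy-cycle}. For $n > m$, since $G$ is unicyclic with more vertices than the cycle, there is a pendant vertex $u$, which necessarily lies off the cycle; fix the edge $e = \{u,v\}$. Then $G \setminus e = (G\setminus u)\sqcup\{u\}$ is unicyclic of girth $m$ on $n-1$ vertices with the same cycle on $\{1,\dots,m\}$, so $J_{G\setminus e} = J_{G\setminus u}$, and by the induction hypothesis its first syzygy is generated by the elements of type (a), (b), (c) associated with $G\setminus u$. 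Since $u$ is isolated in $G\setminus e$, no path joins $u$ and $v$ there, and \cite[Theorem 3.7]{FM} gives $J_{G\setminus e}:f_e = J_{(G\setminus u)_v}$.

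Next I would apply the mapping cone to the short exact sequence \eqref{main-ses}. Let $(\mathbf{F}.,d^{\mathbf{F}}.)$ resolve $S/J_{G\setminus e}$ minimally and $(\mathbf{G}.,d^{\mathbf{G}}.)$ resolve $[S/J_{(G\setminus u)_v}](-2)$ minimally. I would index the standard basis of $G_1$ by $\mathcal{S}_1 = \{E_{\{i,j\}} : \{i,j\}\in E(G\setminus u)\}$ (the ``old'' edges) and $\mathcal{S}_2 = \{E_{\{i,j\}} : i,j\in N_G(v)\setminus u\}$ (the clique edges newly created at $v$), with $d_1^{\mathbf{G}}(E_{\{i,j\}}) = f_{i,j}$. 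Because the girth is at least $4$, the two cycle-neighbours of $v$ (when $v$ lies on the cycle) are non-adjacent in $G$, so every element of $\mathcal{S}_2$ corresponds to a genuinely new edge; this avoids the exclusion needed in the girth-$3$ case and lets one treat the position of $v$ uniformly. I would then define the lifting map $\varphi_1 : G_1 \to F_1$ exactly as in Theorem \ref{syzygy-unicyclic3}, namely $\varphi_1(E_{\{i,j\}}) = f_{u,v}\,e_{\{i,j\}}$ on $\mathcal{S}_1$ and $\varphi_1(E_{\{i,j\}}) = (-1)^{p_A(j)+p_A(u)+1}f_{i,u}e_{\{j,v\}} + (-1)^{p_A(i)+p_A(u)+1}f_{j,u}e_{\{i,v\}}$ on $\mathcal{S}_2$.

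To verify that $\varphi_1$ is a genuine lifting I must check $d_1^{\mathbf{F}}(\varphi_1(x)) = f_{u,v}\,d_1^{\mathbf{G}}(x)$ on basis elements. On $\mathcal{S}_1$ this is immediate, and on $\mathcal{S}_2$ it reduces to the claw identity for the induced claw $A = \{v,u,i,j\}$ centred at $v$, namely $(-1)^{p_A(j)+p_A(u)+1}f_{i,u}f_{j,v} + (-1)^{p_A(i)+p_A(u)+1}f_{j,u}f_{i,v} - f_{i,j}f_{u,v} = 0$, the same binomial relation used in Theorems \ref{syzygy-tree} and \ref{syzygy-unicyclic3}. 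The mapping cone then yields the presentation $F_2\oplus G_1 \to F_1\oplus G_0 \to J_G\to 0$ with map $\Phi = \begin{bmatrix} d_2^{\mathbf{F}} & \varphi_1 \\ 0 & -d_1^{\mathbf{G}} \end{bmatrix}$.

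Finally I would establish minimality and read off the generators. Using Theorem \ref{betti-unicyclic}, I would compute $\rank(F_2\oplus G_1) = \beta_2(S/J_{G\setminus u}) + \beta_1(S/J_{(G\setminus u)_v})$ and check that it equals $\beta_2(S/J_G)$ (respecting the degree decomposition into $\beta_{2,4}$ and the extra $\beta_{2,m}=m-1$ when $m>4$), so that the presentation is minimal. The columns of $\Phi$ then split into three families: those coming from $F_2$ reproduce $d_2^{\mathbf{F}}$, i.e. the type (a), (b), (c) generators of $G\setminus u$ — in particular the $m-1$ cycle syzygies of type (c) are carried through the induction unchanged, since deleting the pendant $u$ leaves the cycle untouched; the columns from $\mathcal{S}_1$ give $f_{u,v}e_{\{i,j\}} - f_{i,j}e_{\{u,v\}}$, the new type (a) relations involving $e$; and the columns from $\mathcal{S}_2$ give the type (b) relations for the claws $\{v,u,i,j\}$ centred at $v$, the only induced claws created by reattaching $u$. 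Together these are exactly the generators claimed. I expect the main obstacle to be the rank bookkeeping that certifies minimality: one must account precisely for whether $v$ does or does not lie on the cycle, confirm via Theorem \ref{betti-unicyclic} that each new type (a) and type (b) relation is produced exactly once with no redundancy, and track the persistence of the degree-$m$ type (c) syzygies, with their count $m-1$, through every step of the induction.
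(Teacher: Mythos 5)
Your proposal is correct and follows essentially the same route as the paper: induction on the number of vertices beyond the cycle, base case given by Theorem \ref{syzygy-cycle}, and the inductive step via the mapping cone on the short exact sequence \eqref{main-ses} for a pendant edge, with the same lifting map $\varphi_1$, the same claw identity, and the same rank comparison against Theorem \ref{betti-unicyclic} to certify minimality. Your added remark that girth $\geq 4$ makes every element of $\mathcal{S}_2$ a genuinely new edge, so no case split on whether $v$ lies on the cycle is needed, is a correct and worthwhile clarification of why this proof is simpler than the girth-$3$ case.
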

	
\begin{proof}
We prove the assertion by induction on $n-m$. If $n=m$, then $G$ is a
cycle and the result follows from Theorem \ref{syzygy-cycle}. Now, we
assume that $n>m$. From Theorem \ref{betti-unicyclic}, we know that the minimal presentation of $J_G$ is of the form
\[  S^{\beta_2(S/J_G)} \longrightarrow
S^{n} \longrightarrow J_{G}\longrightarrow
0, \]
where
\[ \beta_2(S/J_G) = \left\{
\begin{array}{ll}
\beta_{2,4}(S/J_G) & \text{ if } m = 4 \\
\beta_{2,4}(S/J_G) + \beta_{2,m}(S/J_G) & \text{ if } m > 4,
\text{ and }
\end{array} \right. \]
\[
\beta_{2,4}(S/J_G)= \left\{
\begin{array}{ll}
{n \choose 2}+\sum_{v \in V(G)} {\deg_G(v)
	\choose 3}+ 3 & \text{ if } m = 4 \\
{n \choose 2}+\sum_{v \in V(G)} {\deg_G(v)
	\choose 3} & \text{ if } m > 4
\end{array} \right. \text{ and }
\beta_{2,m}(S/J_G)=m-1.\]
Let $e=\{u,v\}$ be an edge in $G$ such that $u$ is a pendant vertex
of $G$. Since $e$ is a cut edge and $u$ is a pendant vertex of $G$,
$(G\setminus e)_e= (G \setminus u)_{v} \sqcup \{u\}$. Thus,
$J_{G\setminus e} :f_e = J_{(G\setminus u)_{v}}$. Since $G\setminus
e$ is also a unicyclic graph having the unique cycle $C_m$ and $J_{G \setminus e} =
J_{G \setminus u}$, by induction we get a minimal generating set
of the first syzygy of $J_{G\setminus e}$ as 

\noindent
\begin{enumerate}
	\item[(a)] $f_{i,j}e_{\{k,l\}} - f_{k,l}e_{\{i,j\}}$, where $\{i,j\},\{k,l\}\in E(G \setminus e)$ and $\{i,j\} \neq\{k,l\}$, 
	\item[(b)] $(-1)^{p_A(j)}f_{k,l}e_{\{i,j\}} + (-1)^{p_A(k)}f_{j,l}e_{\{i,k\}} + (-1)^{p_A(l)}f_{j,k}e_{\{i,l\}},$
	where
	$A=\{i,j,k,l\} \in \mathcal{C}_{G\setminus e}$  with center at $i$,
	\item[(c)] $\sum_{k=1}^{m-1} (b_i)_ke_{\{k,k+1\}}-(b_i)_m
	e_{\{1,m\}}$, where $1 \leq i \leq m-1.$
\end{enumerate}

Now, we apply the mapping cone construction to the short exact sequence
(\ref{main-ses}). Let $(\mathbf{G}.,d^{\mathbf{G}}.)$ and $(\mathbf{F}.,d^{\mathbf{F}}.)$ be minimal
free resolutions of $[S/(J_{G\setminus e} :f_e)](-2)$ and $S/J_{G\setminus e}$
respectively. Then,  $G_1\simeq S^{n-1+\binom{\deg_G(v)-1}{2}}, F_1 \simeq S^{n-1}$ and
$F_2 \simeq S^{\beta_{2}(S/J_{G\setminus e})}$. 

Denote the standard basis of $G_1$ by $\mathcal{S}_1\cup
\mathcal{S}_2$, where
$\mathcal{S}_1= \{E_{\{i,j\}} : \{i,j\}\in E(G \setminus e) \}$ and $\mathcal{S}_2= \{E_{\{k,l\}} : k,l \in
N_G(v)\setminus u \}$. Note that $|\mS_1| = n-1$ and $|\mS_2| = {\deg_G(v)
	- 1 \choose 2}$.  Set $d_1^{\mathbf{G}}(E_{\{i,j\}
})=f_{i,j}$ for a basis element $E_{\{i,j\} }$. Also,     let
$\{e_{\{i,j\}} : \{i,j\}\in E(G \setminus e) \}$
be the standard basis of $F_1$. By the mapping cone construction, the
map from $G_0$ to $F_0$ is given by the multiplication by $f_e$. Now, we
define $\varphi_1$ from $G_1$ to $F_1$ by $\varphi_1(E_{\{i,j\}
})=f_e\cdot e_{\{i,j\} }$ for $E_{\{i,j\} }\in \mathcal{S} _1$ and
$\varphi_1(E_{\{k,l\} })=(-1)^{p_A(k)+p_A(u)+1}f_{u,l}e_{\{v,k\}
}+(-1)^{p_A(l)+p_A(u)+1}f_{u,k}e_{\{v,l\} }$ for
$E_{\{k,l\} }\in \mathcal{S}_2$. We need to prove that 
$d_1^{\mathbf{F}}(\varphi_1(x))=f_e \cdot d_1^{\mathbf{G}}(x)$ for
any element $x\in G_1$. For a claw $\{v,u,k,l\}$ with center at $v$,
we have the relation $(-1)^{p_A(k)+p_A(u)+1}f_{u,l}f_{v,k}
+(-1)^{p_A(l)+p_A(u)+1}f_{u,k}f_{v,l} = f_{k,l}f_{u,v}.$
This yields us the equality
$d_1^{\mathbf{F}}(\varphi_1(E_{\{i,j\} }))=f_{u,v} \cdot
d_1^{\mathbf{G}}(E_{\{i,j\} })$ for a basis $E_{\{i,j\} }$ of $G_1$.
So the mapping cone construction gives us a $S$-free presentation of
$J_G$ as
$$ F_2\oplus G_1\longrightarrow F_1\oplus G_0\longrightarrow F_0\longrightarrow J_G\longrightarrow 0.$$
Since $F_2\oplus
G_1 \simeq S^{\beta_{2}(S/J_G)}$ and $F_1\oplus G_0 \simeq S^n$, this is a minimal
free presentation. Hence,   the first syzygy of $J_G$ is minimally
generated by the images of basis elements under the map $\Phi: F_2
\oplus G_1 \longrightarrow F_1 \oplus G_0$. Now, the assertion can be
proved just as done in the proof of Theorem \ref{syzygy-cycle}.
\end{proof}

If $e = \{u, v\}$ is a cut-edge in $G$ such that both $u$ and $v$ are
simplicial vertices, then the mapping cone construction on the exact
sequence (\ref{main-ses}) gives a minimal free resolution of $S/J_G$, \cite[Proposition 3.2]{KM-pure}.
However, this is not a necessary condition as we see below.
\begin{proposition}  
Let $n \geq 3$. Then, the minimal free resolution of $S/J_{K_{1,n}}$ is given by the mapping
cone of $S/J_{K_{1,n-1}}$ and $S/J_{K_n}(-2)$. 
\end{proposition}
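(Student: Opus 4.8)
The statement asserts that the mapping cone construction applied to the exact sequence \eqref{main-ses} with the distinguished edge $e = \{u,v\}$, where $u$ is the center of the star $K_{1,n}$, produces a \emph{minimal} free resolution of $S/J_{K_{1,n}}$. The plan is to identify the two ingredients of the mapping cone explicitly and then verify minimality by a Betti-number count, exactly in the style of the previous theorems. First I would set up the exact sequence \eqref{main-ses} for $G = K_{1,n}$ with $v$ a chosen leaf and $e = \{u,v\}$ the edge joining it to the center $u$. Since $e$ is a cut-edge and $v$ is a pendant vertex, the general computation used repeatedly above gives $(G \setminus e)_e = (G \setminus v)_u \sqcup \{v\}$, and by \cite[Theorem 3.7]{FM} we get $J_{G\setminus e} : f_e = J_{(G\setminus v)_u}$. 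Here $G \setminus e = K_{1,n-1} \sqcup \{v\}$, so $J_{G \setminus e} = J_{K_{1,n-1}}$, while $(G \setminus v)_u$ adds all edges among the neighbours of $u$; since in the star every other vertex is adjacent to $u$, this completes the remaining $n-1$ leaves into a clique together with $u$, i.e.\ $(G\setminus v)_u = K_n$. Thus the relevant short exact sequence is
\[
0 \longrightarrow \frac{S}{J_{K_n}}(-2) \stackrel{\cdot f_e}{\longrightarrow} \frac{S}{J_{K_{1,n-1}}} \longrightarrow \frac{S}{J_{K_{1,n}}} \longrightarrow 0,
\]
which is precisely the sequence whose mapping cone the statement refers to.

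With this identification, the mapping cone of minimal resolutions of $S/J_{K_{1,n-1}}$ and $S/J_{K_n}(-2)$ is \emph{some} free resolution of $S/J_{K_{1,n}}$; the content is that it is minimal. The approach I would take is induction on $n$, the base case $n = 3$ being handled directly (there $K_{1,2} = P_3$ has a Koszul resolution and $K_3$ has the Eagon--Northcott/Hilbert--Burch resolution, so the cone is easily checked to be minimal). For the inductive step, minimality of a mapping cone is equivalent to the comparison maps $\varphi_i$ carrying no units, which in turn is equivalent to the numerical identity
\[
\beta_{i,j}\!\left(S/J_{K_{1,n}}\right) = \beta_{i,j}\!\left(S/J_{K_{1,n-1}}\right) + \beta_{i-1,j-2}\!\left(S/J_{K_n}\right)
\]
holding for all $i,j$: if the ranks at each homological and internal degree add up exactly, then no cancellation occurs and the cone is already minimal. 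So I would reduce the whole statement to verifying this additivity of graded Betti numbers.

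To verify the additivity I would use known resolutions of both end terms. The graded Betti numbers of $S/J_{K_n}$ are completely known (the Eagon--Northcott complex resolves the ideal of $2 \times 2$ minors of a generic $2 \times n$ matrix, a determinantal ideal), and those of $S/J_{K_{1,n-1}}$ are available by induction. Comparing internal degrees makes the split clean: the generators $f_e$ of $J_{K_{1,n}}$ corresponding to the $n$ star-edges are the $n-1$ star generators of $K_{1,n-1}$ together with one new generator $f_{u,v}$, matching $\beta_{1}(S/J_{K_{1,n-1}}) + \beta_{0}(S/J_{K_n}(-2))$; and in higher homological degree the linear (degree-$2$–shifted) strand coming from $S/J_{K_n}(-2)$ lands in internal degrees disjoint from, or adding cleanly to, those of the tree part, because the Eagon--Northcott strand of $K_n$ is resolved in a predictable range of degrees. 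The main obstacle I anticipate is precisely the bookkeeping at this step: one must check that for every $(i,j)$ there is genuinely no overlap forcing a unit entry in $\varphi_i$, i.e.\ that the shifted Betti table of $S/J_{K_n}$ and the Betti table of $S/J_{K_{1,n-1}}$ occupy compatible positions so their sum equals the Betti table of $S/J_{K_{1,n}}$. Once this additivity is established degree by degree, the ranks of the free modules in the mapping cone agree with the Betti numbers of $S/J_{K_{1,n}}$, whence the cone is a minimal free resolution and the proposition follows.
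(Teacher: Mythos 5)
Your proposal follows the paper's proof essentially verbatim: the same short exact sequence with $J_{K_{1,n-1}}:f_e = J_{K_n}$, the same reduction of minimality to the additivity $\beta_{i,j}(S/J_{K_{1,n}})=\beta_{i,j}(S/J_{K_{1,n-1}})+\beta_{i-1,j-2}(S/J_{K_n})$, and the same strand-position bookkeeping to justify it. The paper makes that last step precise by quoting $\reg(S/J_{K_{1,n}})=2$, $\reg(S/J_{K_n})=1$, $\pd(S/J_{K_{1,n}})=n$ and the vanishing of the linear strand of the star, which forces the connecting maps in the long exact sequence of $\Tor$ to vanish --- exactly the degree comparison you sketch, so no gap remains.
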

\begin{proof}
Let $V(K_{1,n}) = \{1,\ldots, n,n+1\}$ with $E(K_{1,n}) = \{\{i,n+1\}: 1
\leq i \leq n \}$. For $G = K_{1,n}$ and $e=\{n,n+1\}$, 
note that $J_{G\setminus e} = J_{K_{1,n-1}}$ and $J_{K_{1,n-1}} :f_e=J_{K_n}$.
Since $K_{1,n}$ is a tree, it follows from \cite[Theorem 1.1]{her1}
that $\pd(S/J_{K_{1,n}})=n$. Also, by \cite[Corollary 2.3]{KM12},
$\beta_{i,i+1}(S/J_{K_{1,n}})=0$ for $2 \leq i \leq n$. 
Since $\reg(S/J_{K_{1,n}})=2$, \cite{SZ14}, and $\reg(S/J_{K_n})=1$,
\cite{KM12},
$\beta_{i,i+j}(S/J_{K_{1,n}})=0$ for $j \neq 2$ and
$\beta_{i,i+j}(S/J_{K_n})=0$ for $j \neq 1$.
Corresponding to (\ref{main-ses}), we have the long exact sequence for
all $j \geq 1$,
\begin{equation}\nonumber
\cdots	\rightarrow
\Tor_{i,i+j}^S\left(\frac{S}{J_{K_{1,n-1}}},\KK\right)\rightarrow
\Tor_{i,i+j}^S\left(\frac{S}{J_{K_{1,n}}},\KK\right) \rightarrow
\Tor_{i-1,i+j-2}^S\left(\frac{S}{J_{K_n}},\KK\right)\rightarrow\cdots.
\end{equation}
Hence,
$\beta_{i,j}(S/J_{K_{1,n}})=\beta_{i,j}(S/J_{K_{1,n-1}})+\beta_{i-1,j-2}(S/J_{K_n})$.
If $\mathbf{G_\cdot}$ denotes a minimal free resolution of $S/J_{K_n}(-2)$ and
$\mathbf{F_\cdot}$ denotes a minimal free resolution of $S/J_{K_{1,n-1}}$,
then the above equality implies that $\beta_{i}(S/J_{K_{1,n}}) =
\rank F_i + \rank G_{i-1}$.  Hence, the mapping cone gives a minimal
free resolution of $S/J_{K_{1,n}}$.
\end{proof}
	
\section{Rees Algebra}
Let $G$ be a graph on $[n]$ and $J_{G}$ be its binomial edge ideal.
Let
$R=S[T_{\{i,j\}} ~ : ~\{i,j\} \in E(G) \text{ with } i<j]$.  
Let $\delta : R \to S[t]$ be the $S$-algebra homomorphism given by
$\delta(T_{\{i,j\}}) = f_{i,j}t$. Then,   Im$(\delta) = \R(J_G)$ and $\ker (\delta)$ is
called the defining ideal of $\R(J_G)$. 
We first characterize graphs whose binomial edge ideals are 
almost complete intersection. We begin by proving couple of simple
lemmas which are useful for our main results.
	
\begin{lemma}\label{radical-lemma}
Let $I$ be a radical ideal in a Noetherian commutative ring $A$. Then,
for any $f \in A$ and $ n \geq 2$, $I:f =I:f^n$.
\end{lemma}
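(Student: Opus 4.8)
The plan is to prove the two inclusions separately, the easy one first. The containment $I:f \subseteq I:f^n$ is immediate: if $g \in I:f$, then $gf \in I$, and since $I$ is an ideal, $gf^n = (gf)f^{n-1} \in I$, so $g \in I:f^n$. This inclusion uses only that $A$ is a commutative ring and does not require radicality.

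The substance of the statement is the reverse inclusion $I:f^n \subseteq I:f$, and this is where radicality enters. I would start with an arbitrary $g \in I:f^n$, so that $gf^n \in I$ by definition, and aim to show $gf \in I$. The key step is the elementary algebraic identity
\[
(gf)^n = g^n f^n = g^{n-1}\,(gf^n).
\]
Since $gf^n \in I$ and $I$ is an ideal, the right-hand side $g^{n-1}(gf^n)$ lies in $I$, and hence $(gf)^n \in I$.

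Finally I would invoke the hypothesis that $I$ is radical: from $(gf)^n \in I$ we get $gf \in \sqrt{I} = I$, which is precisely the assertion that $g \in I:f$. Combining the two inclusions yields $I:f = I:f^n$ for every $f \in A$ and every $n \geq 2$.

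There is no serious obstacle here; the only nontrivial ingredient is recognizing the factorization $(gf)^n = g^{n-1}(gf^n)$, which converts the hypothesis $gf^n \in I$ into the statement that a power of $gf$ lies in $I$, after which radicality does the rest. I would also note that the restriction $n \geq 2$ is only for the statement to be nonvacuous, as the case $n=1$ is a tautology and the argument above works verbatim for all $n \geq 1$.
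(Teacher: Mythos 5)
Your proof is correct and follows the same route as the paper: the paper also multiplies $gf^n \in I$ by $g^{n-1}$ to get $(gf)^n = g^nf^n \in I$ and then uses $\sqrt{I}=I$. No issues.
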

\begin{proof}
Let $f \in A$ be an element. Observe that for any $ n\geq 2$, $I:f
\subset I:f^n$. Let $g \in I:f^n$. Then, $gf^n \in I$ which implies
that $g^nf^n \in I$. Therefore, $gf \in \sqrt{I} = I$. Hence, $g \in I:f$. 
\end{proof}

\begin{lemma}\label{aci-lemma}
If $I \subseteq A = \K[t_1, \ldots, t_n]$ is a homogeneous ideal such
that $I = J + (a)$, where $J$ is generated by a homogeneous
regular sequence, $a$ is a homogeneous element and $J : a = J :
a^2$, then $I$ is either a complete intersection or an almost
complete intersection.
\end{lemma}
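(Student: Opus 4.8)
The plan is to analyze the number of minimal generators of $I$, since $I$ is radical (being $J+(a)$ with the hypothesis $J:a = J:a^2$) and the distinction between complete intersection and almost complete intersection is precisely whether $\mu(I) = \h(I)$ or $\mu(I) = \h(I)+1$. Write $J = (a_1, \ldots, a_r)$ where $a_1, \ldots, a_r$ is a homogeneous regular sequence, so $\h(J) = r$ and $J$ is a complete intersection of height $r$. There are two cases to consider: either $a \in J$, in which case $I = J$ is already a complete intersection and we are done, or $a \notin J$, in which case $\mu(I) = r+1$ and we must show that the height jumps to exactly $r$, i.e. $\h(I) = r$, which forces $I$ to be an almost complete intersection (modulo checking the localization condition at minimal primes).

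First I would dispose of the trivial case $a \in J$. Assuming $a \notin J$, the generating set $a_1, \ldots, a_r, a$ is minimal, so $\mu(I) = r+1$. The key step is to pin down $\h(I)$. Since $J \subseteq I$, we have $\h(I) \geq \h(J) = r$; and since $I$ is generated by $r+1$ elements, Krull's height theorem gives $\h(I) \leq r+1$. So $\h(I) \in \{r, r+1\}$. The hard part is to rule out $\h(I) = r+1$: if $\h(I) = r+1 = \mu(I)$, then $I$ itself would be a complete intersection, and I would need to show this cannot coexist with the hypotheses in a way that contradicts $a \notin J$. I would argue instead by directly establishing $\h(I) = r$. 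The condition $J : a = J : a^2$ is the crucial input: combined with the short exact sequence
\[
0 \longrightarrow \frac{A}{J:a}(-\deg a) \stackrel{\cdot a}{\longrightarrow} \frac{A}{J} \longrightarrow \frac{A}{I} \longrightarrow 0,
\]
it controls the associated primes of $A/I$. Because $J$ is a complete intersection, $A/J$ is Cohen-Macaulay and unmixed of dimension $n-r$, so all its associated primes have height exactly $r$. The element $a$ is a zerodivisor on $A/J$ precisely when $a$ lies in some minimal prime of $J$; the condition $J:a = J:a^2$ guarantees that passing to $I = J + (a)$ does not create embedded components and that $A/I$ picks up exactly those primes of $A/J$ containing $a$ (which still have height $r$) together with nothing of strictly larger height arising from the regular part.

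To make the height computation precise, I would localize. Let $\mathfrak{p}$ be a minimal prime of $I$. I want to show $\h(\mathfrak{p}) = r$. If $a \notin \mathfrak{p}$ is impossible since $\mathfrak{p} \supseteq I \ni a$, so $a \in \mathfrak{p}$; then $\mathfrak{p} \supseteq J$ as well. Consider the decomposition of the minimal primes of $J$ into those containing $a$ and those not containing $a$: the minimal primes of $I = J + (a)$ that have height $r$ are exactly the minimal primes of $J$ that contain $a$, and the condition $J:a = J:a^2$ (equivalently, $a$ is not in the square of any associated prime direction, a consequence of $J$ being radical-like in the relevant localization) ensures no minimal prime of $I$ has height $r+1$. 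More carefully, I would show that at each minimal prime $\mathfrak{p}$ of $I$, the localization $J_\mathfrak{p}$ is either the whole ring (if $\mathfrak{p}$ does not contain $J$, impossible here) or $a$ becomes a nonzerodivisor modulo $J_\mathfrak{p}$ contradicting $\mathfrak{p} \supseteq I$ unless $\mathfrak{p}$ already contains a height-$r$ minimal prime of $J$. This yields $\h(I) = r$, whence $\mu(I) = r+1 = \h(I)+1$. The main obstacle I anticipate is cleanly ruling out that $a$ could cut down the dimension by adding an independent relation (the $\h(I) = r+1$ scenario); the hypothesis $J:a = J:a^2$ is exactly what prevents this, since it says $a$ does not act as a genuine nonzerodivisor on any embedded or top-dimensional piece, so once I interpret this colon condition correctly via the above exact sequence and the unmixedness of $A/J$, the conclusion that $\h(I) = r$ follows.
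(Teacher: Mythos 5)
Your argument does not go through, and the gaps are substantive rather than cosmetic. (For comparison: the paper does not prove this lemma from scratch either; it observes that the statement is the graded analogue of \cite[Theorem 4.7(ii)]{HMV89} and that the local proof carries over to homogeneous ideals.) First, your claim that $a \notin J$ forces $\mu(I) = r+1$ is false: take $J = (t_1t_2)$ and $a = t_1$; then $J : a = J : a^2 = (t_2)$ and $a \notin J$, yet $I = (t_1)$ has $\mu(I) = 1$. Second, your stated goal of ``ruling out $\h(I) = r+1$'' and ``directly establishing $\h(I) = r$'' is both unachievable and unnecessary: for $J = (t_1)$, $a = t_2$ the hypotheses hold, $\h(I) = r+1$, and $I$ is a complete intersection --- a permitted conclusion, not a case to be excluded. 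The correct dichotomy is by whether $\bar a$ is a nonzerodivisor on $A/J$: if so, $a_1,\ldots,a_r,a$ is a regular sequence and $I$ is a complete intersection; if not, $a$ lies in an associated prime of $J$, which by unmixedness of the Cohen--Macaulay ring $A/J$ is a minimal prime of height $r$, whence $\h(I) = r$ and $\mu(I) \le r+1$.

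The more serious omission is that the paper's definition of almost complete intersection is not merely the numerical condition $\mu(I) = \h(I)+1$; it also requires $I_{\mathfrak{p}}$ to be a complete intersection for \emph{every} minimal prime $\mathfrak{p}$ of $I$. Your proof stops at the numerical count and never verifies this, yet this local condition is precisely where the hypothesis $J : a = J : a^2$ must do its real work: one localizes at a minimal prime $\mathfrak{p}$ of $I$, uses that the colon condition is preserved under localization, and deduces that $I_{\mathfrak{p}}$ is generated by a regular sequence. Your informal readings of the colon hypothesis are also inaccurate as stated --- for instance, $I$ need not be radical under these hypotheses ($J = (t_1^2)$, $a = t_2$ satisfies $J : a = J : a^2$ and $I = (t_1^2, t_2)$ is not radical). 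As written, your argument at best bounds $\mu(I) \le \h(I) + 1$ in the zerodivisor case; it does not establish the lemma.
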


\begin{proof}
The proof for Theorem 4.7(ii) in \cite{HMV89} is for the local case for
the same statement, but it can be easily seen that it goes through for
homogeneous ideals in $A$.
\end{proof}

We first characterize the trees whose binomial edge ideals are
almost complete intersections.
\begin{theorem}\label{tree-aci}
If $G$ is a tree which is not a path, then $J_G$ is an almost complete
intersection ideal if and only if $G$ is obtained by adding an edge
between two vertices of two paths. 
\end{theorem}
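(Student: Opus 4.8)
The plan is to reduce the statement to a comparison between the number of minimal generators and the height of $J_G$. Since a tree on $[n]$ has exactly $n-1$ edges, $\mu(J_G) = n-1$, while Theorem \ref{tech-thm} controls the height: because $J_G = \bigcap_T P_T(G)$, every $P_T(G)$ contains $J_G$, so $\h(J_G) \le \h(P_T(G)) = n + |T| - c_T$ for each $T \subseteq [n]$, and the minimum of the right-hand side over all $T$ is attained at a minimal prime and hence equals $\h(J_G)$. Thus whether $\mu(J_G) = \h(J_G)+1$ holds is governed by how small $n+|T|-c_T$ can be made, and the entire argument reduces to computing $c_T$ for a handful of well-chosen sets $T$, using the elementary fact that deleting a vertex of degree $d$ from a tree produces exactly $d$ components.

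For the ``if'' direction I would write $G = (P \sqcup Q)\cup\{e\}$, where $P,Q$ are the two vertex-disjoint paths and $e$ is the added edge, so that $G\setminus e = P\sqcup Q$ and $J_{G\setminus e} = J_P + J_Q$. Since $P$ and $Q$ are paths, each of $J_P,J_Q$ is a complete intersection by \cite{her1}, and as they involve disjoint sets of variables their generators form a single homogeneous regular sequence; hence $J_{G\setminus e}$ is generated by a regular sequence and $J_G = J_{G\setminus e} + (f_e)$. As $J_{G\setminus e}$ is radical (Theorem \ref{tech-thm}(a)), Lemma \ref{radical-lemma} gives $J_{G\setminus e}:f_e = J_{G\setminus e}:f_e^2$, so Lemma \ref{aci-lemma} applies and forces $J_G$ to be either a complete intersection or an almost complete intersection. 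To rule out the first alternative, observe that $G$, not being a path, has a vertex $v$ of degree $3$; taking $T=\{v\}$ gives $c_T = 3$, so $\h(J_G)\le \h(P_{\{v\}}(G)) = n-2 < n-1 = \mu(J_G)$, and $J_G$ is not a complete intersection. Therefore $J_G$ is an almost complete intersection.

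For the ``only if'' direction I would argue by contraposition: assuming $G$ is a tree, not a path, and \emph{not} obtained by adding an edge between two paths, I exhibit a set $T$ with $\h(P_T(G))\le n-3$, which yields $\mu(J_G)-\h(J_G)\ge 2$ and hence that $J_G$ is not an almost complete intersection. The key combinatorial observation is that the trees, other than paths, obtained by adding an edge between two paths are precisely those in which every vertex of degree $\ge 3$ has degree exactly $3$ and either there is exactly one such vertex or there are exactly two and they are adjacent. If $G$ does not belong to this family, then either (i) some vertex $v$ satisfies $\deg_G(v)\ge 4$, in which case $T=\{v\}$ gives $c_T = \deg_G(v)$ and $\h(P_T(G)) = n+1-\deg_G(v)\le n-3$; or (ii) there are two non-adjacent vertices $u,v$ with $\deg_G(u),\deg_G(v)\ge 3$, in which case deleting both (non-adjacent in a tree) yields $c_T = \deg_G(u)+\deg_G(v)-1\ge 5$, so $\h(P_T(G)) = n+2-c_T \le n-3$. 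In either case $\h(J_G)\le n-3$, which completes the contraposition.

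The step I expect to be the main obstacle is the bookkeeping in the ``only if'' direction: one must verify that cases (i) and (ii) genuinely exhaust all non-path trees outside the claimed family, which uses that a tree is triangle-free, so three vertices of degree $\ge 3$ can never be pairwise adjacent and therefore always contain a non-adjacent pair, and one must compute the component counts $c_T$ correctly when two vertices are deleted simultaneously. The ``if'' direction is comparatively routine once Lemma \ref{aci-lemma} is available, its only real content being the identification of $G\setminus e$ as a disjoint union of two paths and the verification that its binomial edge ideal is a complete intersection.
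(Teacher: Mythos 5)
Your proposal is correct and follows essentially the same route as the paper's proof: the ``if'' direction via Theorem \ref{tech-thm}(a), Lemma \ref{radical-lemma} and Lemma \ref{aci-lemma}, and the ``only if'' direction by exhibiting a set $T$ (a vertex of degree $\geq 4$, or two non-adjacent vertices of degree $\geq 3$) with $\h(P_T(G)) \leq n-3$. You are in fact slightly more careful than the paper in two places it leaves implicit: you explicitly rule out the complete intersection alternative in Lemma \ref{aci-lemma} by producing a degree-$3$ vertex, and you justify why the two cases of the contrapositive exhaust all trees outside the claimed family.
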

\begin{proof}
Suppose $G$ is obtained by adding an edge $e$ between paths $P_{n_1}$
and $P_{n_2}$.
Then,  $J_{G \setminus e}$ is a complete intersection ideal and $J_G =
J_{G \setminus e}+f_eS$. By Theorem \ref{tech-thm}(a), and
Lemma \ref{radical-lemma}, we get
$J_{G\setminus e}:f_e^2 =J_{G \setminus e}:f_e$. Therefore, it
follows from Lemma \ref{aci-lemma} that
$J_G$ is an almost complete intersection.  

Now, assume that $G$ is not a graph obtained by adding an edge between
two paths.  Therefore, either there exists a vertex $v$ such that $\deg_G(v)\geq 4$ or there exist $z,w \in
V(G)$ such that $\deg_G(z)\geq 3, \; \deg_G(w) \geq 3$ and $\{z,w\}
\notin E(G)$. Let $T=\{v\}$ in the first case and $T =\{z,w\}$ in
the second case. By Theorem \ref{tech-thm}, $\h (P_T(G)) = n - c_T +
| T |$. Since $z$ and $w$ are of degrees at least $3$, $\{z,w\} \notin E(G)$ and $G$ is a
tree,  $c_T \geq 5$. Hence, $\h
(P_T(G)) \leq n-3$. Now, if $T=\{v\}$, then $c_T\geq 4$ so that $\h
(P_T(G)) \leq n-3$. Note that in both cases $T$ has the cut point
property so that $P_T(G)$ is a
minimal prime, by Theorem \ref{tech-thm}. Thus,    $\h(J_G) \leq n-3$. Since
$\mu(J_G)=n-1$, $\mu(J_G)>\h(J_G)+1$. Hence,   $J_G$ is not an almost
complete intersection ideal.
\end{proof}	
Now, we have characterized the almost complete intersection trees,
we move on to graphs containing cycles.
\begin{theorem}\label{graph-aci}
Let $G$ be a connected graph on $[n]$ which is not a tree. Then,  $J_G$
is an almost complete intersection ideal if and only if $G$ is
obtained by adding an edge between two vertices of a path 
or by attaching a path to each vertex of  $C_3$. 
\end{theorem}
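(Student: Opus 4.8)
The plan is to reduce the almost complete intersection property to a single numerical equality and then do a combinatorial height computation. Since $J_G$ is radical by Theorem \ref{tech-thm}(a) and $S$ is a regular ring, I first observe that the localization condition in the definition of almost complete intersection is automatic: for a minimal prime $\mathfrak p = P_T$ of $J_G$ we have $(J_G)_{\mathfrak p} = \mathfrak p S_{\mathfrak p}$ (the unique minimal prime over $(J_G)_{\mathfrak p}$ is $\mathfrak p S_{\mathfrak p}$, and $(J_G)_{\mathfrak p}$ is radical), which is the maximal ideal of the regular local ring $S_{\mathfrak p}$ and hence a complete intersection. Therefore $J_G$ is an almost complete intersection if and only if $\mu(J_G) = \h(J_G) + 1$. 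Now $\mu(J_G) = |E(G)| = n - 1 + r$, where $r = |E(G)| - n + 1 \geq 1$ is the number of independent cycles of the connected non-tree $G$, and by Theorem \ref{tech-thm}(b),(d) we have $\h(J_G) = n - M$ with $M = \max\{\, c_T - |T| : T = \emptyset \text{ or } T \text{ has the cut point property}\,\}$, where $T=\emptyset$ forces $M \geq 1$. Hence $\mu(J_G) - \h(J_G) - 1 = r + M - 2$, which vanishes exactly when $r = 1$ and $M = 1$. So the statement is equivalent to: a connected non-tree $G$ is an almost complete intersection if and only if it is unicyclic and $\h(J_G) = n-1$ (equivalently $c_T - |T| \leq 1$ for every $T$). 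It then remains to check that this last condition singles out precisely the two families listed.

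For sufficiency I split into the two families. If $G$ is obtained by adding an edge $e$ between two vertices of a path, then $G \setminus e$ is a path, so $J_{G\setminus e}$ is a complete intersection; writing $J_G = J_{G\setminus e} + (f_e)$ and using $J_{G\setminus e}:f_e = J_{G\setminus e}:f_e^2$ (Lemma \ref{radical-lemma}), Lemma \ref{aci-lemma} gives that $J_G$ is a complete intersection or an almost complete intersection, and since $G$ is not a path it is the latter, exactly as in the proof of Theorem \ref{tree-aci}. If instead $G$ is $C_3$ with a path attached at each of its three vertices, then deleting any cycle edge yields a spider rather than a path, so Lemma \ref{aci-lemma} no longer applies; here I verify $c_T - |T| \leq 1$ for all $T$ directly. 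Deleting the vertices of $T$ one at a time, every off-cycle vertex has degree at most $2$ and every cycle vertex has degree at most $3$, so a single deletion changes $c_T - |T|$ by at most $0$. The only way to gain would be to delete a degree-$3$ cycle vertex \emph{after} the cycle has been broken (so that it acts tree-like and splits off three pieces); but in $C_3$ any two cycle vertices are adjacent, so deleting the first one lowers the second one to degree $2$ and forecloses that gain. Hence $M=1$, $\h(J_G)=n-1$, and $J_G$ is an almost complete intersection.

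For necessity, let $G$ be a connected non-tree almost complete intersection; by the reduction it is unicyclic with $c_T-|T|\leq 1$ for all cut-point $T$. I argue the contrapositive. If a unicyclic $G$ is of neither listed form, I claim it contains one of: (i) an off-cycle vertex of degree $\geq 3$, (ii) a cycle vertex of degree $\geq 4$, or (iii) two non-adjacent cycle vertices each carrying a pendant path. Indeed, if (i) and (ii) both fail then every pendant tree is a path and each cycle vertex carries at most one pendant path; if (iii) also fails then the path-bearing cycle vertices are pairwise adjacent, forcing either at most two such vertices (adjacent), which is the first family, or three pairwise-adjacent such vertices, which forces girth $3$ with a path at every vertex, the second family. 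In case (i) or (ii) take $T=\{v\}$, and in case (iii) take $T$ to be the two offending vertices; a direct count gives $c_T-|T|\geq 2$ (for (iii) the non-adjacency keeps both cyclic arcs nonempty, so $c_T = 2+2$). Since $J_G \subseteq P_T$, this yields $\h(J_G)\leq \h(P_T)=n-(c_T-|T|)\leq n-2 < \mu(J_G)-1$, contradicting the almost complete intersection property. The main obstacle throughout is the bookkeeping of $c_T-|T|$: in the sufficiency for the $C_3$ family one must control \emph{arbitrary} mixed $T$ (cycle and off-cycle vertices together) through the peeling argument, and in the necessity one must verify both that ``neither form'' genuinely forces one of (i)--(iii) and that the exhibited $T$ attains $c_T-|T|\geq 2$.
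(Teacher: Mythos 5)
Your proof is correct, and for the hardest part of the sufficiency it takes a genuinely different route from the paper. The necessity argument and the sufficiency for graphs obtained by adding an edge to a path coincide with the paper's (the same choices of $T=\{u\}$ or $T=\{u,v\}$ to force $\h(P_T)\leq n-2$, and the same use of Lemmas \ref{radical-lemma} and \ref{aci-lemma}). The divergence is in the $C_3$-with-three-paths family. The paper first invokes the Cohen--Macaulayness of $S/J_G$ from \cite{her1} to get $\h(J_G)=n-1$, and then spends most of the proof verifying the local complete intersection condition at each minimal prime via the linear syzygies of Theorem \ref{syzygy-unicyclic3}, the bound $\mu((J_G)_{\mathfrak p})\leq n-1$ from $I_1(\varphi)\not\subseteq\mathfrak p$, and Krull's height theorem. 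You bypass all of this with the observation that $J_G$ is radical, so localizing at a minimal prime $P_T$ kills every other minimal component and gives $(J_G)_{P_T}=P_TS_{P_T}$, the maximal ideal of a regular local ring, which is automatically a complete intersection; this reduces the almost complete intersection property to the single equality $\mu(J_G)=\h(J_G)+1$, i.e.\ to $r=1$ and $\max_T(c_T-|T|)=1$. You then get $\h(J_G)=n-1$ by a direct combinatorial peeling argument rather than by citing Cohen--Macaulayness. Both steps are sound, and your route is arguably cleaner and more self-contained (it needs neither the syzygy computations of Section 3 nor the depth result of \cite{her1}); what the paper's approach buys is independence from the explicit primary decomposition bookkeeping, since Cohen--Macaulayness hands them unmixedness for free. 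The only place you should tighten the write-up is the peeling step: removing a degree-$3$ vertex can raise $c_T-|T|$ by one in general, so you should say explicitly that the first cycle vertex of $T$ to be removed still has its two (mutually adjacent) cycle neighbours present, whence its removal creates at most one new component, and that every subsequently removed cycle vertex has degree at most $2$; with that made precise the bound $c_T-|T|\leq 1$ holds for every $T$ in any deletion order.
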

\begin{proof}
First assume that $J_G$ is an almost complete intersection ideal. Therefore, $\mu(J_G)=\h(J_G)+1$. Since
$\h(J_G) \leq n-1$, it follows that $\mu(J_G)\leq
n$. Since $G$ is not a tree, we have $\mu(J_G)=n$. Therefore, $G$ is
a unicyclic graph and $\h(J_G)=n-1$. Let $u$ be a vertex which does
not belong to the unique cycle in $G$. If
$\deg_G(u) \geq 3$, then for $T=\{u\}$, by Theorem
\ref{tech-thm}(d), $P_T(G)$ is a
minimal prime of $J_G$ of height $\leq n-2$ which contradicts the fact
that $\h(J_G)
=n-1.$ Hence, $\deg_G(u) \leq
2$. Now, we claim that $\deg_G(u)\leq 3$, for every $u$
belonging to
vertex set of the unique cycle in $G$. If  $\deg_G(u) \geq 4$ for
such a vertex $u$, then $G \setminus u$ has at least three components
so that for $T=\{u\}$,
$P_T(G)$ is a minimal prime of $J_G$ of height $\leq n-2$ which is a
contradiction. Hence,   $\deg_G(u) \leq 3$. If the girth of $G$ is $3$,
then clearly it belongs to one of the categories described in the
theorem. We
now assume that girth of $G$ is $\geq 4$. Suppose $u,v$ 
be two vertices of the unique cycle in $G$ with $\deg_G(u) =3$ and $\deg_G(v)=3$.
If $\{u,v\} \notin E(G)$, then for $T=\{u,v\}$,
$P_T(G)$ is a minimal prime of $J_G$ of height $\leq n-2$ which is
again a contradiction. Therefore, $\{u, v\} \in E(G)$. Thus, the
number of vertices of the cycle having degree three is at most $2$ and if
two vertices of the cycle have degree three, then they are adjacent.
Therefore, $G$ is obtained by adding an edge between two vertices of a
path. 
		%If girth of $G$ is three, then $G$ is a block graph. Hence,   it follows
		%from 
		%\cite[Theorem 1.1]{her1} that $S/J_G$ is Cohen-Macaulay and that $G$
		%is obtained from a triangle by attaching a path each of vertices
		%of triangle.
		
		Now assume that $G$ is a graph obtained by adding an edge between two
		vertices, say $u$ and $v$, of a path. Let $e = \{u,v\}$.
		Observe that $J_{G \setminus
			e}$ is a complete intersection ideal. By Theorem
			\ref{tech-thm}(a) and Lemma \ref{radical-lemma}, $J_{G\setminus
			e}:f_e^2 =J_{G \setminus e}:f_e$. Thus, it follows from
		Lemma \ref{aci-lemma} that $J_G$ is an almost complete
		intersection ideal.
		
		Now, suppose $G$ is a graph obtained by adding a path to each of the
		vertices of a $C_3$.
		Then, by \cite[Theorem 1.1]{her1}, $S/J_G$ is Cohen-Macaulay of
		dimension $n+1$.
		Therefore, $\h(J_G) =n-1 = \mu(J_G) - 1$. Now, we have to prove that
		if $\mathfrak{p}$ is a minimal prime of $J_G$, then
		$(J_G)_{\mathfrak{p}}$ is a complete intersection ideal of
		$S_{\mathfrak{p}}$, i.e.
		$\mu((J_G)_{\mathfrak{p}})=\h((J_G)_{\mathfrak{p}})=n-1$. Let
		$\mathfrak{p}$ be a minimal prime of $J_G$. It follows from
		\cite[Corollary 3.9]{HH1} that there exists  $T \subset [n]$ having
		cut point property such that
		$\mathfrak{p}=P_T(G)$. By Theorem \ref{betti-unicyclic3},
		the minimal presentation of $J_G$ is
		\[  S(-4)^{\beta_{2,4}(S/J_G)}\oplus S(-3)^{\beta_{2,3}(S/J_G)} \stackrel{\varphi}{\longrightarrow}
		S(-2)^{n}\longrightarrow J_{G}\longrightarrow 0.\] Moreover, 
		the linear relations given in Theorem \ref{syzygy-unicyclic3}(a) show
		that $(x_{v_1},y_{v_1},x_{v_2},y_{v_2},x_{v_3},y_{v_3}) \subset
		I_1(\varphi),$ the ideal generated by the entries of the matrix of
		$\varphi$. 
		Now, if $I_1(\varphi)\subset \mathfrak{p}$, then
		$(x_{v_1},y_{v_1},x_{v_2},y_{v_2},x_{v_3},y_{v_3}) \subset
		\mathfrak{p}$. Thus,    $\{v_1,v_2,v_3\}\subset T$, which is a
		contradiction to the fact that $T$ has the cut point property. Therefore,
		$I_1(\varphi) \not\subset \mathfrak{p}$, and hence, by \cite[Lemma
		1.4.8]{bh}, $\mu((J_G)_{\mathfrak{p}})\leq n-1$.  If
		$\mu((J_G)_{\mathfrak{p}})< n-1$, then by \cite[Theorem 13.5]{hm},
		$\h(\mathfrak{p})<n-1$, which is a contradiction. Thus,
		$\mu((J_G)_{\mathfrak{p}})= n-1$.  Hence, $J_G$ is an almost complete
		intersection ideal.
	\end{proof}
	
	Below, we give representatives of four different types of graphs
	whose binomial edge ideals are almost complete intersection ideals.
	
	\vskip 2mm
	\noindent
	\begin{tikzpicture}[scale=1]
	\draw (-2,-3)-- (-1,-3);
	\draw (-1,-3)-- (0,-3);
	\draw (0,-3)-- (1,-3);
	\draw (0,-3)-- (0,-4);
	\draw (0,-4)-- (-1,-4);
	\draw (-1,-4)-- (-2,-4);
	\draw (0,-4)-- (1,-4);
	\draw (3,-4)-- (4,-4);
	\draw (4,-4)-- (5,-4);
	\draw (4,-4)-- (4,-3);
	\draw (4,-3)-- (4,-2);
	\draw (5,-4)-- (6,-4);
	\draw (8,-3)-- (8,-4);
	\draw (8,-4)-- (9,-4);
	\draw (8,-3)-- (9,-3);
	\draw (9,-3)-- (9,-4);
	\draw (9,-3)-- (9,-2);
	\draw (8,-3)-- (8,-2);
	\draw (8,-2)-- (7,-2);
	\draw (12,-4)-- (11,-4);
	\draw (12,-4)-- (12.44,-3.08);
	\draw (12.44,-3.08)-- (13,-4);
	\draw (12,-4)-- (13,-4);
	\draw (13,-4)-- (14,-4);
	\draw (12.44,-3.08)-- (12.38,-2.04);
	\draw (11,-4)-- (11,-3);
	\begin{scriptsize}
	\fill  (-2,-3) circle (1.5pt);
	\fill  (-1,-3) circle (1.5pt);
	\fill  (0,-3) circle (1.5pt);
	\fill  (1,-3) circle (1.5pt);
	\fill  (0,-4) circle (1.5pt);
	\fill  (-1,-4) circle (1.5pt);
	\fill (-2,-4) circle (1.5pt);
	\fill  (1,-4) circle (1.5pt);
	\fill  (3,-4) circle (1.5pt);
	\fill  (4,-4) circle (1.5pt);
	\fill  (5,-4) circle (1.5pt);
	\fill  (4,-3) circle (1.5pt);
	\fill  (4,-2) circle (1.5pt);
	\fill  (6,-4) circle (1.5pt);
	\fill  (8,-3) circle (1.5pt);
	\fill  (8,-4) circle (1.5pt);
	\fill  (9,-4) circle (1.5pt);
	\fill  (9,-3) circle (1.5pt);
	\fill  (9,-2) circle (1.5pt);
	\fill  (8,-2) circle (1.5pt);
	\fill  (7,-2) circle (1.5pt);
	\fill  (12,-4) circle (1.5pt);
	\fill  (11,-4) circle (1.5pt);
	\fill  (12.44,-3.08) circle (1.5pt);
	\fill  (13,-4) circle (1.5pt);
	\fill (14,-4) circle (1.5pt);
	\fill  (12.38,-2.04) circle (1.5pt);
	\fill  (11,-3) circle (1.5pt);
	\end{scriptsize}
	\end{tikzpicture}
	
	\vskip 2mm
	We now study the Rees algebra of almost complete intersection binomial
	edge ideals. We prove that they are Cohen-Macaulay and we also obtain
	the defining ideals of these Rees algebras. We first recall a result
	which characterizes the Cohen-Macaulayness of the Rees algebra and the
	associated graded ring.
	\begin{theorem}\cite[Corollary 1.8]{Herr}\label{aci-cmrees}
		Let $A$ be a Cohen-Macaulay local (graded) ring and $I \subset A$ be a
		(homogeneous) almost complete intersection ideal in $A$. Then,  
		\begin{enumerate}[(a)]
			\item $\gr_A(I)$ is Cohen-Macaulay if and only if
			$\depth(A/I) \geq \dim(A/I) -1.$
			\item $\R(I)$ is Cohen-Macaulay if and only if $\h(I) > 0$ and
			$\gr_A(I)$ is Cohen-Macaulay.
		\end{enumerate}
	\end{theorem}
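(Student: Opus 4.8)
The plan is to fix the almost complete intersection structure and then run the argument through the theory of blowup algebras of ideals generated by a $d$-sequence. Write $I = J + (a)$ in $A$, where $J = (f_1,\ldots,f_g)$ is generated by a regular sequence with $g = \h(I)$ and $a$ is one extra (homogeneous) element; for part (b) we use $\h(I) = g > 0$. The first point I would establish is that such an $I$ is generated by a $d$-sequence (this is exactly Proposition \ref{d-seq-lemma} in the homogeneous setting, and it is classical for almost complete intersections), so that the approximation-complex machinery of Herzog--Simis--Vasconcelos applies. The payoff is the identification $\gr_A(I) \cong \Sym_{A/I}(I/I^2)$ together with the description of $\R(I)$ as the quotient of $\Sym_A(I)$ by its $A$-torsion; both rings then carry a finite acyclic complex built from the Koszul cycles of $f_1,\ldots,f_g,a$ over the polynomial extension $A[T_1,\ldots,T_{g+1}]$. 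Recording dimensions, $\dim(A/I) = \dim A - g$, $\dim \gr_A(I) = \dim A$ and $\dim \R(I) = \dim A + 1$, so the Cohen--Macaulay targets are $\depth \gr_A(I) = \dim A$ and $\depth \R(I) = \dim A + 1$.

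For part (a) I would read off $\depth \gr_A(I)$ from this acyclic complex. Setting $B = A/I$, the conormal module $I/I^2$ has an explicit $B$-presentation coming from $I = J + (a)$: the regular sequence $J$ contributes a free part, while the extra generator $a$ and the colon ideal $J : a$ control the relations. Since the approximation complex is acyclic (the $d$-sequence hypothesis), a standard depth count along it expresses $\depth \gr_A(I)$ in terms of $\depth(A/I)$ and the Koszul data of the $f_i$; carrying this out shows that $\gr_A(I)$ attains the full depth $\dim A$ exactly when $\depth(A/I) \geq \dim(A/I) - 1$, which is the asserted criterion. The mild deficiency $-1$, rather than $A/I$ itself being Cohen--Macaulay, is precisely the slack created by the single extra generator.

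For part (b) the forward implication is general depth-chasing: from the two fundamental exact sequences $0 \to \R(I)_+ \to \R(I) \to A \to 0$ and $0 \to I\R(I) \to \R(I) \to \gr_A(I) \to 0$, together with the shift isomorphism $I\R(I) \cong \R(I)_+(1)$, one deduces that if $\R(I)$ is Cohen--Macaulay and $\h(I) > 0$ (so that $\operatorname{grade} \R(I)_+ > 0$) then $\gr_A(I)$ is Cohen--Macaulay. For the converse I would invoke the criterion of Trung--Ikeda over a Cohen--Macaulay base: when $\gr_A(I)$ is Cohen--Macaulay and $\h(I) > 0$, the Rees algebra $\R(I)$ is Cohen--Macaulay if and only if $a(\gr_A(I)) < 0$. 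The almost complete intersection content is then that this negativity is automatic: because $\gr_A(I) \cong \Sym_B(I/I^2)$ is standard graded over $B$, generated in degree one with all defining relations linear in the $T_i$, its top local cohomology is concentrated in strictly negative degrees once $\gr_A(I)$ is Cohen--Macaulay, forcing $a(\gr_A(I)) < 0$. Combining this with the forward implication yields the stated equivalence, and then (a) supplies the numerical reformulation of the hypothesis.

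The main obstacle, in both parts, is the homological bookkeeping on the approximation complex: one must verify acyclicity, i.e.\ the $d$-sequence property, in order to legitimately replace $\gr_A(I)$ by $\Sym_B(I/I^2)$ and to have a resolution along which to count depths; and one must pin down both the precise depth of $\gr_A(I)$ and the sign of $a(\gr_A(I))$ from the structure of $I/I^2$ and the colon ideal $J : a$. The second of these is where the real work lies, since the passage from the qualitative statement ``$\gr_A(I)$ is Cohen--Macaulay'' to the quantitative bound $a(\gr_A(I)) < 0$ is exactly the step that distinguishes almost complete intersections from general ideals; the worked warning here is that the symmetric-algebra structure (absence of higher-degree relations) is essential, since a stray quadratic relation could push the $a$-invariant up to $0$. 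Everything else is formal once the $d$-sequence structure is in hand.
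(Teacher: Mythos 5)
First, a point of order: the paper does not prove this statement at all --- it is imported verbatim as \cite[Corollary 1.8]{Herr}, so there is no internal proof to compare yours against. Your chosen route (generation by a $d$-sequence, acyclicity of the approximation complex, $\gr_A(I)\cong \Sym_{A/I}(I/I^2)$, and the Trung--Ikeda $a$-invariant criterion for part (b)) is in fact the route of the cited source, so the strategy is the right one. The problem is that both of the decisive computations are asserted rather than performed, and one of the justifications you do give is false as stated.

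Concretely: for (b), you claim that because $\gr_A(I)\cong\Sym_{A/I}(I/I^2)$ is standard graded, generated in degree one with linear defining relations, Cohen--Macaulayness forces $a(\gr_A(I))<0$. This is not a valid inference. Take $B=k[[t]]$ and $M=B/tB$; then $\Sym_B(M)=B[T]/(tT)$ is a one-dimensional Cohen--Macaulay symmetric algebra, generated in degree one with a single linear relation, and the exact sequence $0\to k[T](-1)\to \Sym_B(M)\to B\to 0$ shows $H^1_{\mathfrak M}(\Sym_B(M))$ is nonzero in degree $0$, i.e.\ $a=0$. What actually forces negativity in the theorem is the hypothesis $\h(I)>0$ (equivalently, the generic rank of $I/I^2$), fed through the two-term approximation complex $0\to H_1\otimes S(-1)\to S\to \gr_A(I)\to 0$ with $S=(A/I)[T_1,\dots,T_{g+1}]$; your argument never uses the height hypothesis at this point, which is exactly where it must enter. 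For (a), the statement ``a standard depth count \dots\ shows that $\gr_A(I)$ attains full depth exactly when $\depth(A/I)\ge\dim(A/I)-1$'' hides the entire content: one needs the identification $H_1(f_1,\dots,f_g,a;A)\cong (J:a)/J$, the linkage relation tying $\depth\bigl((J:a)/J\bigr)$ to $\depth(A/I)$, and, for the ``only if'' direction, a non-vanishing statement for the connecting maps in local cohomology --- none of which appears. Until those two computations are actually carried out (and the $a$-invariant argument repaired to use $\h(I)>0$), this is a plan for a proof rather than a proof.
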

	Therefore, in our situation, to prove that $\R(J_G)$ is
	Cohen-Macaulay, it is enough to prove that $\depth(S/J_G) \geq
	\dim(S/J_G) - 1$.

	\subsection{Discussion}\label{disc}
	Suppose $G$ is a unicyclic graph such that $J_G$ is almost complete
	intersection. We may assume that $G$ is not a cycle.
	If girth of $G$ is $3$, then by Theorem \ref{graph-aci}
	and \cite[Theorem 1.1]{her1}, $S/J_G$ is Cohen-Macaulay. Thus,
	$\gr_{S}(J_G)$ is Cohen-Macaulay, and hence, so is $\R(J_G)$. Now, we
	assume that girth of $G$ is at least $4$ and $n \geq 5$.

\noindent
\begin{minipage}\linewidth
	\begin{minipage}{0.57\linewidth}
		Let $G_1$ and $G_2$ denote 
		graphs on the vertex set $[n]$ with edge sets given by $E(G_1)
		=\{\{1,2\},\{2,3\},\ldots,\{n-2,n-1\}, \{n-1,n\},\{2,n-1\}\}$ and
		$E(G_2)=\{\{1,2\},\{2,3\},\ldots, \{n-1,n\},\{2,n\}\}$. If $G$ is a
		unicyclic graph on $[k]$, $k \geq 5$, which is not a cycle and having
		an almost complete intersection binomial edge ideal, then by
		Theorem \ref{graph-aci}, $G$ is obtained by attaching a path to each
		of the pendant vertices of $G_1$ or $G_2$. 
	\end{minipage}
	\begin{minipage}{0.40\linewidth}
		\hspace*{3mm}\begin{tikzpicture}[scale=0.8]
		\draw  (2,4)-- (2,3);
		\draw  (3,4)-- (3,3);
		\draw  (3,3)-- (4,2);
		\draw  (2,3)-- (1,2);
		\draw  (5,2)-- (6,3);
		\draw  (6,3)-- (6,4);
		\draw  (2,3)-- (3,3);
		\draw  (6,3)-- (7,3);
		\draw  (7,3)-- (8,2);
		\begin{scriptsize}
		\draw [fill=black] (2,4) circle (1pt);
		\draw [fill=black] (2,3) circle (1pt);
		\draw [fill=black] (3,4) circle (1pt);
		\draw [fill=black] (3,3) circle (1pt);
		\draw [fill=black] (4,2) circle (1pt);
		\draw [fill=black] (1,2) circle (1pt);
		\draw [fill=black] (5,2) circle (1pt);
		\draw [fill=black] (6,3) circle (1pt);
		\draw [fill=black] (6,4) circle (1pt);
		\draw [fill=black] (7,3) circle (1pt);
		\draw [fill=black] (8,2) circle (1pt);
		\draw [fill=black] (1,1.5) circle (1pt);
		\draw [fill=black] (1.5,1) circle (1pt);
		\draw [fill=black] (2,0.5) circle (1pt);
		\draw [fill=black] (3,0.5) circle (1pt);
		\draw [fill=black] (3.5,1) circle (1pt);
		\draw [fill=black] (4,1.5) circle (1pt);
		\draw [fill=black] (5,1.5) circle (1pt);
		\draw [fill=black] (5.5,1) circle (1pt);
		\draw [fill=black] (6,0.5) circle (1pt);
		\draw [fill=black] (7,0.5) circle (1pt);
		\draw [fill=black] (7.5,1) circle (1pt);
		\draw [fill=black] (8,1.5) circle (1pt);
		\draw (7.2, 3.1) node {$n$};
		\draw (5.8, 3.1) node {$2$};
		\draw (5.8, 4.1) node {$1$};
		\draw (3.2, 4.1) node {$n$};
		\draw (1.8, 4.1) node {$1$};
		\draw (1.8, 3.1) node {$2$};
		\draw (3.6, 3.1) node {$n-1$};
		\draw (2.5, 0.1) node {$G_1$};
		\draw (6.5, 0.1) node {$G_2$};
		\end{scriptsize}
		\end{tikzpicture}
	\end{minipage}
\end{minipage}

Let $G$ denote the
graph obtained by identifying the vertex $1$ of $G_i$ and a pendant
vertex of $P_m$. Then,  by \cite[Theorem 2.7]{RR14},
$\depth(S/J_G) = \depth(S_i/J_{G_i}) + \depth(S_P/J_{P_m})
-2$, where $S_i$ denotes the polynomial ring corresponding to the
graph $G_i$ and $S_P$ denotes the polynomial ring corresponding to the
graph $P_m$.  Since $J_{P_m}$ is generated by a
regular sequence of length $m-1$, $\depth(S_P/J_{P_m}) = m+1$. Also
$\dim(S/J_G) = n+m$.
Therefore, to prove that $\depth(S/J_G) \geq n+m-1$, it is
enough to prove that $\depth(S_i/J_{G_i}) \geq n$. Similarly, if $G$ is
obtained by attaching a path to each of the pendant vertices of $G_1$,
then to prove $\depth(S/J_G) \geq \dim (S/J_G) - 1$, it is enough to
prove that $\depth(S_1/J_{G_1}) \geq \dim(S_1/J_{G_1}) - 1$. We now
proceed to prove this.

Let $G$ be a graph on $[n]$ with binomial edge ideal $J_G \subset
S = \K[x_1,\ldots, x_n,y_1,\ldots, y_n]$. We
consider $S$ with lexicographical order induced by $x_1 > \cdots > x_n
> y_1 > \cdots > y_n$. It follows from \cite[Theorem
2.1]{HH1} that $\ini_{<}(J_G)$ is a squarefree monomial ideal so that
by
\cite[Corollary 2.7]{CV18}, we get $\depth(S/J_G) =
\depth(S/\ini_<(J_G))$. Hence,   to compute $\depth(S/J_G)$, we compute the
depth of $S/\ini_<(J_G)$.

Now, consider the graphs $G_1$ and $G_2$ as defined above. It follows from the labeling of the
vertices of $G_1$ that the admissible 
paths in $G_1$ are the edges and the paths of the form $i,
i-1,\ldots,3,2,n-1,n-2,\ldots,j$ with  $2\leq j-i\leq n-4$,
\cite[Section 2]{HH1}.
Similarly the admissible paths in $G_2$ are the edges and the paths of
the form $i,i-1,\ldots,3,2,n,n-1,\ldots,j$ with $2 \leq j-i \leq
n-3$. Consequently, the corresponding initial ideals are given
by  
\[
\ini_<(J_{G_1})=
\Big(\{x_1y_2,\ldots,x_{n-1}y_n,x_2y_{n-1},
x_ix_{j+1}\cdots x_{n-1}y_2\cdots y_{i-1}y_j : 2\leq j-i\leq n-4\}
\Big) \text{ and } \]
\[\ini_<(J_{G_2})= \Big(\{x_1y_2,\ldots, x_{n-1}y_n,x_2y_{n},
x_ix_{j+1}\cdots x_{n}y_2\cdots y_{i-1}y_j : 2\leq j-i\leq n-3\}\Big).\]
We denote these  monomials of degree $\geq 3$ by 
$v_1,\ldots, v_p$. We order these monomials such that 
$i<j$ if either $\deg v_i<\deg v_j$ or $\deg v_i=\deg v_j$
and $v_i>_{lex}v_j$.
Set $J=(x_1y_2,\ldots,x_{n-1}y_n)$, $I_0(G_1)=J+(x_2y_{n-1})$,
$I_0(G_2)=J+(x_2y_{n})$ and, for $1\leq k\leq p,$
$I_k(G_i)=I_{k-1}(G_i)+(v_k)$ for $i = 1, 2$. Then
$I_p(G_i)=\ini_<(J_{G_i})$ for $i = 1, 2$. We now compute the
projective dimension, equivalently depth, of these ideals.
	
	\begin{lemma}\label{initial-ideal}
		For $0\leq k\leq p$ and $i = 1, 2$, $\pd (S/I_k(G_i))\leq n$.
	\end{lemma}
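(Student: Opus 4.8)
The plan is to reformulate the statement homologically and then induct on $k$. Since $S$ is a polynomial ring in $2n$ variables, the Auslander--Buchsbaum formula gives $\pd(S/I_k(G_i)) = 2n - \depth(S/I_k(G_i))$, so the desired bound $\pd(S/I_k(G_i)) \le n$ is equivalent to $\depth(S/I_k(G_i)) \ge n$; I will track the projective dimension directly. For the inductive step, since $I_k(G_i) = I_{k-1}(G_i) + (v_k)$, I would use the short exact sequence
\[
0 \longrightarrow \frac{S}{I_{k-1}(G_i):v_k}(-\deg v_k) \stackrel{\cdot v_k}{\longrightarrow} \frac{S}{I_{k-1}(G_i)} \longrightarrow \frac{S}{I_k(G_i)} \longrightarrow 0,
\]
whose long exact sequence in $\Tor$ yields $\pd(S/I_k(G_i)) \le \max\{\pd(S/I_{k-1}(G_i)),\, \pd(S/(I_{k-1}(G_i):v_k)) + 1\}$. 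Assuming inductively that $\pd(S/I_{k-1}(G_i)) \le n$, the whole problem reduces to proving that every colon ideal satisfies $\pd(S/(I_{k-1}(G_i):v_k)) \le n-1$.

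For the base case $k=0$ I would argue directly. The ideal $J = (x_1y_2, \ldots, x_{n-1}y_n)$ is generated by $n-1$ pairwise coprime monomials, hence is a monomial complete intersection with $\pd(S/J) = n-1$. Writing $I_0(G_1) = J + (x_2y_{n-1})$ and using the same short exact sequence with $v = x_2y_{n-1}$, I would compute the colon by the usual monomial rule $x_sy_{s+1} : v = x_sy_{s+1}/\gcd(x_sy_{s+1}, v)$. Only the two generators $x_2y_3$ and $x_{n-2}y_{n-1}$ meet $v$, collapsing to $y_3$ and $x_{n-2}$ respectively, so
\[
J : x_2y_{n-1} = (x_1y_2,\, y_3,\, x_3y_4, \ldots, x_{n-3}y_{n-2},\, x_{n-2},\, x_{n-1}y_n),
\]
which is again a family of $n-1$ pairwise coprime monomials, i.e.\ a complete intersection of projective dimension $n-1$. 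The inequality above then gives $\pd(S/I_0(G_1)) \le \max\{n-1, n\} = n$. The computation for $I_0(G_2) = J + (x_2y_n)$ is identical, with $x_2y_3$ and $x_{n-1}y_n$ collapsing to $y_3$ and $x_{n-1}$.

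The remaining and most delicate part is the inductive step, where I must bound $\pd(S/(I_{k-1}(G_i):v_k)) \le n-1$ for the higher-degree generators $v_k$ arising from the admissible paths. Here I would exploit the explicit monomial description of the $v_k$ together with the chosen degree-then-lex ordering: because $v_k$ has maximal degree among the generators introduced so far and shares large common factors with the edge monomials of $J$ and with the earlier path monomials $v_1, \ldots, v_{k-1}$, colon-ing by $v_k$ forces substantial collapsing. The aim is to show that each such colon ideal, after this collapsing, either is itself a monomial complete intersection of height at most $n-1$ or splits off a single variable, reducing to a monomial ideal in fewer variables whose projective dimension is controlled by a further short exact sequence.

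I expect the \emph{main obstacle} to lie precisely in this combinatorial bookkeeping: for each path monomial $v_k$ and each previously listed generator one must track exactly which variables survive in the colon, and verify in every case that the number of independent monomial generators (equivalently, the dimension after deleting a variable) stays at $n-1$ rather than growing with $k$. It is this uniform-in-$k$ control that makes the bound $\pd \le n$ propagate along the entire filtration $I_0(G_i) \subseteq \cdots \subseteq I_p(G_i) = \ini_<(J_{G_i})$, and hence yields $\depth(S/J_{G_i}) \ge n$ as required for the Cohen--Macaulayness of the Rees algebra.
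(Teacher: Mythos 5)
Your overall strategy is exactly the paper's: induct on $k$, use the short exact sequence obtained by multiplication by $v_k$, and reduce everything to the bound $\pd\bigl(S/(I_{k-1}(G_i):v_k)\bigr)\leq n-1$. Your base case $k=0$ is carried out correctly and agrees with the paper's computation of $J:x_2y_{n-1}$ and $J:x_2y_n$ as complete intersections of length $n-1$.

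However, there is a genuine gap: the inductive step, which is the entire substance of the lemma, is only described and never proved. You correctly identify that one must show each colon ideal $I_{k-1}(G_i):v_k$ has projective dimension at most $n-1$, but you stop at ``the aim is to show'' and ``I expect the main obstacle to lie precisely in this combinatorial bookkeeping'' --- that bookkeeping \emph{is} the proof. The paper fills this in concretely: it classifies the degree-$\geq 3$ generators of $\ini_<(J_{G_1})$ into three explicit families (e.g.\ $v_k=x_2x_{j+1}\cdots x_{n-1}y_j$, $v_k=x_iy_2\cdots y_{i-1}y_{n-1}$, and $v_k=x_ix_{j+1}\cdots x_{n-1}y_2\cdots y_{i-1}y_j$), writes $I_{k-1}(G_1):v_k=(I_0(G_1):v_k)+\bigl((v_1,\ldots,v_{k-1}):v_k\bigr)$, and shows for the first family that $(v_1,\ldots,v_{k-1}):v_k\subseteq (I_0(G_1):v_k)+(y_{j+1})$ while $y_{j+1}v_k\in(v_1,\ldots,v_{k-1})$ --- this is precisely where the degree-then-lex ordering of the $v_i$ is used --- so that the colon collapses to the regular sequence $(x_1y_2,x_3y_4,\ldots,x_{j-2}y_{j-1},x_{j-1},y_3,y_{j+1},\ldots,y_n)$ of length $n-1$; the other families are handled similarly. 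Without verifying this uniform collapse (in particular, that the contribution of the earlier $v_l$'s to the colon is absorbed into a single extra variable, rather than inflating the number of independent generators), the claimed bound $\pd(S/I_k(G_i))\leq n$ does not follow, so as written your argument establishes only the case $k=0$.
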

	\begin{proof}
		We prove the assertion by induction on $k$. If $k=0$, then consider the following exact sequences:
		\[0\longrightarrow
		\frac{S}{J:(x_2y_{n-1})}(-2)\stackrel{\cdot x_2y_{n-1}}{\longrightarrow}\frac{S}{J}\longrightarrow
		\frac{S}{I_0(G_1)}\longrightarrow 0\] and
		\[0\longrightarrow
		\frac{S}{J:(x_2y_n)}(-2)\stackrel{\cdot x_2y_{n}}{\longrightarrow}\frac{S}{J}\longrightarrow
		\frac{S}{I_0(G_2)}\longrightarrow 0.\]
		Note that $J$ is generated by a regular sequence of length $n-1$.
		Moreover 
		\begin{eqnarray*}
			J :x_2y_{n-1} & = &
			(x_1y_2,y_3,x_3y_4,\ldots,x_{n-3}y_{n-2},x_{n-2},x_{n-1}y_n) \text{
				and } \\
			J :x_2y_{n} & = &
			(x_1y_2,y_3,x_3y_4,\ldots,x_{n-3}y_{n-2},x_{n-2}y_{n-1},x_{n-1})
		\end{eqnarray*}
		which are generated by regular sequences of length $n-1$. Therefore
		\[\pd(S/J) = \pd(S/(J:x_2y_{n-1})) =
		\pd(S/(J:x_2y_{n}))=n-1.\] Hence, it follows from the long exact
		sequence of Tor that $\pd(S/I_0(G_i))\leq n$ for $i = 1,2$. 
		Now, assume that $k>0$ and $\pd(S/I_{k-1}(G_i))\leq n$ for $i = 1, 2$.
		For $i = 1, 2$, consider  the short exact sequences  
		\begin{eqnarray}\label{lemma-ses}
		0\longrightarrow \frac{S}{I_{k-1}(G_i):(v_k)}(-\deg
		v_k)\stackrel{\cdot v_k}{\longrightarrow} \frac{S}{I_{k-1}(G_i)}\longrightarrow
		\frac{S}{I_k(G_i)} \longrightarrow 0.
		\end{eqnarray}
		We first prove the assertion for $G_1$.  It can be seen that the
		monomials $v_k$'s are of the form 
		\[v_k= \left\{
		\begin{array}{ll}
		x_2x_{j+1}\cdots x_{n-1}y_j & \text{ for } 4 \leq j \leq n-2, \\
		x_iy_2\cdots y_{i-1}y_{n-1}  & \text{ for } 3 \leq i \leq n-3, \\
		x_ix_{j+1}\cdots x_{n-1}y_2\cdots y_{i-1}y_j & \text{ for } 3 \leq
		i; j \leq n-2 \text{ and } 2 \leq j-i.
		\end{array}\right.\]
		If $v_i = x_2x_{j+1}\cdots x_{n-1}y_j$ for some $4 \leq j \leq n-2$,
		then 
		\begin{eqnarray*}
			I_{k-1}(G_1) : v_k & = & (I_0(G_1) : v_k) + (v_1, \ldots, v_{k-1}) :
			v_k \\
			& = & (x_1y_2,x_3y_4,\dots,x_{j-2}y_{j-1},x_jy_{j+1},
			x_{j-1},y_3,y_{j+2},\dots,y_n) + (v_1, \ldots, v_{k-1}) : v_k. 
		\end{eqnarray*}
		It can be seen that $(v_1, \ldots, v_{k-1}) : v_k
		\subseteq (I_0(G_1) : v_k) + (y_{j+1})$ and $y_{j+1}v_k \in (v_1,
		\ldots, v_{k-1})$. Hence,   
		\[I_{k-1}(G_1) : v_k = (x_1y_2,x_3y_4,\dots,x_{j-2}y_{j-1}, 
		x_{j-1},y_3,y_{j+1},\dots,y_n).
		\]
		This is a regular sequence of length $n-1$. The
		proof that $I_{k-1}(G_1) : v_k$ is generated by a regular sequence of
		length $n-1$ if $v_k$ is of the other two types is similar.
		Therefore
		$\pd(S/(I_{k-1}(G_1) : v_k)) = n-1$. Hence,   it follows from the short
		exact sequence (\ref{lemma-ses}) that $\pd(S/I_k(G_1)) \leq n$. 
		
		In a similar manner, using the short exact sequence (\ref{lemma-ses})
		and the colon ideal, one can prove that $\pd(S/I_k(G_2)) \leq n$.
	\end{proof}

We now show that the associated graded ring and the
Rees algebra of  almost complete intersections
binomial edge ideals  are Cohen-Macaulay.
	
	\begin{theorem}\label{uni-cmrees}
		If  $G$ is a  graph such that $J_G$ is an almost complete intersection ideal, then 		$\gr_S(J_G)$ and $\R(J_G)$ are Cohen-Macaulay.
	\end{theorem}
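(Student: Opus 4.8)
The plan is to reduce everything, via Theorem \ref{aci-cmrees}, to a single depth inequality, and then to verify that inequality family by family using the classification. Since $J_G \neq 0$, we have $\h(J_G) > 0$, so by Theorem \ref{aci-cmrees}(b) it suffices to prove that $\gr_S(J_G)$ is Cohen-Macaulay, and by part (a) this amounts to
\[
\depth(S/J_G) \geq \dim(S/J_G) - 1.
\]
I would now run through Theorems \ref{tree-aci} and \ref{graph-aci}. If $G$ is a unicyclic graph of girth $3$, i.e. a graph obtained by attaching paths to the vertices of $C_3$, then $S/J_G$ is Cohen-Macaulay by \cite[Theorem 1.1]{her1} and there is nothing to prove.

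In each remaining family $G$ is obtained from a small core graph $G_0$ by attaching a path at some of its pendant vertices. A pendant vertex is free (simplicial), and so is the attached endpoint of each path, so the depth formula \cite[Theorem 2.7]{RR14} applies at every step; since the dimension is additive under this gluing as well and $\depth(S/J_{P_m}) = \dim(S/J_{P_m}) = m+1$, each attachment changes $\depth(S/J_G)$ and $\dim(S/J_G)$ by the same amount $m-1$, so the quantity $\depth(S/J_G) - \dim(S/J_G)$ is unchanged. Hence the inequality above is inherited from the core, and it is enough to check $\depth(S_0/J_{G_0}) \geq \dim(S_0/J_{G_0}) - 1$ for the finitely many cores: the graphs $G_1,G_2$ of Subsection \ref{disc} for unicyclic graphs of girth $\geq 4$, the star $K_{1,3}$ for trees with one vertex of degree $3$, and the six-vertex tree $H$ obtained by joining the midpoints of two copies of $P_3$ (equivalently, two adjacent degree-$3$ vertices each carrying two leaves) for trees with two such vertices.

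For $G_1$ and $G_2$ I would pass to the squarefree initial ideal, so that $\depth(S/J_{G_i}) = \depth(S/\ini_<(J_{G_i}))$ by \cite[Corollary 2.7]{CV18}, and then invoke Lemma \ref{initial-ideal} with $k=p$ to obtain $\pd(S/\ini_<(J_{G_i})) \leq n$; the Auslander--Buchsbaum formula then gives $\depth(S_i/J_{G_i}) \geq n = \dim(S_i/J_{G_i})-1$. For $K_{1,3}$ the bound is immediate from $\pd(S/J_{K_{1,3}}) = 3$. For $H$, taking $e$ to be its central edge, the description \cite[Theorem 3.7]{FM} gives $J_{H\setminus e}:f_e = J_{K_3 \sqcup K_3}$, which is Cohen-Macaulay, while $J_{H\setminus e}$ is a complete intersection; applying the depth lemma to the sequence \eqref{main-ses} then yields $\depth(S/J_H) \geq \dim(S/J_H)-1$.

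The main obstacle is the depth bound for the unicyclic cores $G_1$ and $G_2$. Every other ingredient is either a direct citation or a one-line consequence of the gluing formula, whereas controlling $\pd(S/\ini_<(J_{G_i}))$ relies on the delicate inductive argument of Lemma \ref{initial-ideal}, whose heart is showing that each colon ideal $I_{k-1}(G_i):v_k$ is again generated by a regular sequence of length $n-1$.
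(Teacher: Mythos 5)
Your overall strategy is the paper's: reduce via Theorem \ref{aci-cmrees} to the single inequality $\depth(S/J_G)\geq \dim(S/J_G)-1$, dispose of the girth-$3$ unicyclic graphs by Cohen--Macaulayness of $S/J_G$, and reduce the unicyclic graphs of girth $\geq 4$ to the cores $G_1,G_2$ of Discussion \ref{disc} via the gluing formula of \cite[Theorem 2.7]{RR14}, finishing with \cite[Corollary 2.7]{CV18} and Lemma \ref{initial-ideal}. Your treatment of trees is correct but an unnecessary detour: rather than reducing to the cores $K_{1,3}$ and $H$, the paper simply quotes \cite[Theorem 1.1]{her1}, which gives $\depth(S/J_G)=n+1$ for every tree on $[n]$, and combines this with $\h(J_G)=n-2$ (from almost complete intersection plus Theorem \ref{tree-aci}) to get $\dim(S/J_G)=n+2$. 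For the record, your computation for $H$ does check out: $u$ and $v$ lie in different components of $H\setminus e$, so \cite[Theorem 3.7]{FM} gives $J_{H\setminus e}:f_e=J_{K_3\sqcup K_3}$, and the depth lemma applied to \eqref{main-ses} yields $\depth(S/J_H)\geq 7=\dim(S/J_H)-1$.

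The genuine gap is the pure cycle. By Theorem \ref{graph-aci}, the unicyclic graphs with almost complete intersection binomial edge ideal include those obtained by joining the \emph{two endpoints} of a path, i.e.\ $C_n$ itself for every $n\geq 4$. A cycle has no pendant vertices, so it is neither $G_1$ nor $G_2$ nor obtainable from either by attaching paths; your case analysis never reaches it, and Lemma \ref{initial-ideal} says nothing about $\ini_<(J_{C_n})$. This case needs its own input: the paper cites \cite[Theorem 4.5]{Za12}, which gives $\depth(S/J_{C_n})=n=\dim(S/J_{C_n})-1$. Once you add that case (or any direct computation of $\depth(S/J_{C_n})$), the argument is complete.
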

	\begin{proof} Suppose $J_G$ is an almost complete intersection
	  ideal. By Theorem \ref{aci-cmrees}(b), it is enough
		to prove that $\gr_S(J_G)$ is Cohen-Macaulay, if one wants to
		prove that $\R(J_G)$ is Cohen-Macaulay. Now, $\gr_S(J_G)$ is
		Cohen-Macaulay if $\depth(S/J_G) \geq \dim(S/J_G) 
		- 1$, by Theorem \ref{aci-cmrees}(a). If $G$ is a tree, then it follows from \cite[Theorem 1.1]{her1} and Theorem \ref{tree-aci}
		that $\depth (S/J_G)=n+1 = \dim(S/J_G)-1$. If $G = C_n$, then it follows from \cite[Theorem
		4.5]{Za12} that  $\depth(S/J_{C_n}) = \dim(S/J_{C_n})-1$. Now, we assume that $G$ is a unicyclic graph other than cycle.
		  It follows from Discussion \ref{disc} that  it is enough to prove that
		$\depth(S_i/J_{G_i}) \geq n$ for $i = 1,2$.
		From \cite[Corollary 2.7]{CV18}, we get $\depth(S_i/J_{G_i}) =
		\depth(S_i/\ini_>(J_{G_i}))$. It follows from Lemma
		\ref{initial-ideal} that 
		$\depth(S_i/\ini_>(J_{G_i})) = \depth(S_i/I_p(G_i)) \geq n$. This completes the proof.
	\end{proof}
	
	We now study binomial edge ideals which are of linear type.  
	Since complete intersections are of linear type, binomial edge
	ideals of paths are of linear type.
	%Since $J_{P_n}$ is a complete intersection, the defining ideal of the
	%$\R(J_{P_n})$ is generated by all $2\times 2$ minor of the matrix  
	%\[\begin{bmatrix}
	%f_{1,2}&f_{2,3}& \cdots & f_{n-1,n}\\
	%T_{\{1,2\}}&T_{\{2,3\}}& \cdots &T_{\{n-1,n\}}
	%\end{bmatrix}.\]
	%
	Now, we show that the
	$J_{K_{1,n}}$ is of linear type. For this purpose,
	recall the definition of $d$-sequence. 
	\begin{definition}
		Let $A$ be a commutative ring. Set $d_0 = 0$. A sequence of
		elements $d_1 ,\dots ,d_n$ is said to be a $d$-sequence if 
		$(d_0,d_1 ,\dots ,d_i)  : d_{i+1}d_j = (d_0,d_1 ,\dots ,d_i) : d_j$ for all $0 \leq i\leq n-1$ and for all $j\geq i+1$.
	\end{definition}
We refer the reader to
the book \cite{HS} by Swanson and Huneke for more properties of
$d$-sequences.
\begin{proposition} \label{d-seq-star}
The binomial edge ideal of $K_{1,n}$ is of linear type. 
\end{proposition}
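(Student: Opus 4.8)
The plan is to exhibit the natural generators of $J_{K_{1,n}}$ as a $d$-sequence and then invoke the classical fact that an ideal generated by a $d$-sequence is of linear type, \cite{Hu80}. Label $V(K_{1,n}) = \{1,\ldots,n,n+1\}$ with center $n+1$, so that $J_{K_{1,n}} = (g_1,\ldots,g_n)$ where $g_i = f_{i,n+1} = x_iy_{n+1} - x_{n+1}y_i$. Set $I_i = (g_1,\ldots,g_i)$ with $I_0 = 0$. The goal is the $d$-sequence condition $I_i : g_{i+1}g_j = I_i : g_j$ for $0 \leq i \leq n-1$ and $j \geq i+1$, and I would carry this out in the steps below.

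First I would compute the colon ideals $I_i : g_j$ for $j > i$. Here $I_i = J_{K_{1,i}}$, the binomial edge ideal of the star on $\{1,\ldots,i,n+1\}$ viewed inside $S$, and $g_j = f_e$ for the edge $e = \{j,n+1\}$ with $j$ an isolated vertex of the underlying graph of $I_i$. Since $j$ is isolated, there are no admissible paths between $j$ and $n+1$, so no $g_{P,t}$ terms occur, and the filling $(\,\cdot\,)_e$ merely turns $N(n+1) = \{1,\ldots,i\}$ into a clique. Thus \cite[Theorem 3.7]{FM} gives $I_i : g_j = C_i$, where $C_i$ is the binomial edge ideal of the complete graph on $\{1,\ldots,i,n+1\}$; in particular this colon is the same for every $j > i$. (The inclusion $C_i \subseteq I_i : g_j$ can also be seen directly from the identity $f_{k,l}g_j = f_{k,j}g_l - f_{l,j}g_k \in I_i$ for $k,l \leq i$.) Moreover $C_i$ is the ideal of $2\times2$ minors of the generic $2\times(i+1)$ matrix with columns indexed by $\{1,\ldots,i,n+1\}$, hence $C_i$ is a prime ideal.

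Next, using the elementary identity $I_i : (g_{i+1}g_j) = (I_i : g_{i+1}) : g_j$ together with $I_i : g_{i+1} = C_i$ from the first step, the required equality reduces to showing $C_i : g_j = C_i$ for every $j \geq i+1$, that is, that $g_j$ is a nonzerodivisor modulo $C_i$. Since $C_i$ is prime and is generated by degree-two binomials in the variables $x_k,y_k$ with $k \in \{1,\ldots,i,n+1\}$ only, the quotient $S/C_i$ is a polynomial ring in the remaining variables over the domain $D = \K[x_k,y_k : k \leq i \text{ or } k = n+1]/C_i$, and the images $\bar x_{n+1},\bar y_{n+1}$ of $x_{n+1},y_{n+1}$ are nonzero in $D$. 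There the image of $g_j = x_jy_{n+1} - x_{n+1}y_j$ is $\bar y_{n+1}\,x_j - \bar x_{n+1}\,y_j$, a nonzero polynomial in $x_j,y_j$ over a domain, hence a nonzerodivisor. This yields $C_i : g_j = C_i$; the case $i = 0$ is immediate because $S$ is a domain.

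Therefore $g_1,\ldots,g_n$ is a $d$-sequence, and by \cite{Hu80} the ideal $J_{K_{1,n}}$ is of linear type. The step I expect to require the most care is the first one, namely pinning down $I_i : g_j$ exactly; this is where \cite[Theorem 3.7]{FM} and the isolated-vertex observation (forcing the absence of path terms) do the real work. Once the colon is identified as the prime binomial edge ideal of a complete graph, the reduction to a nonzerodivisor statement and its verification are routine.
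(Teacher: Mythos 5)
Your proposal is correct and follows essentially the same route as the paper: both exhibit the natural generators $f_{i,n+1}$ as a $d$-sequence by identifying the colon ideals $(f_{1,n+1},\ldots,f_{i,n+1}):f_{j,n+1}$ with the binomial edge ideal of the complete graph on $\{1,\ldots,i,n+1\}$ via \cite[Theorem 3.7]{FM}, and then invoke Huneke's theorem that $d$-sequence ideals are of linear type. The only (harmless) difference is that you verify $J_{K_{i+1}}:f_{j,n+1}=J_{K_{i+1}}$ by a direct primeness/nonzerodivisor argument, whereas the paper cites \cite[Theorem 3.7]{FM} once more for that step.
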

\begin{proof}
Let $K_{1,n}$ denote the graph on $[n+1]$ with the edge set $\{ \{i,
n+1\} ~ : ~ 1 \leq i \leq n\}$. We claim that $J_{K_{1,n}}$ is
generated by the $d$-sequence $d_1,d_2,\ldots,d_{n}$, where
$d_i=x_iy_{n+1}-x_{n+1}y_i$. Let $1 \leq i \leq n-1$
and $K_{i+1}$ denote the complete graph on the vertex set $\{1,\ldots,i,n+1\}$. Then,  for $j \geq i+1$,
\[(d_0,d_1,\ldots,d_i):d_{i+1}d_j
=((d_0,d_1,\ldots,d_i):d_{i+1}):d_j=J_{K_{i+1}}:d_j=J_{K_{i+1}},\]
also $(d_0,d_1,\ldots,d_i):d_j=J_{K_{i+1}},$ where the last two
equalities follow from \cite[Theorem 3.7]{FM}.
Therefore, $J_{K_{1,n}}$ is generated by a $d$-sequence. Hence, by
\cite[Corollary 5.5.5]{HS}, $J_{K_{1,n}}$ is of linear type.
\end{proof}
We now prove that in the polynomial ring over an infinite field,
almost complete intersection homogeneous ideals are generated by
$d$-sequences. 

\begin{proposition}\label{d-seq-lemma}
If $I \subset A = \K[t_1, \ldots, t_n]$ is a homogeneous almost
complete intersection, where $\K$ is infinite, then $I$ is generated
by a homogeneous $d$-sequence $f_1, \ldots, f_{h+1}$ such that
$f_1, \ldots, f_h$ is a regular sequence, where $h = \h(I)$.
\end{proposition}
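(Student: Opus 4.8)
The plan is to locate a homogeneous regular sequence $f_1,\dots,f_h$ inside $I$, set $a=f_{h+1}$ so that $I=J+(a)$ with $J=(f_1,\dots,f_h)$, and then check the definition of a $d$-sequence directly. The first observation is that a regular sequence of positive-degree homogeneous elements of $A$ is permutable, so for every $j$ with $i+1\le j\le h$ the element $f_j$, and hence any product $f_{i+1}f_j$, is a nonzerodivisor modulo $(f_1,\dots,f_i)$; consequently $(f_1,\dots,f_i):f_{i+1}f_j=(f_1,\dots,f_i)=(f_1,\dots,f_i):f_j$. The same nonzerodivisor property of $f_{i+1}$ handles the mixed case $j=h+1$ for $i\le h-1$: if $g f_{i+1}a\in(f_1,\dots,f_i)$, then $(ga)f_{i+1}\in(f_1,\dots,f_i)$, so $ga\in(f_1,\dots,f_i)$, giving $(f_1,\dots,f_i):f_{i+1}a=(f_1,\dots,f_i):a$. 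Thus every instance of the $d$-sequence condition holds for free except the last one, $i=h$, $j=h+1$, which reads exactly $J:a=J:a^2$. So the whole statement reduces to producing the regular sequence so that $J:a=J:a^2$.

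The choice of $f_1,\dots,f_h$ must be generic, and this is where $\h(I)$ and the almost complete intersection hypothesis enter. Since $A$ is Cohen--Macaulay, $\operatorname{grade}(I)=\h(I)=h$, so $I$ contains homogeneous regular sequences of length $h$. Because $\K$ is infinite, I would take $f_1,\dots,f_h$ to be sufficiently general homogeneous combinations of a minimal generating set of $I$ (general within each degree, omitting one generator slot) so that: (i) they form a regular sequence; (ii) their images are linearly independent in $I/\mathfrak{m}I$, so they extend to a minimal generating set $f_1,\dots,f_h,a$ and $I=J+(a)$ by graded Nakayama; and (iii) for every minimal prime $\mathfrak{p}$ of $I$ with $\h(\mathfrak{p})=h$ one has $J_{\mathfrak{p}}=I_{\mathfrak{p}}$. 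Property (iii) is available precisely because $I$ is an almost complete intersection: at such a $\mathfrak{p}$ the ideal $I_{\mathfrak{p}}$ is a complete intersection of height $h$ in the regular local ring $A_{\mathfrak{p}}$ of dimension $h$, so $\mu(I_{\mathfrak{p}})=h$, and $h$ general elements of $I$ then generate $I_{\mathfrak{p}}$ by Nakayama. As there are only finitely many such primes, one general choice secures (i)--(iii) at once.

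It remains to deduce $J:a=J:a^2$ from (iii). Since $J$ is a complete intersection it is unmixed, so write $J=Q_1\cap\dots\cap Q_r$ with $\mathfrak{p}_i=\sqrt{Q_i}$ all of height $h$. Fix $g$ with $ga^2\in J$; I must show $ga\in Q_i$ for each $i$. If $a\notin\mathfrak{p}_i$, then $a$ is a nonzerodivisor modulo the $\mathfrak{p}_i$-primary ideal $Q_i$, so $ga^2\in Q_i$ forces $g\in Q_i$ and hence $ga\in Q_i$. If $a\in\mathfrak{p}_i$, then $\mathfrak{p}_i\supseteq J+(a)=I$ with $\h(\mathfrak{p}_i)=h=\h(I)$, so $\mathfrak{p}_i$ is a minimal prime of $I$ of height $h$; by (iii), $a\in I_{\mathfrak{p}_i}=J_{\mathfrak{p}_i}$, whence $a\in J_{\mathfrak{p}_i}\cap A=Q_i$ and trivially $ga\in Q_i$. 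Therefore $J:a^2\subseteq J:a$, and the reverse inclusion is immediate, so $J:a=J:a^2$. Together with the first paragraph this shows $f_1,\dots,f_h,a$ is a homogeneous $d$-sequence generating $I$ with $f_1,\dots,f_h$ a regular sequence. The main obstacle is the graded genericity in step (i)--(iii): arranging a homogeneous regular sequence that is simultaneously part of a minimal generating set and generates $I$ locally at the height-$h$ minimal primes. A simple example such as $I=(x^2,xy)\subset\K[x,y]$, where the naive choice $f_1=x^2$ fails $J:a=J:a^2$ while a general $f_1=x(x+y)$ succeeds, shows that genericity is genuinely needed; handling the mixed-degree case requires taking general combinations degree by degree rather than one linear combination, and this is exactly where the infiniteness of $\K$ is used.
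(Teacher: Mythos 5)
Your proof is correct and follows essentially the same route as the paper's: the paper cites \cite[Proposition 5.1]{DK99} for the existence of a homogeneous minimal generating set whose first $h$ elements form a regular sequence generating $I$ locally at the relevant minimal primes, and the proof of \cite[Theorem 4.7]{HMV89} for the identity $J:a=J:a^2$, and your second and third paragraphs reconstruct precisely those two cited arguments (with your first paragraph making explicit the standard reduction of the full $d$-sequence condition to that single colon identity). The only place where you are sketchier than fully rigorous --- arranging the graded genericity of $f_1,\dots,f_h$ degree by degree so that (i)--(iii) hold simultaneously --- is exactly the step the paper outsources to \cite{DK99}, so there is no substantive divergence.
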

\begin{proof}
Since $I$ is an almost complete intersection ideal, by
\cite[Proposition 5.1(i)]{DK99}, there exists a homogeneous system
of generators $\{f_1, \ldots, f_{h+1}\}$ of $I$ such that $f_1,
\ldots, f_h$ is a regular sequence. Let $J = (f_1, \ldots, f_h)$.
Since $A$ is regular, $J$ is unmixed. It follows from
\cite[Proposition 5.1(ii)]{DK99} and the proof of \cite[Theorem
4.7]{HMV89} that $J : f_{h+1} = J : f_{h+1}^2$.  Therefore, $f_1,
\ldots, f_{h+1}$ is a homogeneous $d$-sequence.
\end{proof}

In the above Lemma, the assumption that $\K$ is infinite is required
in Proposition 5.1 of \cite{DK99}. We assume that $\K$ is infinite for
the following result as well. 

\begin{corollary}\label{d-seq-unicyc-tree}
Let $G$ be a graph on $[n]$.
If $J_G$ is an almost complete intersection ideal,
then $J_G$ is generated by a $d$-sequence. In particular, $J_G$ is of
linear type.
\end{corollary}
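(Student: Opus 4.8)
The plan is to recognize that this statement is essentially a direct specialization of Proposition~\ref{d-seq-lemma} to the ambient ring $S = \K[x_1,\ldots,x_n,y_1,\ldots,y_n]$, so almost all of the genuine work has already been carried out upstream. First I would check that $J_G$ satisfies the hypotheses of that proposition. The ring $S$ is a polynomial ring over the (infinite) field $\K$, and $J_G$ is homogeneous, since every generator $f_{i,j} = x_iy_j - x_jy_i$ is homogeneous of degree $2$. The standing assumption of the corollary is precisely that $J_G$ is almost complete intersection. Thus, taking $A = S$ and $I = J_G$, the ideal $J_G$ is a homogeneous almost complete intersection ideal in a polynomial ring over an infinite field.

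With the hypotheses verified, I would apply Proposition~\ref{d-seq-lemma} to conclude that $J_G$ is generated by a homogeneous $d$-sequence $f_1,\ldots,f_{h+1}$, where $h = \h(J_G)$ and $f_1,\ldots,f_h$ is a regular sequence. This establishes the first assertion. For the ``in particular'' clause, I would then invoke the standard fact that an ideal generated by a $d$-sequence is of linear type, namely \cite[Corollary 5.5.5]{HS}; this is the same implication already used to finish Proposition~\ref{d-seq-star}. Applying it to the $d$-sequence produced above yields that $J_G$ is of linear type, completing the proof.

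There is no real obstacle to surmount here: the corollary is a clean bookkeeping consequence, and the substantive content lives in the two results it depends on. The classification in Theorems~\ref{tree-aci} and~\ref{graph-aci} identifies exactly which graphs have almost complete intersection binomial edge ideals, while Proposition~\ref{d-seq-lemma} supplies the abstract commutative-algebra input that any homogeneous almost complete intersection in a polynomial ring over an infinite field is $d$-sequence generated. The only hypothesis one must be careful to carry along is that $\K$ is infinite, as flagged in the remark preceding the statement, since it is needed for the application of \cite[Proposition 5.1]{DK99} inside Proposition~\ref{d-seq-lemma}. Consequently the corollary requires essentially no new argument beyond matching notation and chaining the two cited results.
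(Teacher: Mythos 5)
Your proposal matches the paper's proof: both deduce the first assertion by applying Proposition~\ref{d-seq-lemma} to the homogeneous almost complete intersection $J_G$ in the polynomial ring $S$ over the infinite field $\K$, and both obtain linear type from Huneke's theorem that ideals generated by $d$-sequences are of linear type (the paper cites \cite{Hu80}, you cite the equivalent statement in \cite{HS}). Your care in carrying the infiniteness of $\K$ is exactly the caveat the paper records in the remark preceding the corollary.
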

\begin{proof}
If $J_G$ is an
almost complete intersection, then it follows from Proposition
\ref{d-seq-lemma} that $J_G$ is generated by a $d$-sequence.
The second assertion that $J_G$ is of linear type is a consequence of 
\cite[Theorem 3.1]{Hu80}.
\end{proof}

If $G$ is a tree or a unicyclic graph of girth $\geq 4$ such that $J_G$
is an almost complete intersection, then one can show that the minimal
generators consisting of the binomials corresponding to the edges
of $G$ form a $d$-sequence.
\begin{remark}\label{aci-remark}
{\em
Suppose $G$ is a tree such that $J_G$ is almost complete intersection.
Then, by Theorem \ref{tree-aci}, $G$ is obtained by adding an edge
between two paths, say $P_{n_1}$ and $P_{n_2}$. Let $e$ denote the
edge between $P_{n_1}$ and $P_{n_2}$. Note that $G\setminus e$ is
the disjoint union of two paths. Assume now that $G$ is a
unicyclic graph with unique cycle $C_m$, $m \geq 4$, such that
$J_G$ is almost complete intersection. Then,  by Theorem
\ref{graph-aci},  $G$ is obtained by adding an edge $e$ between
two vertices of a path. Thus, in both the cases,  $J_{G\setminus
e}$ is complete intersection, by \cite[Corollary 1.2]{her1}. Since
$J_{G\setminus e}$ is a radical ideal,   by Lemma
\ref{radical-lemma}, $J_{G\setminus e} : f_e^2 = J_{G\setminus e}:
f_e$. Hence,   $J_G$ is generated by a $d$-sequence. It may also be
observed that we do not require the assumption that $\K$ is infinite
in this case.
}
\end{remark}

If $G$ is obtained by adding a path each to the vertices of a $C_3$,
then, it can be seen that $J_{G \setminus e}$ is not a complete 
intersection for any edge $e \in E(G)$.  Thus, the binomials
corresponding to the edges of $G$ do not form a $d$-sequence with
first $n-1$ of them forming a regular sequence. 
%For example, if $G$ is obtained by attaching an edge to
%each vertex of a $C_3$, then it can be seen (either by direct
%computation or using any commutative algebra package like Macaulay 2,
%\cite{M2}) 
%Let $E(G) = \{f_{e_1},
%\ldots, f_{e_6}\}.$ It can also be observed that for any permutation
%$i_1, \ldots, i_6$ of $[6]$, $(f_{e_{i_1}}, \ldots, f_{e_{i_k}}) :
%f_{e_{i_{k+1}}}f_{e_{i_j}} = (f_{e_{i_1}}, \ldots, f_{e_{i_k}}) :
%f_{e_{i_j}}$ for any $1 \leq j \leq 6$. 
But at the same time, Proposition
\ref{d-seq-lemma} ensures the existence of such a generating set.
%It would be interesting to explicitly obtain such a generating set.
We have not been able to explicitly construct one such.
	
As a consequence of Remark \ref{aci-remark}, we obtain the
defining ideal of the Rees algebra of binomial edge ideals of cycles.
	\begin{corollary}\label{cor-def-eqn} Let $\varphi : S[T_{\{1,n\}},T_{\{i,i+1\}} : i = 1, \ldots,
		n-1] \longrightarrow \R(J_{C_n})$ be the map defined by
		$\varphi(T_{\{i,j\}}) = f_{i,j}t$. 
		The defining ideal of $\R(J_{C_n})$, the kernel of $\varphi$, is minimally generated by 
		\[\left\{f_{i,j}T_{\{k,l\}}-f_{k,l}T_{\{i,j\}}: \{i,j\}\neq \{k,l\} \in
		E(G)\right \}\cup
		\left\{\sum_{k=1}^{n-1} (b_i)_kT_{\{k,k+1\}}-(b_i)_n
		T_{\{1,n\}}:1 \leq i \leq n-1 \right\},\] where $b_i$'s are as
		defined in Theorem \ref{syzygy-cycle}.
	\end{corollary}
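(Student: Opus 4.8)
The plan is to reduce the statement entirely to two facts already established in the paper: that $J_{C_n}$ is of linear type, and that its first syzygy module has been computed explicitly in Theorem \ref{syzygy-cycle} (so throughout I assume $n \geq 4$, as the $b_i$ require). First I would note that $C_n$ is obtained from the path $P_n$ by adding the single edge $\{1,n\}$ joining the two endpoint vertices, so $C_n$ is a unicyclic graph of girth $n \geq 4$ whose binomial edge ideal is almost complete intersection by Theorem \ref{graph-aci}. Hence $C_n$ falls under the situation of Remark \ref{aci-remark}: $J_{C_n}$ is generated by a $d$-sequence, and by Corollary \ref{d-seq-unicyc-tree} it is of linear type.

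Next I would use the linear type property to identify $\ker\varphi$. For an ideal of linear type the canonical surjection $\Sym(J_{C_n}) \twoheadrightarrow \R(J_{C_n})$ is an isomorphism, so $\ker\varphi$ is exactly the defining ideal of the symmetric algebra $\Sym(J_{C_n})$. I would then invoke the standard description recalled in the introduction via \cite{HS}: the defining ideal of the symmetric algebra is generated by the linear forms coming from the columns of a minimal presentation matrix of $J_{C_n}$, that is, by replacing each basis element $e_{\{i,j\}}$ with the variable $T_{\{i,j\}}$ in a generating set of the first syzygy of $J_{C_n}$. Concretely, $\ker\varphi$ is generated in $T$-degree one, and its degree-one component is isomorphic as an $S$-module to the first syzygy module of $J_{C_n}$ under the correspondence $T_{\{i,j\}} \leftrightarrow e_{\{i,j\}}$.

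Finally I would substitute the explicit minimal generators of the first syzygy from Theorem \ref{syzygy-cycle}. Under $e_{\{i,j\}} \mapsto T_{\{i,j\}}$ the relations $f_{k,l}e_{\{i,j\}} - f_{i,j}e_{\{k,l\}}$ become $f_{k,l}T_{\{i,j\}} - f_{i,j}T_{\{k,l\}}$ (which generate the same ideal as the sign-reversed forms in the statement), and the relations $\sum_{k=1}^{n-1}(b_i)_k e_{\{k,k+1\}} - (b_i)_n e_{\{1,n\}}$ become $\sum_{k=1}^{n-1}(b_i)_k T_{\{k,k+1\}} - (b_i)_n T_{\{1,n\}}$, yielding exactly the asserted list. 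For minimality: since $\ker\varphi$ is generated in $T$-degree one, a minimal homogeneous generating set of its degree-one component gives a minimal generating set of the ideal, and that component is the first syzygy module whose minimal generators were produced by Theorem \ref{syzygy-cycle}; hence minimality is inherited.

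The genuine content has already been done (the linear type in Remark \ref{aci-remark}/Corollary \ref{d-seq-unicyc-tree} and the explicit syzygy in Theorem \ref{syzygy-cycle}), so this really is a corollary. The one point I would treat carefully, and which I expect to be the main (and only) obstacle, is the clean identification of $\ker\varphi$ with the $T$-degree-one part of the defining ideal and the transfer of a \emph{minimal} generating set through the isomorphism with the first syzygy module; everything else is a direct substitution.
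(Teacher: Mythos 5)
Your proposal is correct and follows essentially the same route as the paper: linear type of $J_{C_n}$ via Remark \ref{aci-remark}, the identification of the defining ideal of the Rees algebra with the linear forms $TA$ coming from the minimal presentation matrix (the paper cites \cite[Exercise 5.23]{HS} for this), and then direct substitution of the minimal syzygy generators from Theorem \ref{syzygy-cycle}. Your extra care about transferring minimality through the degree-one identification is a point the paper leaves implicit, but it is the same argument.
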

	\begin{proof}
		Let 
		\[S(-4)^{n \choose 2} \oplus S(-n)^{n-1}
		\stackrel{\phi}{\longrightarrow} S(-2)^n \longrightarrow J_{C_n}
		\longrightarrow 0\] 
		be the minimal presentation of $J_{C_n}$ given in the proof of Theorem
		\ref{syzygy-cycle}. Since $J_{C_n}$ is of linear type (Remark
		\ref{aci-remark}), it follows from \cite[Exercise 5.23]{HS}
		that the defining ideal of $\R(J_{C_n})$ is generated by $TA$, where
		$A$ is the matrix of $\phi$ and $T = [T_{\{1,2\}}, \ldots, T_{\{n-1,n\}},
		T_{\{1,n\}}]$. Hence,   the assertion follows directly from Theorem
		\ref{syzygy-cycle}.
	\end{proof}

\begin{remark}\label{rem-def-eqn}{\em
Suppose $G$ is a unicyclic graph of girth $m \geq 4$ or a
tree. If $J_G$ is almost complete intersection, then by Remark 
\ref{aci-remark}, $J_G$ is of linear type. Therefore, as in
Corollary \ref{cor-def-eqn}, we can conclude that the defining ideal
of $\R(J_G)$ is generated by $TA$, where $T$ is the matrix consisting
of variables and $A$ is the matrix of the presentation of $J_G$.
Hence,   we obtain a minimal set of generators for the defining ideal of
$\R(J_G)$ by replacing the $e_{\{i,j\}}$'s by $T_{\{i,j\}}$'s in the list of
generators given in the statements in Theorems \ref{syzygy-tree},
\ref{syzygy-unicyclic}. In a similar manner, using Proposition
\ref{d-seq-star} and using a minimal presentation of $J_{K_{1,n}}$,
one can obtain the minimal generators of the defining ideal of the
Rees algebra, $\R(J_{K_{1,n}})$. If $\K$ is infinite, then one can
derive similar conclusions for unicyclic graphs of girth $3$ as well.
}
\end{remark}

\begin{remark}{\em We have shown that if $G$ is a tree with an almost
complete intersection binomial edge ideal $J_G$, then $J_G$ is of
linear type. It would be interesting to know whether binomial edge
ideals of trees, or more generally all bipartite graphs, are of of
linear type. Here we give  an example to show that $J_G$ need not be
of linear type for all bipartite graphs.

\noindent
\begin{minipage}{\linewidth}
	\begin{minipage}{0.72\linewidth}
		Let $G$ be the graph as given on the right.  Then, it can be seen (for example,
		using Macaulay 2 \cite{M2}) that the defining ideal of $J_G$ is not of linear
		type.  If $\delta : S[T_{\{i,j\}} :
		\{i,j\} \in E(G)] \longrightarrow \R(J_G)$ is the map given by $\delta(T_{\{i,j\}}) =
		f_{i,j}t$, then $x_8T_{\{1,6\}}T_{\{3,4\}} - x_6T_{\{1,8\}}T_{\{3,4\}} +
		x_8T_{\{1,4\}}T_{\{3,6\}} - x_4T_{\{1,8\}}T_{\{3,6\}} - x_6 T_{\{1,4\}}T_{\{3,8\}} +
		x_4T_{\{1,6\}}T_{\{3,8\}}$ is a minimal generator of $\ker (\delta)$.
	\end{minipage}
	\begin{minipage}{.25\linewidth}
		\captionsetup[figure]{labelformat=empty}
		\begin{figure}[H]
			
			\begin{tikzpicture}[scale=1]
			
			\draw (-1,3)-- (-1,1);
			\draw (-1,3)-- (0,1);
			\draw (-1,3)-- (1,1);
			\draw (-1,3)-- (2,1);
			\draw (0,1)-- (0,2.98);
			\draw (0,2.98)-- (1,1);
			\draw (0,2.98)-- (2,1);
			\draw (1,3)-- (1,1);
			\draw (1,3)-- (2,1);
			\draw (2,3)-- (2,1);
			\begin{scriptsize}
			\fill  (-1,3) circle (1.5pt);
			\draw (-0.68,3.12) node {$1$};
			\fill  (-1,1) circle (1.5pt);
			\draw (-1.28,1.04) node {$2$};
			\fill  (0,1) circle (1.5pt);
			\draw (-0.32,1.04) node {$4$};
			\fill  (1,1) circle (1.5pt);
			\draw(0.64,1.04) node {$6$};
			\fill  (2,1) circle (1.5pt);
			\draw (2.32,1.04) node {$8$};
			\fill  (0,2.98) circle (1.5pt);
			\draw (0.3,3.12) node {$3$};
			\fill  (1,3) circle (1.5pt);
			\draw (1.28,3.12) node {$5$};
			\fill (2,3) circle (1.5pt);
			\draw (2.34,3.12) node {$7$};
			\end{scriptsize}
			\end{tikzpicture}
		\end{figure}
	\end{minipage}
\end{minipage}
		}
	\end{remark}
	
	It will be interesting to obtain an answer to:
	\begin{question}
		Classify all bipartite graphs whose binomial edge ideals are of linear
		type.
	\end{question}
	Note that the above bipartite graph is not a tree. 
	We have enough experimental evidence to pose the following
	conjecture:
	\begin{con}
		\begin{enumerate}[(a)]
			\item If $G$ is a tree or a unicyclic graph, then $J_G$ is of linear type.
			\item $\R_s(J_{C_n}) = \R(J_{C_n})$, where $\R_s(J_{C_n})$ denote
			the symbolic Rees algebra of $J_{C_n}$.
		\end{enumerate}
	\end{con}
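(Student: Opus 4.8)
The two parts call for rather different tools, so I would treat them separately.

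For \textbf{(a)}, my plan is an induction on $|V(G)|$ built on the exact sequence (\ref{main-ses}) and the colon computation $J_{G\setminus e}:f_e = J_{(G\setminus e)_e} + (\text{path generators})$ from \cite[Theorem 3.7]{FM}, exactly the engine already driving the syzygy computations of Theorems \ref{syzygy-tree} and \ref{syzygy-unicyclic}. Rather than pushing ``linear type'' directly through the surgery, I would aim to verify a structural criterion that is stable under it: in a Cohen--Macaulay ambient ring, an ideal satisfying the condition $G_\infty$ together with sliding depth is of linear type (Herzog--Simis--Vasconcelos). The condition $G_\infty$ is a local generator bound $\mu((J_G)_{\mathfrak p}) \le \h(\mathfrak p)$ at primes $\mathfrak p \supseteq J_G$, which I would verify from the height formula $\h(P_T(G)) = n + |T| - c_T$ of Theorem \ref{tech-thm}(b) together with the combinatorial description of the minimal primes; for trees and unicyclic graphs the local number of generators should be controlled purely by the cut-set structure. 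The remaining input, sliding depth (or strong Cohen--Macaulayness), I would try to read off the explicit first syzygies already computed, feeding them into the Koszul/approximation complex and proving its acyclicity by the same inductive mapping-cone bookkeeping.

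The \emph{main obstacle} in (a) is precisely this depth hypothesis. Unlike the almost complete intersection case of Theorem \ref{uni-cmrees}, $S/J_G$ is \emph{not} Cohen--Macaulay for a general tree or unicyclic graph, so sliding depth cannot be inherited and must be established by hand. Moreover the counterexample in the closing remark---a bipartite, non-tree graph whose defining ideal $\ker\delta$ carries a genuine degree-two (in the $T_{\{i,j\}}$) minimal generator---shows that any successful argument must exploit the tree/unicyclic structure itself and cannot follow formally from bipartiteness. A fallback, more computational route would be to analyse the Jacobian dual of the presentation matrices of Theorems \ref{syzygy-tree} and \ref{syzygy-unicyclic} and show directly that $\ker\delta$ is generated in $T$-degree one.

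For \textbf{(b)}, observe first that $J_{C_n}$ is itself almost complete intersection, being $P_n$ with the edge $\{1,n\}$ adjoined, so by Theorem \ref{uni-cmrees} the ring $\gr_S(J_{C_n})$ is Cohen--Macaulay and, by Remark \ref{aci-remark}, $J_{C_n}$ is of linear type. Since $\R(J_{C_n}) \subseteq \R_s(J_{C_n})$ always, the assertion is equivalent to $J_{C_n}^{(k)} = J_{C_n}^{k}$ for every $k$, i.e. to the absence of embedded primes in all powers of $J_{C_n}$. My plan is to use $J_{C_n}^{(k)} = \bigcap_{T} P_T(C_n)^{(k)}$, the intersection running over the minimal primes described by the cut point property in Theorem \ref{tech-thm}(d), and to leverage the Cohen--Macaulayness of $\gr_S(J_{C_n})$ to force every associated prime of $S/J_{C_n}^{k}$ to be minimal, so that the symbolic and ordinary filtrations coincide. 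Here the main obstacle is that controlling $\ass(S/J_{C_n}^{k})$ uniformly in $k$ is exactly the asymptotic statement that is hard in general; the Cohen--Macaulayness of the associated graded ring is the key lever, and I would look to results linking $\gr$-Cohen--Macaulayness together with the generic complete intersection property to normal torsion-freeness in order to close the argument.
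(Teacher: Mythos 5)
There is nothing in the paper to compare your argument against: this statement is posed explicitly as a \emph{conjecture}, supported only by experimental evidence, and the authors prove it only in special cases (paths, the almost complete intersection trees and unicyclic graphs of Corollary \ref{d-seq-unicyc-tree} and Remark \ref{aci-remark}, and $K_{1,n}$ in Proposition \ref{d-seq-star}). So the only question is whether your proposal closes the conjecture, and it does not: in both parts the decisive step is exactly the one you yourself flag as an obstacle. For (a), aiming at the Herzog--Simis--Vasconcelos criterion is reasonable, and since $G_\infty$ is in any case \emph{necessary} for linear type, verifying it from the height formula of Theorem \ref{tech-thm}(b) is a sensible consistency check --- but the argument stands or falls on sliding depth, which you never establish. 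There is no reason to expect it to come for free: sliding depth constrains the depths of all Koszul homology modules of a generating set, and for trees the deviation $\mu(J_G)-\h(J_G)$ is unbounded (the proof of Theorem \ref{tree-aci} already gives $\h(J_G)\le n-3$ while $\mu(J_G)=n-1$ as soon as there are two nonadjacent vertices of degree at least $3$), which places these ideals far outside the range where sliding depth is known or inherited through the mapping-cone surgery. Your fallback --- showing directly that $\ker\delta$ is generated in $T$-degree one from the presentation matrices of Theorems \ref{syzygy-tree} and \ref{syzygy-unicyclic} --- is a restatement of the conjecture, not a reduction of it.

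For (b), your reduction is correct and all the inputs do come from the paper: $J_{C_n}$ is almost complete intersection by Theorem \ref{graph-aci}, $\gr_S(J_{C_n})$ and $\R(J_{C_n})$ are Cohen--Macaulay by Theorem \ref{uni-cmrees}, $J_{C_n}$ is of linear type by Remark \ref{aci-remark}, and $\R\subseteq\R_s$ always, so the claim is equivalent to $J_{C_n}^{(k)}=J_{C_n}^k$ for all $k$. The gap is the final step: Cohen--Macaulayness of $\gr_S(J_{C_n})$ does not by itself force $\ass(S/J_{C_n}^k)=\Min(J_{C_n})$. What results in this direction (Eisenbud--Huneke) give is a containment of $\ass(S/J^k)$ in the set of primes $\mathfrak p$ with $\ell(J_{\mathfrak p})\ge\h(\mathfrak p)$; since $J_{C_n}$ is of linear type, $\ell(J_{\mathfrak p})=\mu(J_{\mathfrak p})\le n$, so one is left with ruling out height-$n$ primes at which $J_{C_n}$ needs $n$ generators locally. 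Note also that $J_{C_n}$ is not equidimensional --- the minimal primes $P_T$ with $|T|=2$ already have height $n$ while $P_\emptyset$ has height $n-1$ --- so the candidate embedded primes live at the same height as some minimal ones and cannot be dismissed on dimension grounds. That local computation, prime by prime over the $P_T$ without the cut point property, is where the actual content of part (b) lies, and it is absent from the proposal.
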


	\bibliographystyle{plain}    %% ???
	\bibliography{Reference}
	
\end{document}